\documentclass[12pt,fleqn,twoside]{article}
\usepackage{amssymb,latexsym,amsmath,amsfonts}
\usepackage{graphicx}

    \topmargin -15mm
    \textwidth 160 true mm
    \textheight 240 true mm
    \oddsidemargin 5mm
    \evensidemargin 5mm
    \marginparwidth 19mm
    \advance\textheight by \topskip

    \numberwithin{equation}{section}

    \def\ds{\displaystyle}
    
    \def\Re{{\rm Re \,  }}
    \def\Im{{\rm Im \, }}
    \def\Ai{{\rm Ai  }}
    \def\Bi{{\rm Bi  }}

    \DeclareMathOperator*{\supp}{supp}

    \DeclareMathOperator*{\Tr}{Tr}

    \newtheorem{theorem}{Theorem}[section]
    \newtheorem{lemma}[theorem]{Lemma}
    \newtheorem{corollary}[theorem]{Corollary}
    \newtheorem{proposition}[theorem]{Proposition}
    
    \newtheorem{Definition}[theorem]{Definition}
    
    \newtheorem{Remark}[theorem]{Remark}
    \newenvironment{remark}{\begin{Remark}\rm}{\end{Remark}}
    \newtheorem{Example}[theorem]{Example}

    \newenvironment{proof}%
    {\rm \trivlist \item[\hskip \labelsep{\bf Proof. }]}%
    {\hspace*{\fill}$\Box$\endtrivlist}
    \newenvironment{varproof}%
    {\rm \trivlist \item[\hskip \labelsep{\bf Proof}]}%
    {\hspace*{\fill}$\Box$\endtrivlist}

    \hyphenation{fac-to-ri-za-tion ge-ne-ra-li-zed mat-ching
    cha-rac-te-ris-tic Deift equi-li-brium con-tai-ning or-tho-go-nal}

\begin{document}

\title{Critical edge behavior in unitary random matrix ensembles and the
    thirty fourth Painlev\'e transcendent}
\author{A.R. Its \\
{\em Department of Mathematical Sciences} \\
{\em Indiana University -- Purdue University Indianapolis} \\
{\em Indianapolis IN 46202-3216, U.S.A.} \\
itsa@math.iupui.edu
 \\[15pt]
A.B.J. Kuijlaars and J. \"{O}stensson \\
{\em Department of Mathematics} \\
{\em Katholieke Universiteit Leuven} \\
{\em Celestijnenlaan 200B} \\
{\em 3001 Leuven, Belgium} \\
arno.kuijlaars@wis.kuleuven.be \\
ostensson@wis.kuleuven.be}

\maketitle

\begin{abstract}
We describe a new universality class for unitary invariant
random matrix ensembles. It arises in the double scaling
limit of ensembles of random $n \times n$ Hermitian matrices
$Z_{n,N}^{-1} |\det M|^{2\alpha} e^{-N \Tr V(M)} dM$
with $\alpha > -1/2$, where the factor $|\det M|^{2\alpha}$ induces critical
eigenvalue behavior near the origin.
Under the assumption that the limiting mean eigenvalue density
associated with $V$ is regular, and that the origin is a right endpoint
of its support, we compute the limiting eigenvalue correlation kernel
in the double scaling limit as $n, N \to
\infty$ such that $n^{2/3}(n/N-1) = O(1)$.
We use the Deift-Zhou steepest descent method for the
Riemann-Hilbert problem for polynomials on the line orthogonal
with respect to the weight $|x|^{2\alpha} e^{-NV(x)}$. Our main
attention is on the construction of a local parametrix near the
origin by means of the $\psi$-functions associated with a distinguished
solution of the Painlev\'e XXXIV equation. This solution is related to
a particular solution of the Painlev\'e II equation, which however is different
from the usual Hastings-McLeod solution.
\end{abstract}

\vspace*{1cm}
{\em 2000 Mathematics Subject Classification}:  15A52, 33E17, 34M55

\pagestyle{myheadings}
\thispagestyle{plain}
\markboth{A.R. ITS, A.B.J. KUIJLAARS, and J. \"OSTENSSON}{CRITICAL EDGE BEHAVIOR IN RANDOM MATRIX ENSEMBLES}

\newpage
\section{Introduction and statement of results}

\subsection{Unitary random matrix models}
For $n \in \mathbb N$, $N > 0$, and $\alpha > -1/2$, we consider the unitary
random matrix ensemble
\begin{equation}
\label{randommatrixmodel}
    Z_{n,N}^{-1} |\det M|^{2\alpha} e^{-N \Tr V(M)} \;dM,
\end{equation}
on the space $\mathcal{M}(n)$ of $n\times n$ Hermitian matrices $M$,
where $V$ is real analytic and satisfies
\begin{equation} \label{Vgrowth}
    \lim_{x \to \pm \infty} \frac{V(x)}{\log(x^2+1)} = +\infty.
    \end{equation}
This is a unitary random matrix ensemble in the sense that it is
invariant under conjugation, $M \mapsto U M U^{-1}$, by unitary
matrices $U$. As is well-known \cite{Deift,Mehta}, it induces the
following probability density on the $n$ eigenvalues $x_1, \ldots, x_n$ of $M$
\begin{equation}
P^{(n,N)}(x_1,\ldots,x_n) = \widehat{Z}_{n,N}^{-1} \prod_{j=1}^{n}
|x_j|^{2\alpha} e^{-NV(x_j)} \prod_{i < j} |x_i - x_j|^2.
\end{equation}
The eigenvalue
distribution is determinantal with kernel $K_{n,N}$ built out of the
polynomials $p_{j,N}(x) = \kappa_{j,N}\,x^j + \cdots$, $\kappa_{j,N} > 0$,
orthonormal with
respect to the weight $|x|^{2\alpha} e^{-NV(x)}$ on $\mathbb R$.
Indeed, as shown by Dyson, Gaudin, and Mehta, see e.g.~\cite{Deift,Dyson,Mehta},
for any $m = 1, \ldots, n-1$, the $m$-point correlation function
\begin{equation}
R_m^{(n,N)}(x_1, \ldots, x_m) \equiv \frac{n!}{(n-m)!}
\int_{-\infty}^{\infty} \cdots \int_{-\infty}^{\infty}
P^{(n,N)}(x_1, \ldots, x_n)\,dx_{m+1} \cdots dx_n
\end{equation}
is given by
\begin{equation}
R_m^{(n,N)}(x_1,\ldots,x_m) = \det\left(K_{n,N}(x_i,x_j)\right)_{1 \leq i,j \leq m},
\end{equation}
where
\begin{equation} \label{Christoffel-Darboux-kernel}
    K_{n,N}(x,y) = |x|^{\alpha} |y|^{\alpha} e^{-\frac{1}{2}N(V(x) + V(y))}
    \sum_{j=0}^{n-1} p_{j,N}(x)\,p_{j,N}(y).
\end{equation}

In the limit $n, N \to \infty$, $n/N \to 1$, the global eigenvalue
regime is determined by $V$ as follows. The equilibrium measure
$\mu_V$ for $V$ is the unique minimizer of
\begin{equation} \label{energyfunctional}
    I_V(\mu) = \iint \log \frac{1}{|x-y|} d\mu(x) d\mu(y) +
    \int V(x) d\mu(x) \end{equation}
taken over all Borel probability measures $\mu$ on $\mathbb{R}$.
Since $V$ is real analytic we have that $\mu_V$
is supported on a finite union of disjoint intervals \cite{DKM}, and it has
a density $\rho_V$ such that
\[ \lim_{n, N \to \infty, n/N \to 1} \frac{1}{n} K_{n,N}(x,x)
    = \rho_V(x), \qquad x \neq 0. \]
The limiting mean eigenvalue density is independent
of $\alpha$.

The factor $|\det M|^{2\alpha}$  changes the local eigenvalue behavior
near $0$. This is reflected in the local scaling limits of $K_{n,N}$
around $0$ that do depend on $\alpha$. If $0$ is in the bulk of the
spectrum and $\rho_V(0) > 0$, then instead of the usual sine kernel
we get a Bessel kernel depending on $\alpha$ \cite{KV}. If $0$ is in
the bulk and $\rho_V(0) = \rho_V'(0) = 0$, $\rho_V''(0) > 0$, then the
local scaling limits of the kernel near $0$ are associated with the
Hastings-McLeod solution of the Painlev\'e II equation $q'' = sq +
2q^3 - \alpha$ \cite{CKV}.

In this paper we study the effect of $\alpha$ in case $0$ is an
endpoint of the spectrum which is such that the density $\rho_V$ vanishes
like a square root at $0$.
For $\alpha = 0$ the scaling limit is the well-known Airy kernel,
see the papers \cite{BB,For,Moore,TW} and also \cite{BI1,DG}, and so we are
asking the question: What is the $\alpha$-generalization of the Airy kernel?

For $\alpha > -1/2$, we have found a new one-parameter family of
limiting kernels as stated in Theorem \ref{theorem1} below.
In Theorem \ref{theorem1} we also assume that
the eigenvalue density $\rho_V$ is regular, which means the following.
\begin{itemize}
\item The function $x \mapsto 2 \int \log |x-s| \rho_V(s) ds - V(x)$ defined
for $x \in \mathbb R$, assumes its maximum value only on the support of $\rho_V$.
\item The density $\rho_V$ is positive on the interior of its support.
\item The density $\rho_V$ vanishes like a square root at each of the endpoints
of its support.
\end{itemize}

\begin{theorem} \label{theorem1}
For every $\alpha > -1/2$, there exists a one-parameter family of kernels
$K_{\alpha}^{edge}(x,y;s)$ such that the following holds.
Let $V$ be a real analytic external field on $\mathbb R$ such that its
mean limiting eigenvalue density $\rho_V$ is regular.
Suppose that $0$ is a right endpoint of
the support of $\rho_V$ so that for some constant $c_1 = c_{1,V} >0$
\begin{equation} \label{c1def}
    \rho_V(x) \sim \frac{c_1}{\pi} |x|^{1/2} \qquad \mbox{ as } x \to 0-.
    \end{equation}
Then there exists a second constant $c_2 = c_{2,V} > 0$ such that
\begin{equation} \label{KnNlimit}
    \lim_{n,N \to \infty} \frac{1}{(c_1n)^{2/3}}
    K_{n,N}\left(\frac{x}{(c_1 n)^{2/3}}, \frac{y}{(c_1n)^{2/3}}\right) =
        K_{\alpha}^{edge}(x,y;s) \end{equation}
whenever $n, N \to \infty$ such that
\begin{equation} \label{nNlimit}
    \lim_{n \to \infty} n^{2/3} \left(\frac{n}{N} - 1\right) = L \in \mathbb R
    \end{equation}
and $s = - c_{2,V} L$.
\end{theorem}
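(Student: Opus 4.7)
The plan is to apply the Deift--Zhou nonlinear steepest descent method to the $2\times 2$ Riemann--Hilbert problem for orthogonal polynomials with respect to the weight $w_{n,N}(x) = |x|^{2\alpha}e^{-NV(x)}$ on $\mathbb{R}$. Because the Christoffel--Darboux kernel \eqref{Christoffel-Darboux-kernel} is expressible in terms of the $(1,1)$ and $(2,1)$ entries of the Fokas--Its--Kitaev matrix $Y(z)$, uniform asymptotics for $Y$ in a shrinking neighborhood of $0$ will yield the scaling limit \eqref{KnNlimit} after substituting $x \mapsto x/(c_1 n)^{2/3}$, $y \mapsto y/(c_1 n)^{2/3}$.

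The first transformation $Y \mapsto T$ uses the $g$-function $g(z) = \int \log(z-s)\,d\mu_V(s)$ to normalize behavior at infinity; because $n/N \neq 1$, the phase effectively involves $\frac{n}{N}V$, and the discrepancy $n^{2/3}(n/N-1) \to L$ will propagate through the analysis and ultimately become the parameter $s = -c_{2,V}L$ in the model problem. The second transformation $T \mapsto S$ opens lenses along the support of $\rho_V$, turning the oscillatory jumps into exponentially small ones off the real axis. A global parametrix $P^{(\infty)}$ is then built from the $g$-function together with the Szeg\H{o} function of $|x|^{2\alpha}$. Around each regular endpoint of $\supp\rho_V$ other than $0$, the standard Airy parametrix applies.

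The heart of the proof, and the main obstacle, is the construction of the \emph{local parametrix near the origin}. At $0$ two effects collide: the density vanishes like a square root (endpoint behavior, which for $\alpha=0$ calls for an Airy model) and the weight has the factor $|x|^{2\alpha}$ (which at a hard edge would call for a Bessel model), while the shift $n^{2/3}(n/N-1)\to L$ introduces an extra parameter into the model RHP. I would formulate a $2\times 2$ model Riemann--Hilbert problem $\Psi(\zeta;s)$ in the $\zeta$-plane with jumps on a contour modelled on the Airy jump contour, with a matching condition $\Psi(\zeta;s) \sim \zeta^{-\sigma_3/4}\left(\frac{I+i\sigma_1}{\sqrt{2}}\right)$ as $\zeta\to\infty$, and with a prescribed singular behavior at $\zeta=0$ encoding the exponent $\alpha$. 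One then shows that this model RHP is solved by the $\psi$-functions of a distinguished solution of Painlev\'e~XXXIV, and in particular that this solution exists and is pole-free for all real $s$. The local coordinate $\zeta$ is defined from the $g$-function expansion near $0$; this expansion simultaneously fixes the rescaling constant $c_1$ via \eqref{c1def} and produces $c_{2,V} > 0$ as the coefficient that identifies the model parameter $s$ with $-c_{2,V}L$.

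With all parametrices in hand, matching on the boundary of the disk around $0$ yields a small-norm RHP for $R = SP^{-1}$ with $R = I + O(n^{-1/3})$ uniformly. Unraveling $R \to S \to T \to Y$ and inserting into \eqref{Christoffel-Darboux-kernel} evaluated at the rescaled points, the leading contribution comes from the local parametrix at $0$, and the Christoffel--Darboux identity collapses the sum to an expression in the $\psi$-function entries, producing $K_\alpha^{edge}(x,y;s)$. Verifying that the formal limit is indeed attained uniformly in $x,y$ on compact sets is routine once the local model is established; the real work is in the third paragraph above, especially proving the solvability of the Painlev\'e~XXXIV model for every real $s$ and establishing the precise matching to $P^{(\infty)}$.
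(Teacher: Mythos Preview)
Your outline matches the paper's strategy closely: the Deift--Zhou steepest descent $Y\mapsto T\mapsto S\mapsto R$, a global parametrix with a Szeg\H{o} function for $|x|^{2\alpha}$, Airy parametrices at the other endpoints, a new model RH problem $\Psi_\alpha(\zeta;s)$ near $0$, matching with error $O(n^{-1/3})$, and recovery of the kernel from the local parametrix.

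Two technical points where the paper proceeds differently are worth noting. First, you write the $g$-function with the fixed measure $\mu_V$; the paper instead uses the $t$-dependent equilibrium measure $\mu_t$ for $V_t=V/t$ with $t=n/N$, so that the jump after $Y\mapsto T$ is exactly in the standard form and the parameter $s$ enters only through the conformal map $f_t$ near $0$ (Lemma~\ref{matchinglemma1} then identifies $n^{2/3}f_t(0)\to -c_{2,V}L=s$). Using the fixed $\mu_V$ leaves a residual factor $e^{(n-N)V}$ in the jumps that you would have to absorb separately; this is workable but not what is done here. Second, and more substantively, you propose to establish solvability of the model RHP by invoking the Painlev\'e~XXXIV connection and arguing that the distinguished solution is pole-free on $\mathbb{R}$. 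The paper does the opposite: it proves solvability of the model RHP for all real $s$ directly, via a vanishing lemma (Proposition~\ref{vanishinglemma}) and Fredholm theory (Proposition~\ref{ExistenceProp}), and only afterwards (Section~\ref{section3}, Lemma~\ref{lem:analytic}) deduces from this that the relevant Painlev\'e function has no real poles. Your route risks circularity, since the standard way to show a Painlev\'e transcendent is pole-free on a given set is precisely to show the associated RHP is solvable there; if you intend an independent argument for pole-freeness you should say what it is.
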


For $\alpha = 0$, the limiting kernels reduce to the kernel
\begin{equation} \label{K0}
K_0^{edge}(x,y;s) = \frac{\Ai(x+s) \Ai'(y+s)-\Ai'(x+s)\Ai(y+s)}{x-y},
\end{equation}
which is the (shifted) Airy kernel from random matrix theory mentioned above,
see also Subsection \ref{conclusion1} below.
For $\alpha \neq 0$, a new type of special functions is needed to describe
the limiting kernel $K_{\alpha}^{edge}(x,y;s)$. This description is given in
the next subsections.

\subsection{The model RH problem} \label{modelRHP1}
We describe $K_{\alpha}^{edge}(x,y;s)$
through the solution of a special Riemann-Hilbert (RH) problem,
that we will refer to as the model RH problem.

The model RH problem is posed on a contour $\Sigma$ in an auxiliary
$\zeta$-plane, consisting of four rays $\Sigma_1 = \{ \arg \zeta =
0\}$, $\Sigma_2 = \{ \arg \zeta = 2\pi/3\}$, $\Sigma_3 = \{ \arg
\zeta = \pi\}$, and $\Sigma_4 = \{ \arg \zeta = -2\pi/3\}$ with
orientation as shown in Figure \ref{figure1}. As usual in RH
problems, the orientation defines a $+$ and a $-$ side on each part
of the contour, where the $+$-side is on the left when traversing
the contour according to its orientation. For a function $f$ on
$\mathbb C \setminus \Sigma$, we use $f_{\pm}$ to denote its
limiting values on $\Sigma$ taken from the $\pm$-side, provided such
limiting values exist.
The contour $\Sigma$ divides the
complex plane into four sectors $\Omega_j$ also shown in the figure.

\begin{figure}[t]
\ifx\JPicScale\undefined\def\JPicScale{1}\fi
\unitlength \JPicScale mm
\begin{picture}(80,50)(-40,15)
\linethickness{0.3mm} \put(10,40){\line(1,0){70}}
\linethickness{0.3mm}
\multiput(20,15)(0.12,0.12){208}{\line(1,0){0.12}}
\put(65,42.5){\makebox(0,0)[cc]{$\Sigma_1$}}

\put(35,55){\makebox(0,0)[cc]{$\Sigma_2$}}

\put(25,42.5){\makebox(0,0)[cc]{$\Sigma_3$}}

\put(35,25){\makebox(0,0)[cc]{$\Sigma_4$}}

\put(55,55){\makebox(0,0)[cc]{$\Omega_1$}}

\put(20,50){\makebox(0,0)[cc]{$\Omega_2$}}

\put(20,30){\makebox(0,0)[cc]{$\Omega_3$}}

\put(55,25){\makebox(0,0)[cc]{$\Omega_4$}}

\put(45,37.5){\makebox(0,0)[cc]{$0$}}


\thicklines \put(64,40){\line(1,0){1}}
\put(65,40){\vector(1,0){0.12}} \thicklines
\multiput(20,65)(0.12,-0.12){208}{\line(1,0){0.12}} \thicklines
\put(24,40){\line(1,0){1}} \put(25,40){\vector(1,0){0.12}}
\thicklines \multiput(32,53)(0.12,-0.12){8}{\line(1,0){0.12}}
\put(33,52){\vector(1,-1){0.12}} \thicklines
\multiput(32,27)(0.12,0.12){8}{\line(1,0){0.12}}
\put(33,28){\vector(1,1){0.12}}




\linethickness{0.3mm}
\multiput(48.06,40.5)(0.06,-0.5){1}{\line(0,-1){0.5}}
\multiput(47.93,40.99)(0.13,-0.49){1}{\line(0,-1){0.49}}
\multiput(47.73,41.46)(0.1,-0.23){2}{\line(0,-1){0.23}}
\multiput(47.46,41.89)(0.14,-0.21){2}{\line(0,-1){0.21}}
\multiput(47.13,42.27)(0.11,-0.13){3}{\line(0,-1){0.13}}
\multiput(46.74,42.6)(0.13,-0.11){3}{\line(1,0){0.13}}
\multiput(46.31,42.88)(0.21,-0.14){2}{\line(1,0){0.21}}
\multiput(45.85,43.08)(0.23,-0.1){2}{\line(1,0){0.23}}
\multiput(45.36,43.22)(0.49,-0.14){1}{\line(1,0){0.49}}
\multiput(44.86,43.28)(0.5,-0.06){1}{\line(1,0){0.5}}
\multiput(44.35,43.27)(0.51,0.01){1}{\line(1,0){0.51}}
\multiput(43.85,43.18)(0.5,0.09){1}{\line(1,0){0.5}}
\multiput(43.37,43.02)(0.48,0.16){1}{\line(1,0){0.48}}
\multiput(42.91,42.79)(0.23,0.11){2}{\line(1,0){0.23}}
\multiput(42.5,42.5)(0.21,0.15){2}{\line(1,0){0.21}}

\put(47.75,45.5){\makebox(0,0)[cc]{$2\pi / 3$}}
\end{picture}
\caption{\label{figure1}Contour for the model RH problem.}
\end{figure}
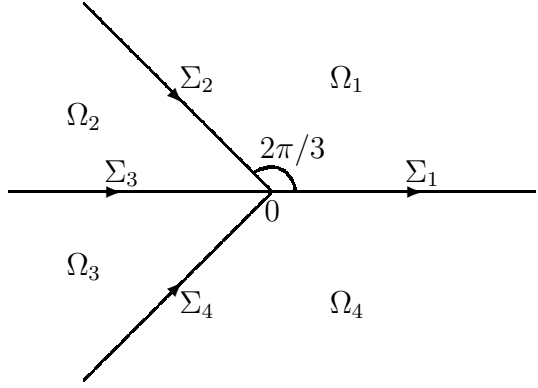

The model RH problem reads as follows.
\paragraph{Riemann-Hilbert problem for $\Psi_{\alpha}$}
\begin{enumerate}
\item[\rm (a)] $\Psi_{\alpha} : \mathbb{C} \setminus \Sigma  \to
    \mathbb C^{2\times 2}$ is analytic.
\item[\rm (b)] $\Psi_{\alpha,+}(\zeta) = \Psi_{\alpha,-}(\zeta)
    \begin{pmatrix} 1 & 1 \\ 0 & 1 \end{pmatrix}$,
    for $\zeta \in \Sigma_1$,

    $\Psi_{\alpha,+}(\zeta) = \Psi_{\alpha,-}(\zeta)
    \begin{pmatrix} 1 & 0 \\ e^{2 \alpha \pi i} & 1 \end{pmatrix}$,
    for $\zeta \in \Sigma_2$,

    $\Psi_{\alpha,+}(\zeta) = \Psi_{\alpha,-}(\zeta)
    \begin{pmatrix} 0 & 1 \\ -1 & 0  \end{pmatrix}$,
    for $\zeta \in \Sigma_3$,

    $\Psi_{\alpha,+}(\zeta) = \Psi_{\alpha,-}(\zeta)
    \begin{pmatrix} 1 & 0 \\ e^{-2\alpha \pi i} & 1 \end{pmatrix}$,
    for $\zeta \in \Sigma_4$.
\item[\rm (c)] $\Psi_{\alpha}(\zeta) = \zeta^{-\sigma_3/4}
    \frac{1}{\sqrt{2}} \begin{pmatrix} 1 & i \\ i & 1 \end{pmatrix}
      (I + O(1/\zeta^{1/2})) e^{-(\frac{2}{3} \zeta^{3/2} + s \zeta^{1/2})\sigma_3}$
    as $\zeta \to \infty$.
    Here $\sigma_3 = \left(\begin{smallmatrix} 1 & 0 \\ 0 & -1 \end{smallmatrix}\right)$
    is the third Pauli matrix.
\item[\rm (d)] $\Psi_{\alpha}(\zeta) = O\begin{pmatrix} |\zeta|^{\alpha} & |\zeta|^{\alpha} \\
    |\zeta|^{\alpha} & |\zeta|^{\alpha}
\end{pmatrix}$ as $\zeta \to 0$, if $\alpha < 0$; and

    $\Psi_{\alpha}(\zeta) = \left\{ \begin{array}{ll}
    O\begin{pmatrix} |\zeta|^{\alpha} & |\zeta|^{-\alpha} \\
    |\zeta|^{\alpha} & |\zeta|^{-\alpha} \end{pmatrix}
    & \mbox{as $\zeta \to 0$ with $\zeta \in \Omega_1 \cup \Omega_4$}, \\[10pt]
    O\begin{pmatrix} |\zeta|^{-\alpha} & |\zeta|^{-\alpha} \\
    |\zeta|^{-\alpha} & |\zeta|^{-\alpha} \end{pmatrix}
    & \mbox{as $\zeta \to 0$ with $\zeta \in \Omega_2 \cup \Omega_3$},
    \end{array} \right.$
    if $\alpha \geq 0$.
\end{enumerate}
Here, and in what follows, the $O$-terms are taken entrywise.
Note that the RH problem depends on a parameter $s$ through the asymptotic condition
at infinity. If we want to emphasize the dependence on $s$ we will write
$\Psi_{\alpha}(\zeta;s)$ instead of $\Psi_{\alpha}(\zeta)$.

The model RH problem is not uniquely solvable.
Indeed, if $\Psi_{\alpha}$ is a solution, then
$\left(\begin{smallmatrix} 1 & 0 \\ \eta & 1 \end{smallmatrix} \right) \Psi_{\alpha}$
is also a solution for any $\eta = \eta(s)$, and it turns out that
this is the only freedom we have (see Proposition \ref{nonunique}).

\begin{theorem} \label{theorem2}
The model RH problem is solvable for every $s \in \mathbb R$.
Let $\Psi_{\alpha}$ be a solution of the model RH problem
and put
\begin{equation} \label{psi12def}
    \begin{pmatrix} \psi_1(x;s) \\[5pt] \psi_2(x;s) \end{pmatrix}
    = \left\{ \begin{array}{ll}
        \Psi_{\alpha,+}(x;s) \begin{pmatrix} 1 \\ 0 \end{pmatrix},
        & \quad \mbox{ for } x > 0, \\
        \Psi_{\alpha,+}(x;s) e^{-\alpha \pi i \sigma_3}
        \begin{pmatrix} 1 \\ 1 \end{pmatrix},
        & \quad \mbox{ for } x < 0.
        \end{array} \right.
        \end{equation}
Then the limiting kernel $K_{\alpha}^{edge}(x,y;s)$
can be written in the ``integrable form''
\begin{equation}
\label{Kintform}
K_{\alpha}^{edge}(x,y;s)
     = \frac{\psi_2(x;s) \psi_1(y;s) - \psi_1(x;s) \psi_2(y;s)}{2\pi i (x-y)}.
\end{equation}
\end{theorem}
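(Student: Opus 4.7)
The plan is to treat the two assertions of Theorem \ref{theorem2} separately: solvability of the model RH problem for every real $s$, and the integrable representation (\ref{Kintform}) of $K_\alpha^{edge}$.

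For solvability I would construct $\Psi_\alpha$ explicitly as the $\psi$-function of an isomonodromic $2\times 2$ Lax pair whose compatibility condition is the Painlev\'e XXXIV equation (equivalently, a Painlev\'e II with parameter $\alpha$), as indicated in the abstract. The jump matrices in (b) have to be identified as the Stokes multipliers of a linear ODE with an irregular singular point of rank $3/2$ at $\infty$ and a regular singular point at $0$ of characteristic exponents $\pm\alpha$, and the asymptotics in (c) identify $s$ with the isomonodromic deformation parameter. Once this is set up, solvability at a given $s$ is equivalent to pole-freeness of the corresponding Painlev\'e transcendent. To prove pole-freeness for every real $s$ I would apply a vanishing lemma directly to the RH problem: by deforming the contour so that the jumps become Schwarz-conjugate (using $(e^{2\alpha\pi i})^* = e^{-2\alpha\pi i}$ on $\Sigma_2,\Sigma_4$ and the antidiagonal jump on $\Sigma_3$), any $L^2$ solution of the homogeneous problem yields an entire scalar whose decay at $\infty$ (from (c)) and integrability at $0$ (from (d), using $\alpha > -1/2$) force it to vanish identically; existence then follows from the standard equivalence with solvability of the associated singular integral equation.

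For the integrable form, I would combine the Christoffel--Darboux representation
\[
K_{n,N}(x,y) = \frac{|x|^\alpha |y|^\alpha e^{-\frac{N}{2}(V(x)+V(y))}}{2\pi i (x-y)}
\begin{pmatrix} 0 & 1 \end{pmatrix} Y_+^{-1}(y)\, Y_+(x) \begin{pmatrix} 1 \\ 0 \end{pmatrix}
\]
(where $Y$ is the Fokas--Its--Kitaev RH matrix for the orthogonal polynomials with respect to $|x|^{2\alpha}e^{-NV(x)}$) with the Deift--Zhou steepest descent analysis underlying Theorem \ref{theorem1}. After the usual transformations $Y \mapsto T \mapsto S \mapsto R$, with $R \to I$ and the local parametrix near the origin built from $\Psi_\alpha$, the rescaled vector $Y_+(x)\bigl(\begin{smallmatrix}1\\0\end{smallmatrix}\bigr)$ reduces in the limit to precisely the expression in (\ref{psi12def}): on $(0,\infty)$ directly as $\Psi_{\alpha,+}(x;s)(1,0)^T$, and on $(-\infty,0)$ to $\Psi_{\alpha,+}(x;s)\, e^{-\alpha\pi i\sigma_3}(1,1)^T$, the latter because the opening of lenses to the left of $0$ together with the Stokes jump on $\Sigma_2$ contributes exactly the unimodular-times-$e^{-\alpha\pi i\sigma_3}$ factor. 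Inserting these identifications into the Christoffel--Darboux formula gives (\ref{Kintform}).

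I expect the main obstacle to be establishing pole-freeness of the relevant Painlev\'e transcendent for all real $s$. Since this solution is \emph{not} of Hastings--McLeod type, the classical connection formulae and monotonicity arguments are unavailable, and the vanishing lemma must be set up carefully to accommodate the $\alpha$-dependent, sign-dependent endpoint behavior prescribed by (d)---in particular the asymmetry between the sectors $\Omega_1\cup\Omega_4$ and $\Omega_2\cup\Omega_3$ when $\alpha\geq 0$, which is what prevents the pairing $H(\zeta)H(\bar\zeta)^*$ from being automatically integrable at $0$ and has to be compensated by a preliminary lower-triangular conjugation.
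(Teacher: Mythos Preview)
Your proposal is correct and follows essentially the same route as the paper: solvability is obtained from a vanishing lemma (exploiting Schwarz symmetry of the jumps and the $\alpha>-1/2$ integrability at the origin) combined with the Fredholm theory of the associated singular integral equation, and the integrable form \eqref{Kintform} is extracted from the Christoffel--Darboux representation of $K_{n,N}$ after unfolding the Deift--Zhou transformations, with the local parametrix at $0$ built from $\Psi_\alpha$ producing exactly the vectors in \eqref{psi12def}. The only structural difference is that the paper carries out the solvability proof entirely within the RH framework without invoking the Lax pair or Painlev\'e transcendent---that identification is deferred to the proofs of Theorems~\ref{theorem3} and~\ref{theorem4}---so your opening step of constructing $\Psi_\alpha$ isomonodromically is not needed here; your anticipated obstacle about the sector-asymmetric behavior at $0$ is handled in the paper via an explicit factorization $\Psi_\alpha(\zeta)=E(\zeta)\zeta^{\alpha\sigma_3}A_j$ with $E$ analytic and $(A_1)_{21}=(A_4)_{21}=0$, which is precisely the ``preliminary conjugation'' you allude to.
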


The function $\psi_2$ depends
on the particular choice of solution $\Psi_{\alpha}$ to the model RH problem.
Indeed, for any $\eta$ we have that the mapping $\Psi_{\alpha} \mapsto
\left(\begin{smallmatrix} 1 & 0 \\ \eta & 1 \end{smallmatrix} \right) \Psi_{\alpha}$ leaves
$\psi_1$ invariant and changes $\psi_2$ to $\psi_2 + \eta \psi_1$.
However, this does not change the expression \eqref{Kintform}
for the kernel $K_{\alpha}^{edge}(x,y;s)$.

It follows from \eqref{psi12def} and part (c) of the model RH problem that
$\psi_1$ and $\psi_2$ have the  asymptotic behavior
\begin{alignat}{2}
\label{asympt1}
    \psi_1(x;s) & =
    \frac{1}{\sqrt{2} x^{1/4}} e^{-\frac{2}{3} x^{3/2} - s x^{1/2}}( 1 + O(x^{-1/2})), \\
    \psi_2(x;s) & =
    \frac{i x^{1/4}}{\sqrt{2}} e^{-\frac{2}{3} x^{3/2} - s x^{1/2}} (1 + O(x^{-1/2})),
\end{alignat}
as $x \to +\infty$, and
\begin{alignat}{2}
    \psi_1(x;s) & =
    \sqrt{2} |x|^{-1/4} \cos\left(\frac{2}{3} |x|^{3/2} - s |x|^{1/2} - \alpha \pi  - \pi/4\right) + O(x^{-3/4}), \\
\label{asympt4}
    \psi_2(x;s) & =
    -i \sqrt{2} |x|^{1/4} \sin\left(\frac{2}{3} |x|^{3/2} - s|x|^{1/2} - \alpha \pi  - \pi/4\right) + O(x^{-1/4}),
\end{alignat}
as $x \to -\infty$.

\begin{remark}
The kernel $K_{\alpha}^{edge}(x,y;s)$ describes an edge effect for
the random matrix ensemble \eqref{randommatrixmodel}. If we assume
that $0$ is the rightmost point in the support of $\rho_V$, and if
given $M$ we let $\lambda_{\max}(M)$ denote its largest eigenvalue,
then it follows under the assumptions of Theorem \ref{theorem1}, in
particular the limit assumption \eqref{nNlimit}, that
\begin{equation} \label{largesteigenvalue}
\lim_{n,N \to \infty} \mathbb P
    \left( (c_1n)^{2/3} \lambda_{\max} \leq t \right)
        = \det \left(1 - \left.K_{\alpha,s}\right|_{(t,\infty)} \right),
\end{equation}
where $\left.K_{\alpha,s}\right|_{(t,\infty)}$ is the trace class operator in
$L^2(t,\infty)$ with kernel $K_{\alpha}^{edge}(x,y;s)$.
To prove \eqref{largesteigenvalue} one must show that the
operator with kernel $\frac{1}{(c_1n)^{2/3}} K_{n,N}\left(\frac{x}{(c_1n)^{2/3}},
\frac{y}{(c_1n)^{2/3}}\right)$ converges in the trace class norm on
$L^2(t,\infty)$ to the operator with kernel $K_{\alpha}^{edge}(x,y;s)$.
This requires good estimates on the rate of convergence in
\eqref{KnNlimit}, which can be established as in \cite{DG}.

For $\alpha = 0$, the kernel is the (shifted) Airy kernel, and the
Fredholm determinant \eqref{largesteigenvalue} has an equivalent
expression in terms of a special solution of the Painlev\'e II
equation. The resulting distribution is the famous Tracy-Widom
distribution \cite{TW,TW2}. It would be very interesting to find an
analogous expression for general $\alpha$. The connection to the
model RH problem given in Theorem \ref{theorem2} can be used in
obtaining such an expression, following the approach of \cite{BD}
and \cite{HI}.
We are planning to address this question in a future publication.
\end{remark}

\subsection{Connection with the Painlev\'e XXXIV equation}

The model RH problem is related to a special solution of
the equation number XXXIV from the list of Painlev\'e and Gambier \cite{Ince},
\begin{equation}
\label{painleve34}
    u'' = 4 u^2 + 2s u + \frac{(u')^2 - (2\alpha)^2}{2u}.
\end{equation}
All solutions of \eqref{painleve34} are meromorphic in the complex plane.

\begin{theorem} \label{theorem3}
Let $\Psi_{\alpha}(\zeta;s)$ be a solution of the model RH problem.
Then
\begin{equation} \label{usolution}
    u(s) = -\frac{s}{2} - i \frac{d}{ds}
     \lim_{\zeta \to \infty} \left[ \zeta
    \left(\Psi_{\alpha}(\zeta;s) e^{\left(\frac{2}{3}\zeta^{3/2} + s \zeta^{1/2}\right)\sigma_3}
        \frac{1}{\sqrt{2}} \begin{pmatrix} 1 & - i \\ - i & 1 \end{pmatrix}
        \zeta^{\sigma_3/4} \right)_{12} \right]
\end{equation}
exists and satisfies \eqref{painleve34}.
The function \eqref{usolution} is a global solution
of \eqref{painleve34} (i.e., it does not have poles on the
real line), and it does not depend
on the particular solution of the model RH problem.
\end{theorem}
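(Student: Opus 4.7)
The plan is to use the Lax-pair / isomonodromy method in its standard Riemann-Hilbert incarnation. First introduce the logarithmic derivatives
\begin{equation*}
A(\zeta,s) = \partial_\zeta \Psi_\alpha \cdot \Psi_\alpha^{-1}, \qquad B(\zeta,s) = \partial_s \Psi_\alpha \cdot \Psi_\alpha^{-1}.
\end{equation*}
Because every jump matrix in part (b) of the model RH problem is constant in $\zeta$ and $s$, the matrices $A$ and $B$ have no jumps across $\Sigma$ and therefore extend analytically to single-valued meromorphic functions on $\mathbb{C}$ whose only possible singularities lie at $\zeta = 0$ and $\zeta = \infty$. Inserting the large-$\zeta$ expansion from condition (c), and using the local behavior at the origin in part (d), one pins down their rational structure: $A$ turns out to be a first-degree polynomial in $\zeta$ plus a simple pole at $0$ with residue controlled by $\alpha$, while $B$ is of a similar but simpler shape. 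Both are expressible through the leading coefficients $m_1(s), m_2(s)$ in the expansion
\begin{equation*}
\Psi_\alpha \sim \zeta^{-\sigma_3/4} U \Bigl( I + \tfrac{m_1(s)}{\zeta^{1/2}} + \tfrac{m_2(s)}{\zeta} + \cdots\Bigr) e^{-\theta(\zeta,s)\sigma_3},
\end{equation*}
with $U = \tfrac{1}{\sqrt{2}}\bigl(\begin{smallmatrix} 1 & i \\ i & 1\end{smallmatrix}\bigr)$ and $\theta = \tfrac{2}{3}\zeta^{3/2} + s\zeta^{1/2}$.

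Second, the compatibility condition $\partial_s A - \partial_\zeta B + [A,B] = 0$ reduces to a closed finite system of ODEs in $s$ for the entries of $m_1$ and $m_2$. By an elementary elimination, in the spirit of the Flaschka-Newell derivation of Painlev\'e II from the Airy-type RH problem, the system collapses to a single scalar second-order equation. A short computation involving the dressing factor appearing in \eqref{usolution} identifies that scalar: one checks that
\begin{equation*}
\zeta \, \bigl(\Psi_\alpha(\zeta;s)\, e^{\theta\sigma_3} U^{-1} \zeta^{\sigma_3/4}\bigr)_{12} \; \longrightarrow \; \bigl(U m_1(s) U^{-1}\bigr)_{12} \quad \text{as } \zeta \to \infty,
\end{equation*}
so that the bracketed limit in \eqref{usolution} equals $(U m_1 U^{-1})_{12}$, and hence $u(s) = -s/2 - i\,\partial_s (U m_1 U^{-1})_{12}$. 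Verifying that this particular scalar satisfies \eqref{painleve34} is then an algebraic consequence of the ODE system extracted from the Lax pair.

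Third, invariance under the gauge freedom $\Psi_\alpha \mapsto \bigl(\begin{smallmatrix} 1 & 0 \\ \eta(s) & 1\end{smallmatrix}\bigr)\Psi_\alpha$ is immediate: left multiplication by this matrix preserves the first row of $\Psi_\alpha$, and the $(1,2)$-entry of the product entering \eqref{usolution} depends only on that first row. Pole-freeness of $u$ on the real line then follows from Theorem \ref{theorem2}: wherever the model RH problem is solvable, the coefficients $m_k(s)$ depend analytically on $s$ by standard Deift-Zhou arguments applied to the small-norm equivalent problem, and the gauge-invariant combination extracted in \eqref{usolution} inherits that analyticity on all of $\mathbb{R}$.

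The principal obstacle, I expect, is the first step: determining the precise rational structure of $A$ and $B$ and extracting the correct scalar combination of the entries of $m_1(s)$ that satisfies Painlev\'e XXXIV. The asymptotic series proceeds in half-integer powers of $\zeta$ and is conjugated by $U$ and by $\zeta^{\pm\sigma_3/4}$, so some careful matrix bookkeeping and use of the identity $U \sigma_3 U^{-1} = \bigl(\begin{smallmatrix} 0 & -i \\ i & 0 \end{smallmatrix}\bigr)$ are needed before a single scalar equation emerges. Once the Lax pair is in hand, the reduction to \eqref{painleve34} and the gauge-invariance and regularity assertions follow with comparatively little additional work.
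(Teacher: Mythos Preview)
Your proposal is a valid alternative to the paper's proof, and the strategy is sound: the jump matrices are indeed constant in both $\zeta$ and $s$, so $A$ and $B$ are entire in $\zeta$ except for the isolated singularities at $0$ and $\infty$; the local factorization of Proposition~\ref{Psiat0Prop} shows that $A$ has at most a simple pole at the origin, and the asymptotics (c) control the behavior at infinity. Your gauge-invariance argument (only the first row of $\Psi_\alpha$ enters the $(1,2)$-entry) is correct and clean, and your regularity argument via solvability of the model problem for all real $s$ is exactly what underlies the paper's reasoning as well.

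The paper, however, takes a genuinely different route. Rather than deriving the Lax pair for $\Psi_\alpha$ from scratch, it first constructs an explicit change of variables (Proposition~\ref{Prop:PsiFNtoPsialpha}) mapping $\Psi_\alpha$ to a solution $\Psi_{2\alpha+1/2}^{FN}$ of the Flaschka--Newell RH problem for Painlev\'e II with parameter $\nu=2\alpha+1/2$ and specific Stokes multipliers $a_1=e^{-\nu\pi i}$, $a_2=-i$, $a_3=-e^{\nu\pi i}$. The known Lax pair \eqref{FlaschkaNewellSystem1}--\eqref{FlaschkaNewellSystem2} is then transported through this map (Lemma~\ref{LaxPairP34}), and $u$ is identified as $u(s)=2^{-1/3}U(-2^{1/3}s)$ with $U=q^2+q'+s/2$; the Painlev\'e XXXIV equation for $U$ is obtained directly from Painlev\'e II. Pole-freeness is argued via the identity $U=-(H-q)'+s/2$, with analyticity of $H-q$ read off from \eqref{theorem3formula2}.

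What each approach buys: your direct method is self-contained and avoids the somewhat delicate monodromy-matching in Proposition~\ref{Prop:PsiFNtoPsialpha} and Lemma~\ref{E2E3lemma}. The paper's detour through Painlev\'e II, on the other hand, immediately connects $u$ to an explicitly identified Painlev\'e II transcendent $q$, which is essential for the asymptotic characterization of $u$ in Section~\ref{conclusion3} (Proposition~\ref{uasymptotics}) and for the explicit computations in the special cases $\alpha=0,1$. In short, your route proves the theorem more economically; the paper's route proves it while simultaneously setting up the machinery needed for the rest of Section~\ref{section4}.
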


The connection with the Painlev\'e XXXIV equation leads
to the following characterization of
$\psi_1$ and $\psi_2$.

\begin{theorem} \label{theorem4}
Let $u$ be the solution of Painlev\'e XXXIV
given by \eqref{usolution}.
Then there exists a solution $\Psi_{\alpha}$ of the model RH problem
so that the functions $\psi_1$ and $\psi_2$ defined by \eqref{psi12def} satisfy the following
system of linear differential equations
\begin{equation}
\label{psisystem}
    \frac{d}{dx} \begin{pmatrix} \psi_1(x;s) \\ \psi_2(x;s) \end{pmatrix} =
    \begin{pmatrix} u'/(2x) & i-iu/x \\
    -i(x+s+u + ((u')^2-(2\alpha)^2)/(4ux)) & - u'/(2x)
    \end{pmatrix}
    \begin{pmatrix} \psi_1(x;s) \\ \psi_2(x;s) \end{pmatrix}
\end{equation}
and have asymptotics \eqref{asympt1}--\eqref{asympt4}.
\end{theorem}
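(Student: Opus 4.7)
The plan is to recognize the linear system \eqref{psisystem} as the Lax companion of the isomonodromic deformation associated to the model RH problem, and to derive the asymptotics \eqref{asympt1}--\eqref{asympt4} directly from condition (c) via \eqref{psi12def}. Fix any solution $\Psi_\alpha$ of the model RH problem. Because every jump matrix in (b) is constant in $\zeta$, the logarithmic derivative
\begin{equation*}
U(\zeta;s) := \frac{\partial \Psi_\alpha(\zeta;s)}{\partial \zeta}\,\Psi_\alpha(\zeta;s)^{-1}
\end{equation*}
has no jumps across $\Sigma$ and extends to a meromorphic function on $\mathbb{C}$ whose only possible singularities are at $0$ and $\infty$. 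Writing $\Psi_\alpha(\zeta) = E(\zeta)\,\zeta^{\alpha\sigma_3} C_k$ sectorwise near $0$ (allowed by (d), with $E$ holomorphic and invertible and $C_k$ piecewise constant) shows that $U$ has at most a simple pole at $\zeta=0$ with trace-free residue whose eigenvalues are $\pm\alpha$; condition (c) forces $U$ to be polynomial of degree one at $\infty$. Hence
\begin{equation*}
U(\zeta;s) = U_1(s)\,\zeta + U_0(s) + U_{-1}(s)/\zeta.
\end{equation*}

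To pin down $U_1, U_0$, expand the right-hand side of (c) as
\begin{equation*}
\Psi_\alpha(\zeta;s) = \zeta^{-\sigma_3/4} G\bigl(I + H_1(s)\zeta^{-1/2} + H_2(s)\zeta^{-1} + \cdots\bigr) e^{-\bigl(\frac{2}{3}\zeta^{3/2} + s\zeta^{1/2}\bigr)\sigma_3},
\end{equation*}
with $G = \tfrac{1}{\sqrt{2}}\bigl(\begin{smallmatrix}1 & i \\ i & 1\end{smallmatrix}\bigr)$, so that $G\sigma_3 G^{-1} = \sigma_2$. Passing to $w = \zeta^{1/2}$ to obtain integer powers and matching orders in $w$ gives $U_1 = \bigl(\begin{smallmatrix}0 & 0 \\ -i & 0\end{smallmatrix}\bigr)$ and expresses $U_0$ in terms of $H_1(s)$ and $s$. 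The one-parameter freedom $\Psi_\alpha \mapsto \bigl(\begin{smallmatrix}1 & 0 \\ \eta(s) & 1\end{smallmatrix}\bigr)\Psi_\alpha$ acts on $U$ by the same constant conjugation; I use it to normalize the diagonal entries of $U_0$ to zero. A short computation then identifies $(U_0)_{12} = i$ and $(U_0)_{21} = -i(s + u(s))$, with $u(s)$ exactly the function defined by \eqref{usolution}.

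For $U_{-1}$ three ingredients suffice. (i) $\det \Psi_\alpha$ is constant, since each jump in (b) has determinant $1$ and the asymptotic prefactor has constant determinant; hence $\operatorname{tr} U \equiv 0$ and in particular $\operatorname{tr} U_{-1} = 0$. (ii) The local form $\Psi_\alpha \sim E\,\zeta^{\alpha\sigma_3} C_k$ forces the residue at $0$ to be conjugate to $\alpha\sigma_3$, so $\det U_{-1} = -\alpha^2$. (iii) Continuing the asymptotic calculation of the previous paragraph to the next order in $w = \zeta^{1/2}$, using the same normalization, gives $(U_{-1})_{12} = -iu$. Writing $(U_{-1})_{11} = v(s)/2$, the determinant condition (ii) then yields
\begin{equation*}
(U_{-1})_{21} = -i\,\frac{v(s)^2 - (2\alpha)^2}{4\,u(s)}.
\end{equation*}
The remaining identification $v = u'$ is obtained from the zero-curvature equation $\partial_s U - \partial_\zeta V + [U,V] = 0$, where $V(\zeta;s) := (\partial_s \Psi_\alpha)\Psi_\alpha^{-1}$. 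By the same reasoning $V$ has no jumps; since the factor $\zeta^{\alpha\sigma_3}$ in the local form at $0$ is $s$-independent, $V$ is entire in $\zeta$ and polynomial of degree one, $V = V_1 \zeta + V_0$. The $\zeta^{-1}$ coefficient of the zero-curvature equation reads $\partial_s U_{-1} = [V_0, U_{-1}]$; reading off the $(1,2)$-entry and using $(U_{-1})_{12} = -iu$ gives $\partial_s(-iu) = -iv$, i.e.\ $v = u'$. This step, together with the simultaneous bookkeeping of the RH-freedom normalization in both $U$ and $V$, is where the main technical work lies; the remaining matrix matching is essentially mechanical.

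With $U$ in this form, system \eqref{psisystem} is automatic: by \eqref{psi12def}, $\binom{\psi_1}{\psi_2}$ is a constant linear combination of the columns of $\Psi_{\alpha,+}(\cdot;s)$ on each of $\mathbb{R}_+$ and $\mathbb{R}_-$, and therefore satisfies the same first-order ODE, namely $\tfrac{d}{dx}\binom{\psi_1}{\psi_2} = U(x;s)\binom{\psi_1}{\psi_2}$. For the asymptotics, substitute the expansion in (c) into \eqref{psi12def}: for $x \to +\infty$ one obtains the first column of $x^{-\sigma_3/4} G\, e^{-(\frac{2}{3} x^{3/2} + s x^{1/2})\sigma_3}$ up to $O(x^{-1/2})$, which is \eqref{asympt1} and its companion for $\psi_2$. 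For $x \to -\infty$, use $\zeta^{1/2} = i|x|^{1/2}$ and $\zeta^{3/2} = -i|x|^{3/2}$ on the upper edge of $\Sigma_3$ together with the column $e^{-\alpha\pi i\sigma_3}\binom{1}{1}$; the resulting combination of complex exponentials assembles into the cosine and sine of $\tfrac{2}{3}|x|^{3/2} - s|x|^{1/2} - \alpha\pi - \pi/4$ appearing in \eqref{asympt4} and its companion for $\psi_1$.
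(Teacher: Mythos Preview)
Your approach is correct and takes a genuinely different route from the paper.

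The paper never derives the Lax matrix $A_0$ directly from the model RH problem. Instead, it first establishes an explicit substitution (Proposition~\ref{Prop:PsiFNtoPsialpha}) expressing $\Psi_\alpha$ in terms of the Flaschka--Newell solution $\Psi^{FN}_{2\alpha+1/2}$ for Painlev\'e II with specific Stokes multipliers; it then transports the known Painlev\'e II Lax pair \eqref{FlaschkaNewellSystem1}--\eqref{FlaschkaNewellSystem2} through this substitution (Lemma~\ref{LaxPairP34}), and finally makes the concrete gauge choice $\eta_0(s)=i2^{1/3}q(-2^{1/3}s)$ to reduce the coefficient matrix to the form \eqref{A0def2}. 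The function $u$ arises as $u(s)=2^{-1/3}U(-2^{1/3}s)$ with $U=q^2+q'+s/2$, and its identification with \eqref{usolution} comes from the explicit asymptotic computation \eqref{theorem3formula2}. Your argument is the standard ``constant jumps $\Rightarrow$ rational logarithmic derivative'' isomonodromy derivation carried out directly on the model problem, using the $\eta$--freedom to kill the diagonal of $U_0$ and the zero--curvature relation to pin down $v=u'$. This is more self-contained (no Painlev\'e II input), whereas the paper's route is computationally lighter because the Lax pair is imported rather than rediscovered, and the analyticity of $u$ on $\mathbb R$ (needed since $q$ has real poles) comes for free from Lemma~\ref{lem:analytic}.

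Two points in your write-up deserve a little more care, though neither is fatal. First, your local form $\Psi_\alpha = E(\zeta)\zeta^{\alpha\sigma_3}C_k$ fails when $\alpha-\tfrac12\in\mathbb N_0$; the paper's Proposition~\ref{Psiat0Prop} shows that a logarithm appears in that case. You should note that the extra term contributes $\frac{1}{\pi}\zeta^{2\alpha-1}$ in the $(1,2)$ slot of the residue computation, which is regular since $2\alpha-1\ge 0$, so the conclusion $\det U_{-1}=-\alpha^2$ survives. Second, you assert both $(U_0)_{21}=-i(s+u)$ and $(U_{-1})_{12}=-iu$ with the \emph{same} $u$ given by \eqref{usolution}; these are two separate coefficient identifications coming from different orders of the expansion, and their consistency (and the match with the $s$-derivative in \eqref{usolution}) is exactly what the zero-curvature bookkeeping must deliver. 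You flag this as ``the main technical work,'' which is honest, but be aware that this is where the paper's detour through Painlev\'e II pays off: the identity $u=2^{-1/3}(q^2+q'+s/2)(-2^{1/3}s)$ makes all three identifications simultaneous and transparent.
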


In fact we will prove that for $\zeta \in \mathbb C \setminus \Sigma$,
\begin{equation}
\label{psisystem2}
    \frac{d}{d\zeta} \Psi_{\alpha}(\zeta;s) =
    \begin{pmatrix} u'/(2\zeta) & i-iu/\zeta \\
    -i(\zeta+s+u + ((u')^2-(2\alpha)^2)/(4u\zeta)) & - u'/(2\zeta)
    \end{pmatrix}
        \Psi_{\alpha}(\zeta;s)
\end{equation}
from which \eqref{psisystem} readily follows in view of \eqref{psi12def}.
We emphasize that \eqref{psisystem} and \eqref{psisystem2} hold
for one particular solution of the model RH problem.
Any other solution $\left(\begin{smallmatrix} 1 & 0 \\ \eta & 0 \end{smallmatrix}\right)
\Psi_{\alpha}(\zeta;s)$ also satisfies a system of linear
differential equations, but with matrix
\begin{equation}
    \begin{pmatrix} 1 & 0 \\ \eta & 1 \end{pmatrix}
    \begin{pmatrix} u'/(2\zeta) & i-iu/\zeta \\
    -i(\zeta+s+u + ((u')^2-(2\alpha)^2)/(4u\zeta)) & - u'/(2\zeta)
    \end{pmatrix}
    \begin{pmatrix} 1 & 0 \\ -\eta & 1 \end{pmatrix}.
\end{equation}

In order to make Theorem \ref{theorem4} a genuine, i.e., independent of
the $\Psi_{\alpha}$ RH problem, characterization of $\psi_1$ and $\psi_2$,
we need an independent of formula \eqref{usolution} characterization of the solution
$u(s)$ of equation \eqref{painleve34}. This can be achieved by indicating the
asymptotic behavior of $u(s)$ as $s \to \infty$, cf.\ the characterization of the
Hastings-McLeod solution of Painlev\'e II equation \cite{HMcL}. We discuss this
issue in detail in the last section of the paper, see in particular Proposition
\ref{uasymptotics} and the end of Remark \ref{rem:uasymptotics} where the possible asymptotic characterizations
of the solution $u(s)$ are given.

\subsection{Overview of the rest of the paper}
In Section \ref{section2} we give the proofs of Theorem \ref{theorem1} and
Theorem \ref{theorem2}. We start by presenting the RH problem for orthogonal
polynomials on the line \cite{FIK}. The eigenvalue
correlation kernel $K_{n,N}$ can be explicitly expressed in terms of the
solution of this RH problem \cite{Deift,DKMVZ1}. As in earlier papers,
see e.g.\ \cite{BI1,BI2,CK,CKV,DKMVZ2,DKMVZ1}, we apply the
Deift-Zhou steepest descent method for RH problems, see \cite{DeiftZhou}.
For the local analysis near $0$, we need the model RH problem
for $\Psi_{\alpha}(\zeta;s)$ as introduced in Subsection \ref{modelRHP1}.
We show, following the methodology of \cite{FZ},
that the model RH problem has a solution for every $s \in \mathbb R$.
Then we follow the usual steps in the steepest descent analysis for
RH problems,
which lead us to the proofs of Theorem \ref{theorem1} and \ref{theorem2}.

Section \ref{section3} is devoted to the proofs of Theorem \ref{theorem3}
and Theorem \ref{theorem4}. We start by discussing the RH problem,
in the form due to Flaschka and Newell,
associated with the Painlev\'e II equation $q'' = sq+2q^3-\nu$.
Following \cite{BBIK}, we show that for a special
choice of monodromy data the Flaschka-Newell RH problem is related
to the model RH problem. The parameters in the Painlev\'e equations
are related by $\nu = 2\alpha + 1/2$. The monodromy data corresponds to
a solution of Painlev\'e II which is different from the Hastings-McLeod
solution that has appeared more often in random matrix theory \cite{BI2,CK,CKV,HMcL,TW}.
The known results (asymptotics, Lax pair etc.) for the RH problem for
Painlev\'e II are then transferred to the model RH problem, and then used
to complete the proofs of Theorems \ref{theorem3} and \ref{theorem4}.
In particular it gives rise to the special solution $u$ of the Painlev\'e
XXXIV equation defined by \eqref{usolution}.

In Section \ref{section4} we make some concluding remarks.
For the important special cases $\alpha = 0$ and $\alpha = 1$, we show how
the model RH problem can be explicitly solved in terms of Airy functions,
and how the limiting kernel $K_{\alpha}^{edge}(x,y;s)$ as well as the
special Painlev\'e XXXIV solution $u$ can be
explicitly computed in both cases. Our final remarks concern the characterization
of $u$, in the case of general $\alpha$, through its asymptotic behavior at infinity.

\section{Proof of Theorem \ref{theorem1} and \ref{theorem2}}
\label{section2}
\subsection{The Riemann-Hilbert problem for orthogonal polynomials}
\label{FIKRHproblem}
The RH problem for orthogonal polynomials on the line,
for our particular weight, is the following (cf.\ \cite{FIK}).
\paragraph{Riemann-Hilbert problem for $Y$}
\begin{itemize}
\item $Y : \mathbb C \setminus \mathbb R \to
    \mathbb C^{2\times 2}$ is analytic.
\item $Y_+(x) = Y_-(x)
    \begin{pmatrix} 1 & |x|^{2\alpha} e^{-NV(x)} \\ 0 & 1 \end{pmatrix}$
    for $x \in \mathbb R \setminus \{0\}$, with $\mathbb R$ oriented from left to right.
\item $Y(z) = (I + O(1/z)) \begin{pmatrix} z^n & 0 \\ 0 & z^{-n} \end{pmatrix}$
    as $z \to \infty$.
\item If $\alpha < 0$, then $Y(z) =
O\left(\begin{smallmatrix} 1 & |z|^{2\alpha} \\ 1 & |z|^{2\alpha}
\end{smallmatrix}\right)$ as $z \to 0$. If $\alpha \geq 0$, then
$Y(z) = O\left(\begin{smallmatrix} 1 & 1 \\ 1 & 1
\end{smallmatrix}\right)$ as $z \to 0$.
\end{itemize}

The RH problem has the unique solution
\begin{equation} \label{RHsolution}
Y(z) = \begin{pmatrix}
    \ds \frac{1}{\kappa_{n,N}}\,p_{n,N}(z) &
    \ds \frac{1}{2\pi i\kappa_{n,N}}\,  \int_{\mathbb R} \frac{p_{n,N}(s) |s|^{2\alpha} e^{-NV(s)}}{s-z} ds \\[10pt]
    -2\pi i\,\kappa_{n-1,N}\,p_{n-1,N}(z) &
    \ds -\kappa_{n-1,N} \int_{\mathbb R} \frac{p_{n-1,N}(s) |s|^{2\alpha} e^{-NV(s)}}{s-z} ds
  \end{pmatrix},
\end{equation}
where $p_{j,N}(x) = \kappa_{j,N}\,x^j + \cdots$ is the orthonormal polynomial with respect to the weight
$|x|^{2\alpha} e^{-NV(x)}$.
By \eqref{Christoffel-Darboux-kernel} and the Christoffel-Darboux formula for
orthogonal polynomials, we have
\begin{equation}
    K_{n,N}(x,y) = |x|^{\alpha} |y|^{\alpha} e^{-\frac{1}{2}N(V(x)+V(y))}
\frac{\kappa_{n-1,N}}{\kappa_{n,N}}\,\frac{p_{n,N}(x)\,p_{n-1,N}(y)
- p_{n-1,N}(x)\,p_{n,N}(y)}{x-y}.
\end{equation}
Thus, using \eqref{RHsolution} and the fact that $\det Y \equiv 1$, we may express the eigenvalue correlation kernel
directly in terms of $Y$:
\begin{equation}
\label{CD-kernel}
K_{n,N}(x,y) = \frac{1}{2\pi i(x-y)}
    |x|^{\alpha} |y|^{\alpha} e^{-\frac{1}{2}N(V(x)+ V(y))}
    \begin{pmatrix} 0 & 1 \end{pmatrix}
        Y_+^{-1}(y) Y_+(x) \begin{pmatrix} 1 \\ 0 \end{pmatrix}.
\end{equation}

The main idea for the proof of Theorems \ref{theorem1} and
\ref{theorem2} is to apply the
powerful steepest descent analysis for RH problems of Deift and Zhou
\cite{DeiftZhou} to the RH problem satisfied by $Y$. In the case at
hand it consists of constructing a sequence of invertible
transformations $Y \mapsto T \mapsto S \mapsto R$, where the
matrix-valued function $R$ is close to the identity. By unfolding the above
transformations asymptotics for $Y$ and thus, in view of \eqref{CD-kernel}, for
$K_{n,N}$ in various regimes may be derived. Our main attention will
be devoted to the local behavior of $Y$ near $0$. Around $0$ we
construct a local parametrix with the help of the model RH problem,
which we next discuss in more detail.

\subsection{The model RH problem} \label{modelRHP2}

The model RH problem is not uniquely solvable.
\begin{proposition} \label{nonunique}
    Let $\Psi_{\alpha}$ be a solution of the model RH problem.
    Then the following hold.
    \begin{enumerate}
    \item[\rm (a)] $\det \Psi_{\alpha} \equiv 1$.
    \item[\rm (b)] For any $\eta \in \mathbb R$ (which may depend on $s$),
    we have that $\left(\begin{smallmatrix} 1 & 0 \\ \eta & 1 \end{smallmatrix}\right) \Psi_{\alpha}$
    also solves the model RH problem.
    \item[\rm (c)] Any two solutions are related as in part {\rm (b)}, i.e., if
    $\Psi_{\alpha}^{(1)}$ and $\Psi_{\alpha}^{(2)}$ are two solutions of the model RH problem,
    then $\Psi_{\alpha}^{(2)} = \left(\begin{smallmatrix} 1 & 0 \\ \eta & 1
    \end{smallmatrix}\right) \Psi_{\alpha}^{(1)}$ for some $\eta = \eta(s)$.
    \end{enumerate}
\end{proposition}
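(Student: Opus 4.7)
The plan is to prove the three parts in order, using at each step the combination of conditions (a)--(d) on $\Psi_\alpha$.

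For part (a), observe that each jump matrix in condition (b) of the model RH problem has determinant $1$, so $\det\Psi_\alpha$ has no jumps across $\Sigma$ and is analytic on $\mathbb{C}\setminus\{0\}$. Condition (d) then gives $|\det\Psi_\alpha(\zeta)| = O(|\zeta|^{-k})$ with $k = \max(0,-2\alpha) \in [0,1)$: for $\alpha \geq 0$ the complementary powers $|\zeta|^{\pm\alpha}$ on the two columns in $\Omega_1\cup\Omega_4$ cancel in the determinant, while for $-1/2 < \alpha < 0$ one has $\det\Psi_\alpha = O(|\zeta|^{2\alpha})$. Since $\det\Psi_\alpha$ is single-valued and analytic in a punctured disc about $0$, Cauchy's estimate bounds its negative Laurent coefficients by $|c_{-m}| = O(r^{m-k})$, which forces $c_{-m}=0$ for all $m\geq 1$; the singularity is removable. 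Finally, each factor in the asymptotic (c) of $\Psi_\alpha$ has determinant $1$ (in particular $\det\tfrac{1}{\sqrt{2}}\begin{pmatrix}1&i\\i&1\end{pmatrix}=1$), so $\det\Psi_\alpha \to 1$ at infinity, and Liouville's theorem forces $\det\Psi_\alpha \equiv 1$.

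Part (b) is a direct verification. Left multiplication by the constant matrix $L_\eta := \begin{pmatrix}1&0\\\eta&1\end{pmatrix}$ does not interact with the right-acting jumps in (b), and adding $\eta$ times row one to row two does not enlarge any of the sectorwise $O$-estimates in (d). For the asymptotic at infinity, the identity $L_\eta\zeta^{-\sigma_3/4} = \zeta^{-\sigma_3/4}L_{\eta\zeta^{-1/2}}$ together with $L_{\eta\zeta^{-1/2}} = I+O(\zeta^{-1/2})$ allows me to absorb $L_\eta$ into the $I+O(\zeta^{-1/2})$ factor prescribed by (c).

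For part (c), the natural object is $R := \Psi_\alpha^{(2)}(\Psi_\alpha^{(1)})^{-1}$. The right-acting jumps cancel, so $R$ is analytic on $\mathbb{C}\setminus\{0\}$. Using $\det\Psi_\alpha^{(1)}=1$ from part (a), the inverse is given by the adjugate and inherits the estimates in (d). Multiplying the estimates yields $R(\zeta)=O(1)$ in $\Omega_1\cup\Omega_4$ when $\alpha\geq 0$ and $R(\zeta)=O(|\zeta|^{2\alpha})$ when $\alpha<0$, and the Laurent argument from part (a) then extends $R$ to an entire function. At infinity the exponential factors in (c) cancel and yield $R(\zeta)=\zeta^{-\sigma_3/4}A(I+O(\zeta^{-1/2}))A^{-1}\zeta^{\sigma_3/4}$ with $A = \tfrac{1}{\sqrt{2}}\begin{pmatrix}1&i\\i&1\end{pmatrix}$, which is bounded. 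By Liouville, $R$ is a constant matrix $\begin{pmatrix}a&b\\c&d\end{pmatrix}$. Reading off the effect of the conjugation by $\zeta^{\sigma_3/4}$, the $(1,2)$-entry would produce a term of order $\zeta^{1/2}$ at infinity unless $b=0$; combined with $\det R=1$ (from part (a)) and the leading diagonal behavior, this forces $a=d=1$, leaving $R=L_\eta$ with $\eta := c$.

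The main obstacle will be the local analysis at $\zeta=0$ in part (c). In $\Omega_2\cup\Omega_3$ the naive product of the bounds on $\Psi_\alpha^{(2)}$ and $(\Psi_\alpha^{(1)})^{-1}$ only gives $R(\zeta)=O(|\zeta|^{-2\alpha})$, which is not removable once $\alpha\geq 1/2$. What saves the argument is that $R$, unlike each $\Psi_\alpha^{(i)}$ separately, is analytic in a \emph{full} punctured disc about $0$, so its single Laurent expansion is controlled uniformly in angle: the better estimate $O(1)$ in $\Omega_1\cup\Omega_4$ pins down the negative Laurent coefficients globally, forcing them to vanish and making the singularity removable in every sector.
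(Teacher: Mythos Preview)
Your argument is correct and follows essentially the same route as the paper: determinant-one jumps plus the origin estimates and Liouville for (a), direct verification for (b), and the ratio $R=\Psi_\alpha^{(2)}(\Psi_\alpha^{(1)})^{-1}$ reduced to a constant lower-triangular matrix via removability at $0$ and the structured $O$-bound at infinity for (c). One small point of presentation: your Cauchy-estimate line in part (a) literally needs a bound on the \emph{full} circle, which for $\alpha\ge 1/2$ is only $O(|\zeta|^{-2\alpha})$, not $O(1)$; the clean fix---which you yourself supply in your final paragraph for part (c)---is to use the global polynomial bound to exclude an essential singularity and then the sectorial $O(1)$ bound in $\Omega_1\cup\Omega_4$ to exclude a pole, exactly as the paper does.
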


\begin{proof}
(a) We have that $\det \Psi_{\alpha}$ is analytic in $\mathbb{C}
\setminus \{0\}$, since all jump matrices have determinant one. In
case $\alpha < 0$ we get from  condition (d) of the RH problem
that $\det \Psi_{\alpha}(\zeta) = O(|\zeta|^{2\alpha})$ as $\zeta \to 0$.
Since $2\alpha > -1$ it follows that the singularity at the origin
is removable.
In case $\alpha \geq 0$ we find from condition (d) of the RH problem
that $\det \Psi_{\alpha}(\zeta) = O(1)$ as $\zeta \to 0$ in
$\Omega_1 \cup \Omega_4$. Thus the singularity at the origin
cannot be a pole.  Since $\det \Psi_{\alpha} =
O(|\zeta|^{-2\alpha})$ as $\zeta \to 0$, it cannot
be an essential singularity either and therefore the singularity
at the origin is removable also in this case.
Hence $\det \Psi_{\alpha}$ is entire.
From condition (c) of the RH problem it follows that
$\det \Psi_{\alpha}(\zeta) \to 1$ as $\zeta \to \infty$, and
so part (a) of the proposition follows from Liouville's theorem.

(b) It is clear that $\left(\begin{smallmatrix} 1 & 0 \\ \eta & 1 \end{smallmatrix}\right)
\Psi_{\alpha}$ satisfies the conditions
(a), (b), and (d) of the model RH problem. To establish (c) it is enough to observe that
\begin{alignat*}{2} \begin{pmatrix} 1 & 0 \\ \eta & 1 \end{pmatrix} \zeta^{-\sigma_3/4} \frac{1}{\sqrt{2}}
\begin{pmatrix} 1 & i \\ i & 1 \end{pmatrix} &= \zeta^{-\sigma_3/4} \frac{1}{\sqrt{2}}
\begin{pmatrix} 1 & i \\ i & 1 \end{pmatrix} \left(I + \frac{\eta}{2 \zeta^{1/2}}
\begin{pmatrix} -i & 1 \\ 1 & i \end{pmatrix} \right)\\
&= \zeta^{-\sigma_3/4} \frac{1}{\sqrt{2}}
\begin{pmatrix} 1 & i \\ i & 1 \end{pmatrix} (I + O(1/\zeta^{1/2}))
\end{alignat*}
as $\zeta \to \infty$.

(c) In view of part (a) we know that $\Psi_{\alpha}^{(1)}$ is invertible.
Then $\Psi_{\alpha}^{(2)} (\Psi_{\alpha}^{(1)})^{-1}$ is analytic
in $\mathbb C \setminus \{0\}$ and, by arguments similar to those in the
proof of part (a), the singularity at the origin is removable.
As $\zeta \to \infty$ we get from condition (c) of the model RH problem that
\begin{alignat*}{2}
\Psi_{\alpha}^{(2)}(\zeta) (\Psi_{\alpha}^{(1)}(\zeta))^{-1} &= \zeta^{-\sigma_3/4} \frac{1}{\sqrt{2}}
\begin{pmatrix} 1 & i \\ i & 1 \end{pmatrix} (I + O(\zeta^{-1/2})) \frac{1}{\sqrt{2}} \begin{pmatrix} 1 & -i \\ -i & 1 \end{pmatrix}
\zeta^{\sigma_3/4}\\
&= I + O \begin{pmatrix} \zeta^{-1/2} & \zeta^{-1} \\ 1 & \zeta^{-1/2} \end{pmatrix}.
\end{alignat*}
The statement now follows from Liouville's theorem.
\end{proof}

In the following we will need more information about the behavior at the origin of functions
satisfying properties (a), (b), and (d) of the model RH problem.
The following result is similar to Proposition 2.3 in \cite{CKV}.

\begin{proposition} \label{Psiat0Prop}
Let $\Psi$ satisfy conditions {\rm (a), (b)}, and {\rm (d)} of the
RH problem for $\Psi_{\alpha}$.
Then, with all branches being principal, the following hold.
\begin{itemize}
\item If $\alpha - \frac{1}{2} \notin \mathbb{N}_0$, there exist an analytic matrix-valued
function $E$ and constant matrices $A_j$ such that
\begin{equation}
\label{Psiat0Eq1} \Psi(\zeta) =
E(\zeta)\,\zeta^{\alpha \sigma_3}\,A_j, \quad
\text{ for } \zeta \in \Omega_j.
\end{equation}
Letting $v_j$ denote the jump matrix for
$\Psi$ on $\Sigma_j$, we have
\begin{equation}
\label{Amatrices1}
A_1 = A_4\,v_1, \quad A_1 =
A_2\,v_2, \quad A_3 = A_4\,v_4,
\end{equation}
and
\begin{equation}
\label{Amatrices2}
A_2 =
\begin{pmatrix} \ds \frac{1}{2\cos \alpha \pi} & \ds \frac{1}{2\cos \alpha \pi} \\[10pt]
\ds -e^{\alpha \pi i} & \ds e^{-\alpha \pi i} \end{pmatrix}.
\end{equation}
\item If $\alpha - \frac{1}{2} \in \mathbb{N}_0$, then
$\Psi$ has logarithmic behavior at the origin: There exist
an analytic matrix-valued function $E$ and constant matrices $A_j$ such that
\begin{equation}
\label{Psiat0Eq2} \Psi(\zeta) =
E(\zeta)\,
\begin{pmatrix} \ds \zeta^{\alpha} & \ds \frac{1}{\pi} \zeta^{\alpha} \log \zeta \\[10pt]
\ds 0 & \ds\zeta^{-\alpha}\end{pmatrix} A_j, \quad \text{ for } \zeta \in
\Omega_j.
\end{equation}
Letting $v_j$ denote the jump matrix for $\Psi$ on
$\Sigma_j$, we now have
\begin{equation}
\label{Amatrices3} A_1 = A_4\,v_1, \quad A_1 =
A_2\,v_2, \quad A_3 = A_4\,v_4,
\end{equation}
and
\begin{equation}
\label{Amatrices4} A_2 =
\begin{pmatrix} \ds 0 & \ds e^{3\pi i/4} \\[10pt] \ds e^{\pi i/4} & \ds e^{\pi i/4} \end{pmatrix}.
\end{equation}
\item In all cases it holds that $\det A_j = 1$ and
\begin{equation}
\label{OneElementis0}
(A_1)_{21} = (A_4)_{21} = 0.
\end{equation}
\end{itemize}
\end{proposition}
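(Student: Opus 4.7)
The plan is to construct the analytic factor $E$ explicitly by setting
\[
E(\zeta) = \Psi(\zeta)\,A_j^{-1}\,\zeta^{-\alpha\sigma_3}, \qquad \zeta \in \Omega_j,
\]
where $\zeta^{\alpha\sigma_3}$ is taken with the principal branch ($\arg\zeta\in(-\pi,\pi)$) and, in the resonant case $\alpha-\tfrac12\in\mathbb{N}_0$, replaced by the log-modified local model appearing in \eqref{Psiat0Eq2}. The four constant matrices $A_j$ are to be chosen so that $E$ is continuous across all of $\Sigma$ (hence, by Morera, analytic on $\mathbb{C}\setminus\{0\}$) and bounded at $0$ (hence, by Riemann, analytic on all of $\mathbb{C}$).

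The first step is to handle the jumps. Across $\Sigma_1,\Sigma_2,\Sigma_4$ the factor $\zeta^{-\alpha\sigma_3}$ is continuous, so continuity of $E$ there is equivalent to the three algebraic relations in \eqref{Amatrices1}. Across $\Sigma_3$ the principal branch of $\zeta^{-\alpha\sigma_3}$ contributes an extra factor $e^{\pm 2\pi i\alpha\sigma_3}$, and matching $E$ across this ray produces the one remaining compatibility requirement
\[
A_3 = e^{2\pi i\alpha\sigma_3}\,A_2\,v_3^{-1}.
\]
Eliminating $A_2,A_3,A_4$ via \eqref{Amatrices1}, this reduces to a similarity statement for $A_1$ of the form $A_1^{-1}e^{2\pi i\alpha\sigma_3}A_1 = v_1^{-1}v_4v_3v_2$, whose right-hand side is upper triangular with eigenvalues $e^{\pm 2\pi i\alpha}$. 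When $\alpha-\tfrac12\notin\mathbb{N}_0$, these eigenvalues are distinct and the total monodromy is diagonalizable by an upper-triangular matrix; a direct linear-algebra calculation then produces the canonical upper-triangular choice
\[
A_1 = \begin{pmatrix} e^{\alpha\pi i} & (2\cos\alpha\pi)^{-1} \\ 0 & e^{-\alpha\pi i} \end{pmatrix},
\]
from which $A_2 = A_1 v_2^{-1}$ coincides with the explicit matrix \eqref{Amatrices2}, and $A_3,A_4$ are read off from \eqref{Amatrices1}. In the resonant case $\alpha-\tfrac12\in\mathbb{N}_0$, one has $e^{\pm 2\pi i\alpha}=-1$ and the total monodromy degenerates to a non-diagonalizable Jordan block at $-1$; the $(+,-)$ discontinuity of the log-modified model on $(-\infty,0)$ realizes exactly this Jordan block, and the same algebra then yields the explicit matrix \eqref{Amatrices4}.

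The second step is to verify that $0$ is a removable singularity of $E$. The identity $\det A_j = 1$ follows from Proposition~\ref{nonunique}(a) together with $\det v_j = 1$. The potentially dangerous contribution to $E = \Psi A_j^{-1}\zeta^{-\alpha\sigma_3}$ at the origin is a $|\zeta|^{-2\alpha}$ term in the first column of $E$ coming from the second column of $\Psi$ multiplied by $(A_j^{-1})_{21}\,\zeta^{-\alpha}$. In the sectors $\Omega_1\cup\Omega_4$ and for $\alpha\ge 0$, the bound (d) forces this term to vanish, i.e., $(A_j)_{21}=0$ for $j\in\{1,4\}$, and this is automatic from the upper-triangular form of $A_1$ displayed above together with $A_4 = A_1 v_1^{-1}$, which is again upper triangular. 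In the sectors $\Omega_2\cup\Omega_3$, and in all sectors when $\alpha<0$, an analogous power count shows that the remaining entries of $E$ are $O(1)$, so Riemann's theorem supplies the desired analytic extension across $0$.

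The main obstacle I anticipate is the resonant case $\alpha-\tfrac12\in\mathbb{N}_0$: one must verify carefully that the log-modified local model has precisely the Jordan-block discontinuity on $(-\infty,0)$ dictated by the total monodromy, and that the extra $\pi^{-1}\log\zeta$ term in the $(1,2)$ entry does not spoil the power counting at the origin. The required cancellation is again forced by the upper-triangular structure of $A_1$ together with the explicit $A_2$ in \eqref{Amatrices4}. The rest of the argument follows the standard local analysis already employed in the proof of the analogous Proposition~2.3 of \cite{CKV}, to which the text explicitly refers.
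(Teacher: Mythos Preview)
Your approach is essentially the paper's: define $E$ by the displayed formula, check that the relations \eqref{Amatrices1} kill the jumps on $\Sigma_1,\Sigma_2,\Sigma_4$, use the compatibility on $\Sigma_3$ to pin down $A_2$ (equivalently $A_1$), and then argue removability at the origin. Your derivation of $A_1$ from the similarity relation $A_1^{-1}e^{2\pi i\alpha\sigma_3}A_1=v_1^{-1}v_4v_3v_2$ is correct and gives exactly the paper's $A_2$; the paper simply posits $A_2$ and verifies the cyclic identity \eqref{cyclicidentity1}, which is the same computation in reverse.

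There is, however, a gap in your removability step. Your claim that in $\Omega_2\cup\Omega_3$ (for $\alpha\geq 0$) ``an analogous power count shows that the remaining entries of $E$ are $O(1)$'' is not correct: condition (d) only gives $\Psi=O(|\zeta|^{-\alpha})$ entrywise there, and $A_2,A_3$ are \emph{not} upper triangular, so the first column of $E=\Psi A_j^{-1}\zeta^{-\alpha\sigma_3}$ is genuinely $O(|\zeta|^{-2\alpha})$, not $O(1)$. Likewise for $\alpha<0$ you only get $E=O(|\zeta|^{2\alpha})$, not $O(1)$. The paper's fix is short: boundedness of $E$ in the single sector $\Omega_1$ (which you do obtain correctly from $(A_1)_{21}=0$) already excludes a pole, and the global $O(|\zeta|^{-2|\alpha|})$ bound excludes an essential singularity; hence the singularity is removable. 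You should replace your sector-by-sector $O(1)$ claim by this two-line argument.

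A small further correction: you invoke Proposition~\ref{nonunique}(a) to get $\det A_j=1$, but that proposition uses the asymptotic condition (c), which is \emph{not} assumed here. The conclusion $\det A_j=1$ instead follows directly from your explicit upper-triangular $A_1$ (which visibly has determinant $1$) together with $\det v_j=1$.
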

\begin{proof}
The statement \eqref{OneElementis0} is an immediate consequence of
the explicit formulas for the $A_j$'s.

Consider the case $\alpha - \frac{1}{2} \notin \mathbb{N}_0$. Define
$E$ by \eqref{Psiat0Eq1}, i.e., let
\begin{equation}
\label{Eat0def}
E(\zeta) =
\Psi(\zeta)\,A_j^{-1}\,\zeta^{-\alpha \sigma_3}, \quad
\text{ for } \zeta \in \Omega_j,
\end{equation}
with $A_j$ as in \eqref{Amatrices1}, \eqref{Amatrices2}. Then $E$ is analytic in $\mathbb{C} \setminus \Sigma$.
We now show that $E$ is indeed entire.
The relations \eqref{Amatrices1} and the condition (b) of the model RH problem imply that $E$ is analytic
also on $\Sigma_1 \cup \Sigma_2 \cup \Sigma_4$. Moreover, on $\Sigma_3$
\begin{equation*}
E_-^{-1}(\zeta) E_+(\zeta) = \zeta_-^{\alpha \sigma_3}\,A_3\,v_3\,A_2^{-1}\,\zeta_+^{-\alpha \sigma_3}.
\end{equation*}
Now, by \eqref{Amatrices1}, \eqref{Amatrices2}, and straightforward computation
\begin{equation}
\label{cyclicidentity1}
A_3\,v_3\,A_2^{-1} = A_2\,v_2\,v_1^{-1}\,v_4\,v_3\,A_2^{-1} = e^{2\alpha \pi i\sigma_3}
= \zeta_-^{-\alpha \sigma_3}\,\zeta_+^{\alpha \sigma_3}.
\end{equation}
Thus, $E$ is analytic also on $\Sigma_3$, and therefore in $\mathbb{C} \setminus \{0\}$.

We next show that the singularity at $0$ is removable. If $\alpha <
0$, we see from \eqref{Eat0def} and the condition (d) of the model RH problem, that as $\zeta
\to 0$
\begin{equation*}
E(\zeta) = O\begin{pmatrix} |\zeta|^{\alpha} &
|\zeta|^{\alpha} \\ |\zeta|^{\alpha} & |\zeta|^{\alpha}
\end{pmatrix} O\begin{pmatrix} 1 & 1 \\ 1 & 1 \end{pmatrix}
O\begin{pmatrix} |\zeta|^{-\alpha} & 0 \\ 0 & |\zeta|^{\alpha}
\end{pmatrix} = O\begin{pmatrix} 1 & |\zeta|^{2\alpha} \\ 1 & |\zeta|^{2\alpha} \end{pmatrix},
\end{equation*}
so (since $2\alpha > -1$) the isolated singularity at $0$ is indeed
removable. If $\alpha \geq 0$ and $\zeta \to 0$ in $\Omega_1$ we find in the same way (also
using $(A_1)_{21} = 0$) that
\begin{equation*}
E(\zeta) = O\begin{pmatrix} |\zeta|^{\alpha} &
|\zeta|^{-\alpha} \\ |\zeta|^{\alpha} & |\zeta|^{-\alpha}
\end{pmatrix} O\begin{pmatrix} 1 & 1 \\ 0 & 1 \end{pmatrix}
O\begin{pmatrix} |\zeta|^{-\alpha} & 0 \\ 0 & |\zeta|^{\alpha}
\end{pmatrix} = O\begin{pmatrix} 1 & 1 \\ 1 & 1 \end{pmatrix},
\end{equation*}
so that $E$ is bounded near $0$ in $\Omega_1$
and thus $0$ cannot be a pole. Since $0$ cannot be an essential singularity
either, we conclude that the singularity is indeed removable.

In case $\alpha - \frac{1}{2} \in \mathbb{N}_0$ the proof
is almost identical, only now the equation \eqref{cyclicidentity1} is replaced by
\begin{alignat}{2} \nonumber
    A_3\,v_3\,A_2^{-1} & =
    A_2\,v_2\,v_1^{-1}\,v_4\,v_3\,A_2^{-1} =
    \begin{pmatrix} -1 & -2 i \\ 0 & -1\end{pmatrix} \\
    \label{cyclicidentity2}
    & = \begin{pmatrix} \zeta^{-\alpha} & - \frac{1}{\pi} \zeta^{\alpha} \log \zeta \\
        0 & \zeta^{\alpha} \end{pmatrix}_-
      \begin{pmatrix} \zeta^{\alpha} &  \frac{1}{\pi} \zeta^{\alpha} \log \zeta \\
        0 & \zeta^{-\alpha} \end{pmatrix}_+.
\end{alignat}
\end{proof}

\subsection{Existence of solution to the model RH problem}

We will need that for $s \in \mathbb R$ the model RH problem indeed
has a solution. To prove existence of a solution to the model RH
problem it suffices to prove existence of a (unique) solution
$\Psi_{\alpha}^{(spec)}$ to the special RH problem obtained when the asymptotics (c) at
infinity is replaced by the following stronger condition
\begin{equation} \label{Psispecial}
    \Psi_{\alpha}^{(spec)}(\zeta) = (I + O(1/\zeta)) \zeta^{-\sigma_3/4}
    \frac{1}{\sqrt{2}} \begin{pmatrix} 1 & i \\ i & 1 \end{pmatrix}
      e^{-(\frac{2}{3} \zeta^{3/2} + s \zeta^{1/2})\sigma_3}, \end{equation}
as $\zeta \to \infty$.

A key element in the proof of unique solvability of the RH problem
for $\Psi_{\alpha}^{(spec)}$ is the following vanishing lemma (cf.\ \cite{FZ}).

\begin{proposition}\label{vanishinglemma} \textbf{(vanishing lemma)}
Let $\alpha > -1/2$, $s \in \mathbb{R}$, and put $\theta(\zeta) = \theta(\zeta;s) = \frac{2}{3} \zeta^{3/2} + s \zeta^{1/2}$.
Suppose that $F_{\alpha}$ satisfies the conditions {\rm (a), (b)}, and {\rm (d)} in the RH
problem for $\Psi_{\alpha}$ but, instead of condition {\rm (c)},
has the following behavior at infinity:
\begin{equation}
\label{Psiatinfinity}
F_{\alpha}(\zeta) = O(1/\zeta) \zeta^{-\sigma_3/4}
    \frac{1}{\sqrt{2}} \begin{pmatrix} 1 & i \\ i & 1 \end{pmatrix} e^{-\theta(\zeta) \sigma_3},
\end{equation}
as $\zeta \to \infty$. Then $F_{\alpha} \equiv 0$.
\end{proposition}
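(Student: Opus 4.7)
The argument is the Fokas--Zhou vanishing lemma adapted to the four-ray contour $\Sigma$. The plan is to form an auxiliary Hermitian-type pairing $L$ from $F_\alpha$ and its Schwarz reflection, apply Cauchy's theorem on a contour in $\mathbb C_+$, and exploit positivity of the resulting quadratic form to force $F_\alpha \equiv 0$.

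First I normalize the exponential asymptotics by setting $G(\zeta) := F_\alpha(\zeta)\,e^{\theta(\zeta)\sigma_3}$, with $\theta(\zeta) = \tfrac{2}{3}\zeta^{3/2} + s\zeta^{1/2}$ on the principal branch (cut along $\Sigma_3$). By \eqref{Psiatinfinity} the entries of $G$ are $O(|\zeta|^{-3/4})$ as $\zeta\to\infty$, and its new jumps are
\[
V^G_1=\begin{pmatrix}1&e^{-2\theta}\\0&1\end{pmatrix},\quad V^G_{2,4}=\begin{pmatrix}1&0\\e^{\pm2\alpha\pi i}e^{2\theta}&1\end{pmatrix},\quad V^G_3=\begin{pmatrix}0&1\\-1&0\end{pmatrix},
\]
where the off-diagonal entries on $\Sigma_1$ and $\Sigma_{2,4}$ decay exponentially at infinity (since $\Re\theta\to+\infty$ on $\Sigma_1$ and $\Re\theta\to-\infty$ on $\Sigma_{2,4}$), and $V^G_3$ is unitary.

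Next I set $L(\zeta):=G(\zeta)\,G(\bar\zeta)^{*}$ for $\zeta\in\mathbb C_+$, which is analytic in the upper half plane except for a jump on $\Sigma_2$. Since $\overline{\theta(\bar\zeta)}=\theta(\zeta)$ for real $s$, the exponential factors in $G(\zeta)$ and $G(\bar\zeta)^{*}$ cancel, and hence $L=O(|\zeta|^{-3/2})$ entrywise. Applying Cauchy's theorem to $L$ on a contour consisting of $\mathbb R$ (indented around the origin by a small semicircle), a large semicircle at infinity in $\mathbb C_+$, and a detour around $\Sigma_2$, the arc at infinity contributes nothing by the $|\zeta|^{-3/2}$ decay and the arc at $0$ contributes nothing because condition (d) of the model RH problem, together with the hypothesis $\alpha>-1/2$, makes $L$ locally integrable near the origin. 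Running the analogous argument in $\mathbb C_-$ and adding the two identities (Hermitian conjugates of each other) yields a single identity of the form
\[
\sum_{j=1}^{4}\int_{\Sigma_j}G_-(x)\,W_j(x)\,G_-(x)^{*}\,dx\;=\;0,
\]
with each Hermitian matrix $W_j(x)$ built from $V^G_j$. A direct check shows each $W_j$ is positive semidefinite: on $\Sigma_1$ and $\Sigma_{2,4}$ via the triangular structure of the jumps, and on $\Sigma_3$ via the unitarity of $V^G_3$ (which turns the $\Sigma_3$ contribution into $G_-G_-^{*}\ge 0$ rather than the useless $G_-(V^G_3+(V^G_3)^{*})G_-^{*}=0$). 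The vanishing of a sum of non-negative Hermitian integrals forces $G_-\equiv 0$ on $\Sigma$, and by analytic continuation $G\equiv 0$ and hence $F_\alpha\equiv 0$.

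The main obstacle is arranging the positivity of all four $W_j$ simultaneously. The $\Sigma_3$ contribution is the trickiest, since the naive Hermitian symmetrization $V^G_3+(V^G_3)^{*}$ vanishes identically; the rescue is the identity $V^G_3(V^G_3)^{*}=I$, which replaces the zero contribution by the non-trivial but manifestly non-negative $G_-G_-^{*}$. On $\Sigma_1$ and $\Sigma_{2,4}$ one must verify positivity by an explicit factorization rather than via trace/determinant, since the off-diagonal entries $e^{\pm 2\theta}$ can be arbitrarily large when $s$ is large negative, so the positivity does not follow from a one-line eigenvalue estimate. Finally, the integrability near $\zeta=0$, controlled by Proposition \ref{Psiat0Prop}, hinges crucially on the strict inequality $\alpha>-1/2$, and the logarithmic case $\alpha-\tfrac{1}{2}\in\mathbb N_0$ has to be addressed on its own.
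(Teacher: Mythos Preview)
Your outline follows the Fokas--Zhou philosophy, but the positivity step does not go through without the preliminary ``folding'' to the real axis that the paper performs, and the final vanishing step is missing.

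The decisive gap is the claim that $W_1$ is positive semidefinite. After your normalization $G=F_\alpha e^{\theta\sigma_3}$, the jump on $\Sigma_1$ is $V^G_1=\left(\begin{smallmatrix}1&e^{-2\theta}\\0&1\end{smallmatrix}\right)$, and the Hermitian combination produced by adding the Cauchy identity to its adjoint is
\[
W_1=V^G_1+(V^G_1)^*=\begin{pmatrix}2&e^{-2\theta(x)}\\ e^{-2\theta(x)}&2\end{pmatrix},
\]
with eigenvalues $2\pm e^{-2\theta(x)}$. For small $x>0$ and $s<0$ one has $\theta(x)\approx s\sqrt{x}<0$, so $e^{-2\theta(x)}>2$ and $W_1$ is indefinite. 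No ``explicit factorization'' can help: a Hermitian $2\times 2$ matrix whose (real) off-diagonal entry exceeds its diagonal entries is never positive semidefinite. The paper avoids precisely this obstruction by first folding the $\Sigma_2,\Sigma_4$ jumps onto $\mathbb R$ via the transformation \eqref{DefinitionGalpha}; this makes the auxiliary function analytic in each half-plane (no detours around $\Sigma_2$ are needed, so the awkward bilinear terms $G_-(\zeta)[\,\cdot\,]G_-(\bar\zeta)^*$ with $\zeta\in\Sigma_2$, $\bar\zeta\in\Sigma_4$ never arise) and, crucially, converts the jump on $(0,\infty)$ into $\left(\begin{smallmatrix}e^{-2\theta}&-1\\1&0\end{smallmatrix}\right)$, whose symmetrization $\left(\begin{smallmatrix}2e^{-2\theta}&0\\0&0\end{smallmatrix}\right)$ is manifestly positive semidefinite for all $s\in\mathbb R$.

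Two further points. Your ``rescue'' on $\Sigma_3$ is not a legitimate manipulation: the Cauchy argument yields $G_+G_-^*+G_-G_+^*=G_-(V^G_3+(V^G_3)^*)G_-^*=0$, and the unitarity identity $V^G_3(V^G_3)^*=I$ gives no license to replace this integrand by $G_-G_-^*$. Finally, even a successful positivity step does not give $G_-\equiv 0$: positive \emph{semi}definiteness (with zero eigenvalues present) kills only the components of $G_-$ lying in the range of $W_j$. In the paper the quadratic-form step establishes only that the first column of $G_{\alpha,-}$ vanishes on $\mathbb R$; passing from this to full vanishing requires a separate Phragm\'en--Lindel\"of argument (Carlson's theorem applied to the scalar functions $g_j$). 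Your sentence ``$G_-\equiv 0$ on $\Sigma$, and by analytic continuation $G\equiv 0$'' skips this nontrivial endgame entirely.
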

\begin{proof}
The ideas of the proof are similar in spirit to those in Deift et
al.\ \cite{DKMVZ2}. Let $G_{\alpha}$ be defined as follows:
\begin{equation}
\label{DefinitionGalpha}
G_{\alpha}(\zeta) = \left\{
\begin{array}{ll}
  F_{\alpha}(\zeta) e^{\theta(\zeta) \sigma_3} \begin{pmatrix} 0 & -1 \\ 1 & 0\end{pmatrix},
  &\text{ for } \zeta \in \Omega_1, \\[10pt]
  F_{\alpha}(\zeta) e^{\theta(\zeta) \sigma_3}
  \begin{pmatrix} 1 & 0 \\ e^{2 \alpha \pi i} e^{2\theta(\zeta)}& 1\end{pmatrix}
  \begin{pmatrix} 0 & -1 \\ 1 & 0\end{pmatrix}, &\text{ for } \zeta \in \Omega_2, \\[10pt]
  F_{\alpha}(\zeta) e^{\theta(\zeta) \sigma_3} \begin{pmatrix} 1 & 0 \\ -e^{-2 \alpha \pi i} e^{2\theta(\zeta)} & 1\end{pmatrix},
  &\text{ for } \zeta \in \Omega_3, \\[10pt]
  F_{\alpha}(\zeta) e^{\theta(\zeta) \sigma_3}, &\text{ for } \zeta \in \Omega_4.
\end{array} \right.
\end{equation}

Then $G_{\alpha}$ satisfies the following RH problem.
\paragraph{Riemann-Hilbert problem for $G_{\alpha}$}
\begin{enumerate}
\item[\rm (a)] $G_{\alpha} : \mathbb C \setminus \mathbb R \to \mathbb{C}^{2 \times 2}$ is analytic.
\item[\rm (b)] $G_{\alpha,+}(\zeta) = G_{\alpha,-}(\zeta) v_{G_{\alpha}}(\zeta)$
for $\zeta \in \mathbb R \setminus \{0\}$, where
\begin{equation}
\label{GalphaJump} v_{G_{\alpha}}(\zeta) = \left\{
\begin{array}{ll}
  \begin{pmatrix} e^{-2\theta(\zeta)} & -1 \\ 1 & 0 \end{pmatrix},
  &\text{ for } \zeta > 0,  \\[10pt]
  \begin{pmatrix} 1 & -e^{2 \alpha \pi i} e^{2\theta_+(\zeta)}\\
  e^{- 2\alpha \pi i} e^{2\theta_-(\zeta)} & 0\end{pmatrix}, &\text{ for } \zeta < 0.
\end{array}
\right.
\end{equation}
\item[\rm (c)] $G_{\alpha}(\zeta) = O(\zeta^{-3/4}) \quad \text{ as } \zeta \to \infty.$
\item[\rm (d)] $G_{\alpha}$ has the following behavior near the origin: If $\alpha < 0$,
\begin{equation}
\label{Galphaat01}
G_{\alpha}(\zeta) = O\begin{pmatrix} |\zeta|^{\alpha} & |\zeta|^{\alpha} \\
|\zeta|^{\alpha} & |\zeta|^{\alpha} \end{pmatrix},\quad \text{ as } \zeta \to 0,
\end{equation}
and if $\alpha \geq 0$,
\begin{equation}
\label{Galphaat02}
G_{\alpha}(\zeta) = \left\{ \begin{array}{ll}
    O\begin{pmatrix} |\zeta|^{-\alpha} & |\zeta|^{\alpha} \\
    |\zeta|^{-\alpha} & |\zeta|^{\alpha} \end{pmatrix}
    & \text{ as } \zeta \to 0, \Im \zeta > 0, \\[10pt]
    O\begin{pmatrix} |\zeta|^{\alpha} & |\zeta|^{-\alpha} \\
    |\zeta|^{\alpha} & |\zeta|^{-\alpha} \end{pmatrix}
    & \text{ as } \zeta \to 0, \Im \zeta < 0.
    \end{array} \right.
\end{equation}
\end{enumerate}

The jumps in (b) follow from straightforward computations which uses that
$\theta_+(\zeta) +\theta_-(\zeta) = 0$ for $\zeta < 0$.
The behavior (c) of $G_{\alpha}$ at infinity (uniformly in each sector) follows directly
from \eqref{Psiatinfinity}, \eqref{DefinitionGalpha}, and the fact that $\Re \theta(\zeta) < 0$
for $\zeta \in \Omega_2 \cup \Omega_3$.
The behavior \eqref{Galphaat01} at the origin is immediate from the condition (d) of the RH problem for $F_{\alpha}$,
and so is the behavior \eqref{Galphaat02} if $\zeta \to 0$ with $\zeta \in \Omega_1 \cup \Omega_4$.
To prove \eqref{Galphaat02} if $\zeta \to 0$ with $\zeta \in \Omega_2 \cup \Omega_3$,
we need Proposition \ref{Psiat0Prop}.
Consider first the case $\alpha - \frac{1}{2} \notin \mathbb{N}_0$ and $\zeta \in \Omega_2$.
Then we have, using \eqref{DefinitionGalpha}, \eqref{Psiat0Eq1}, \eqref{Amatrices1},
and \eqref{OneElementis0}
\begin{alignat*}{2}
G_{\alpha}(\zeta) &= F_{\alpha}(\zeta) e^{\theta(\zeta) \sigma_3}
  \begin{pmatrix} 1 & 0 \\ e^{2 \alpha \pi i} e^{2\theta(\zeta)}& 1\end{pmatrix}
  \begin{pmatrix} 0 & -1 \\ 1 & 0\end{pmatrix}\\
  &= E(\zeta) \zeta^{\alpha \sigma_3} A_2
  \begin{pmatrix} 1 & 0 \\ e^{2 \alpha \pi i} & 1\end{pmatrix} e^{\theta(\zeta) \sigma_3}
  \begin{pmatrix} 0 & -1 \\ 1 & 0\end{pmatrix}\\
  &= E(\zeta) \zeta^{\alpha \sigma_3} A_2
  \begin{pmatrix} 1 & 0 \\ e^{2 \alpha \pi i} & 1\end{pmatrix} \begin{pmatrix} 0 & -1 \\ 1 & 0 \end{pmatrix} e^{-\theta(\zeta) \sigma_3}\\
  &= E(\zeta) \zeta^{\alpha \sigma_3} A_1 \begin{pmatrix} 0 & -1 \\ 1 & 0 \end{pmatrix} e^{-\theta(\zeta) \sigma_3} =
  E(\zeta) \zeta^{\alpha \sigma_3} \begin{pmatrix} * & * \\ 0 & *\end{pmatrix} \begin{pmatrix} 0 & -1 \\ 1 & 0 \end{pmatrix} e^{-\theta(\zeta) \sigma_3},
\end{alignat*}
where $*$ denotes an unspecified constant.
Using the boundedness of $E$ and $\theta$ at the origin, we find \eqref{Galphaat02} as $\zeta \to 0$ in
the sector $\Omega_2$. The case $\zeta \in \Omega_3$ is treated similarly.
Using \eqref{Psiat0Eq2}, \eqref{Amatrices3} instead of \eqref{Psiat0Eq1}, \eqref{Amatrices1},
the same argument works in case $\alpha - \frac{1}{2} \in \mathbb{N}_0$. Note
that in spite of the logarithmic entry in \eqref{Amatrices3}, there are no
logarithmic entries in \eqref{Galphaat02}.

Introduce the auxiliary matrix-valued function
\begin{equation} \label{Halpha1}
H_{\alpha}(\zeta) =
G_{\alpha}(\zeta)\,(G_{\alpha}(\bar{\zeta}))^{*}, \qquad \zeta \in \mathbb C \setminus \mathbb R.
\end{equation}
Then $H_{\alpha}$ is analytic and
\begin{equation} \label{Halpha2}
    H_{\alpha}(\zeta) = O(\zeta^{-3/2}), \qquad \text{as }
    \zeta \to \infty.
\end{equation}
From the condition (d) in the RH problem for $G_{\alpha}$
it follows that $H_{\alpha}$ has the following behavior near the origin:
\begin{equation} \label{Halpha3}
H_{\alpha}(\zeta) = \left\{ \begin{array}{ll}
    O\begin{pmatrix} |\zeta|^{2\alpha} & |\zeta|^{2\alpha} \\
    |\zeta|^{2\alpha} & |\zeta|^{2\alpha} \end{pmatrix}
    & \text{ as } \zeta \to 0, \text{ in case } \alpha < 0, \\[10pt]
    O\begin{pmatrix} 1 & 1 \\
    1 & 1 \end{pmatrix}
    & \text{ as } \zeta \to 0, \text{ in case } \alpha \geq 0.
    \end{array} \right.
\end{equation}
Since $\alpha > -1/2$, we see from \eqref{Halpha2} and \eqref{Halpha3} that each
entry of $H_{\alpha,+}$ is integrable
over the real line, and by Cauchy's theorem and \eqref{Halpha2}
\begin{equation} \label{Halpha4}
    \int_{\mathbb{R}} H_{\alpha,+}(\zeta)\,d\zeta = 0.
\end{equation}
That is, by \eqref{Halpha1},
\begin{equation} \label{VanishingIdentity1}
\int_{\mathbb{R}}
G_{\alpha,+}(\zeta)\,(G_{\alpha,-}(\zeta))^{*}\,d\zeta = 0.
\end{equation}
Adding \eqref{VanishingIdentity1} to its Hermitian conjugate and using \eqref{GalphaJump}
we obtain
\begin{align} \nonumber
    0 & = \int_{\mathbb{R}} G_{\alpha,-}(\zeta)
        \left[v_{G_{\alpha}}(\zeta) + (v_{G_{\alpha}}(\zeta))^*\right](G_{\alpha,-}(\zeta))^{*}\,d\zeta \\
    & = \int\limits_{-\infty}^0 G_{\alpha,-}(\zeta)
        \begin{pmatrix} 2 & 0 \\ 0 & 0 \end{pmatrix}
        (G_{\alpha,-}(\zeta))^{*}\,d\zeta +
        \int\limits_{0}^{+\infty} G_{\alpha,-}(\zeta)
        \begin{pmatrix} 2e^{-2\theta(\zeta)} & 0 \\ 0 & 0 \end{pmatrix}
        (G_{\alpha,-}(\zeta))^{*}\,d\zeta.
        \label{VanishingIdentity2}
\end{align}
Here we also used that $\theta_+(\zeta) = -\theta_-(\zeta) \in i\mathbb{R}$ for $\zeta < 0$,
which holds because $s$ is real. The identity \eqref{VanishingIdentity2} implies
that the first column of $G_{\alpha,-}$ vanishes identically on $\mathbb{R}$.
Thus, in view of the form of the jump matrix in \eqref{GalphaJump}, the second column of
$G_{\alpha,+}$ vanishes identically on $\mathbb{R}$ as well. It follows
that the first column of $G_{\alpha}$ vanishes identically in the lower half-plane, and
the second column vanishes identically in the upper half-plane.

To prove that the full matrix $G_{\alpha}$ vanishes identically in both
half-planes, we shall use a
Phragmen-Lindel\"{o}f type theorem due to Carlson \cite{Carlson,ReedSimon}.
Define for $j=1,2$,
\begin{equation}
g_j(\zeta) = \left\{
\begin{array}{ll}
  (G_{\alpha})_{j1}(\zeta), &\text{ for } \Im \zeta > 0,  \\[5pt]
  (G_{\alpha})_{j2}(\zeta), &\text{ for } \Im \zeta < 0.
\end{array}
\right.
\end{equation}
The conditions of the RH problem for $G_{\alpha}$ yield that
both $g_1$ and $g_2$ have analytic continuation across $(0, \infty)$
and that they are both solutions of the following scalar RH problem.
\paragraph{Riemann-Hilbert problem for $g$}
\begin{itemize}
\item $g : \mathbb{C} \setminus (-\infty, 0] \to \mathbb C$ is analytic with jump
\begin{equation} \label{gjump}
g_+(\zeta) = g_-(\zeta)\,e^{-2\alpha \pi i} e^{2\theta_-(\zeta)}, \qquad \text{ for } \zeta \in (-\infty, 0).
\end{equation}
\item $g(\zeta) = O(\zeta^{-3/4})$ as  $\zeta \to \infty$.
\item $g(\zeta) = O(|\zeta|^{-|\alpha|})$ as $\zeta \to 0$.
\end{itemize}
We are going to prove that this RH problem has only the trivial solution.

Let $g$ be any solution and define $\hat{g}$ by
\begin{equation} \label{hatgdefinition}
    \hat{g}(\zeta) = \left\{ \begin{array}{ll}
        g(\zeta^2), & \text{ if } \Re \zeta > 0, \\[5pt]
        g(\zeta^2) e^{-2\alpha \pi i} e^{-2 (\frac{2}{3} \zeta^3 + s \zeta)}, & \text{ if } \Re \zeta < 0, \, \Im \zeta > 0, \\[5pt]
        g(\zeta^2) e^{2\alpha \pi i} e^{-2(\frac{2}{3} \zeta^3 + s \zeta)}, & \text{ if } \Re \zeta < 0, \, \Im \zeta < 0.
        \end{array} \right.
\end{equation}
The jump property \eqref{gjump} ensures that $\hat{g}$ is analytic across the imaginary axis.

Now define
\begin{equation}
h(\zeta) = \left( \frac{\zeta}{1+\zeta} \right)^{\frac{8}{3} |\alpha|}
    \hat{g}(\zeta^{4/3}), \qquad \text{for } \Re \zeta \geq 0,
    \end{equation}
with (as usual) the principal branches of the fractional powers.
Then it can be checked that $h$ is analytic in $\Re \zeta > 0$,
bounded for $\Re \zeta \geq 0$, and satisfies
\[ |h(\zeta)| \leq C e^{-c |\zeta|^4}, \qquad \text{ if } \zeta \in i \mathbb R, \]
for some positive constants $c$ and $C$. Hence, by Carlson's theorem,
$h \equiv 0$ in $\Re \zeta \geq 0$. Therefore $g \equiv 0$, and so $g_1$ and $g_2$ are both identically zero.
It follows that the full matrix $G_{\alpha}$ vanishes identically in both half-planes.

Thus $F_{\alpha} \equiv 0$ by \eqref{DefinitionGalpha}, and this
completes the proof of the proposition.
\end{proof}

We now show how (unique) solvability of the RH problem for
$\Psi_{\alpha}^{(spec)}$ can be deduced from the above vanishing
lemma.
\begin{proposition}
\label{ExistenceProp}
The RH problem for $\Psi_{\alpha}^{(spec)}$ has a unique solution
for every $s \in \mathbb R$.
\end{proposition}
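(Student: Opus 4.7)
The plan is to reduce this to the vanishing lemma (Proposition \ref{vanishinglemma}) through the functional-analytic framework of \cite{FZ}, which is the standard route for Painlev\'e-type Riemann--Hilbert problems.

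First I would strip off the prescribed large-$\zeta$ behavior by introducing the global parametrix
\[
P_\infty(\zeta) = \zeta^{-\sigma_3/4}\,\frac{1}{\sqrt{2}}\begin{pmatrix} 1 & i \\ i & 1 \end{pmatrix}\,e^{-\theta(\zeta)\sigma_3},
\qquad \zeta \in \mathbb C \setminus (-\infty,0],
\]
with principal branches, and setting $X(\zeta) = \Psi_{\alpha}^{(spec)}(\zeta)\,P_\infty(\zeta)^{-1}$. A direct computation shows that the jump of $P_\infty$ across the negative real axis equals $\left(\begin{smallmatrix} 0 & 1 \\ -1 & 0 \end{smallmatrix}\right)$, i.e., the $\Sigma_3$-jump of $\Psi_{\alpha}^{(spec)}$. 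Hence $X$ extends analytically across $\Sigma_3$ and has jumps only on the reduced contour $\hat\Sigma = \Sigma_1 \cup \Sigma_2 \cup \Sigma_4$. On each of these rays the new jump matrix $v_X = P_{\infty,-}\,v_\Psi\,P_{\infty,+}^{-1}$ satisfies $v_X - I = O(e^{-c|\zeta|^{3/2}})$ as $\zeta \to \infty$, since $\Re\theta > 0$ on $\Sigma_1$ while $\Re\theta < 0$ on $\Sigma_2 \cup \Sigma_4$ for $|\zeta|$ large; the normalization at infinity becomes the standard $X(\zeta) = I + O(1/\zeta)$.

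Next I would rewrite this normalized problem as a singular integral equation on $\hat\Sigma$. Factorizing $v_X = (I - w_-)^{-1}(I + w_+)$ with $w_\pm$ supported on $\hat\Sigma$, a solution $X$ is equivalent to a solution $\mu$ of $(I - C_w)\mu = I$ in a suitable weighted $L^2(\hat\Sigma)$-space, where $C_w f = C_{-}(f w_+) + C_{+}(f w_-)$ is built from the standard Cauchy projectors. The exponential decay of $w_\pm$ at infinity, together with the integrable local bounds at $\zeta = 0$ guaranteed by condition (d) and by Proposition \ref{Psiat0Prop}, places $w_\pm$ in the class to which the general theory of \cite{FZ} applies, so that $I - C_w$ is bounded and Fredholm of index zero. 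By the Fredholm alternative, both existence and uniqueness reduce to verifying $\ker(I - C_w) = \{0\}$; a non-trivial kernel element would reconstruct, via its Cauchy integral multiplied by $P_\infty$, a non-trivial matrix $F_\alpha$ satisfying conditions (a), (b), (d) of the model RH problem together with the weakened asymptotics $F_\alpha(\zeta) = O(1/\zeta)\,P_\infty(\zeta)$ at infinity --- exactly the hypothesis of Proposition \ref{vanishinglemma}. That lemma forces $F_\alpha \equiv 0$, and hence $X$, and therefore $\Psi_{\alpha}^{(spec)} = X P_\infty$, exists and is unique.

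The main technical obstacle I anticipate is the bookkeeping at the self-intersection point $\zeta = 0$: the bounds of condition (d) split according to the sign of $\alpha$ and degenerate logarithmically when $\alpha - \tfrac12 \in \mathbb N_0$, while $P_\infty$ itself has a non-trivial algebraic singularity there. One must therefore choose the weighted function class on $\hat\Sigma$ tight enough to secure Fredholmness of $I - C_w$, yet loose enough to house any candidate kernel element reconstructed from a putative non-trivial $F_\alpha$. Proposition \ref{Psiat0Prop} supplies precisely the local structural information needed to carry out this verification uniformly in $\alpha$.
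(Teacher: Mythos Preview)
Your overall architecture---normalize to $I$ at infinity, recast as a singular integral equation, prove Fredholmness of index zero, and invoke the vanishing lemma for the kernel---is exactly the paper's route. The divergence is in how the origin is handled, and this is where your proposal has a real gap rather than a mere bookkeeping issue.

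With your normalization $X = \Psi_{\alpha}^{(spec)}P_\infty^{-1}$, a direct computation on $\Sigma_1$ gives
\[
v_X(\zeta) - I \;=\; \frac{e^{-2\theta(\zeta)}}{2}\begin{pmatrix} -i & \zeta^{-1/2} \\ \zeta^{1/2} & i \end{pmatrix},
\]
whose $(1,2)$ entry is $\sim \zeta^{-1/2}$ as $\zeta \to 0$. This is neither $L^\infty$ nor $L^2$ near the origin, so the data $w_\pm$ fall outside the $L^2 \cap L^\infty$ class on which the Fokas--Zhou/Deift--Zhou Fredholm machinery is set up. The same conjugation by $\zeta^{\pm\sigma_3/4}$ produces analogous non-$L^2$ singularities on $\Sigma_2$ and $\Sigma_4$, compounded by the $|\zeta|^{\pm\alpha}$ behavior of $\Psi_\alpha$. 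You flag this as ``the main technical obstacle'' and suggest weighted spaces, but you do not actually set one up or verify Fredholmness in it; Proposition~\ref{Psiat0Prop} alone does not do this for you.

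The paper resolves the obstacle differently: rather than weighting, it \emph{desingularizes} at the origin. Inside the unit disk $\mathbb{D}$ it sets
\[
m_\alpha(\zeta) = \Psi_{\alpha}^{(spec)}(\zeta)\,A_j^{-1}\begin{pmatrix} \zeta^{-\alpha} & -\tfrac{\kappa_\alpha}{\pi}\zeta^{\alpha}\log\zeta \\ 0 & \zeta^{\alpha} \end{pmatrix},
\qquad \zeta \in \Omega_j \cap \mathbb{D},
\]
with the $A_j$ from Proposition~\ref{Psiat0Prop}, which renders $m_\alpha$ \emph{analytic} in $\mathbb{D}$. Outside $\mathbb{D}$ it uses your $P_\infty^{-1}$. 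The price is an additional (bounded, smooth) jump on $\partial\mathbb{D}$, and one must check cyclic consistency relations at the five self-intersection points $0,A,B,C,D$ of the enlarged contour $\widetilde{\Sigma} = \Sigma \cup \partial\mathbb{D}$; but now $v - I \in L^2 \cap L^\infty(\widetilde{\Sigma})$ honestly, the standard pseudoinverse argument gives Fredholmness, $\det v \equiv 1$ gives index zero, and the vanishing lemma finishes. So Proposition~\ref{Psiat0Prop} is used not merely as ``structural information'' for weighted estimates, but to \emph{construct an explicit local parametrix} that removes the singularity. That is the missing idea in your proposal.
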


\begin{proof}
The idea of the proof is this: Given a solution
$\Psi_{\alpha}^{(spec)}$ to the above RH problem, we show how to
construct a solution $m_{\alpha}$ to a certain normalized RH problem
(i.e., $m_{\alpha}(\zeta) \to I$ as $\zeta \to \infty$)
characterized by a jump matrix $v$ on a contour $\widetilde{\Sigma}$,
and conversely. To prove the proposition
it therefore suffices to prove (unique) solvability of the
normalized RH problem. This can be done by
utilizing the basic relationship between normalized RH problems and
singular integral equations. We recall briefly, in our setting, some standard
facts regarding this relationship. For further details, and proofs, the reader is
referred to the papers \cite{DeiftZhou2,DeiftZhou3,DeiftZhou4,Zhou},
and to the appendix of \cite{KMM}.

Let $C$ denote the Cauchy operator
\begin{equation}
    C h(\zeta) = \frac{1}{2\pi i} \int_{\widetilde{\Sigma}} \frac{h(s)}{s-\zeta} ds,
    \qquad h \in L^2(\widetilde{\Sigma}),
    \quad \zeta \in \mathbb C \setminus \widetilde{\Sigma},
\end{equation}
and denote by $C_{\pm}h(\zeta)$, $\zeta \in \widetilde{\Sigma}$, the limits of $Ch(\zeta')$ as
$\zeta' \to \zeta$ from the $(\pm)$-side of $\widetilde{\Sigma}$.
The operators $C_{\pm}$ are bounded on $L^2(\widetilde{\Sigma})$.
Let
\begin{equation}
\label{factorization1}
v(\zeta) = (v_-(\zeta))^{-1} v_+(\zeta), \qquad \zeta \in \widetilde{\Sigma},
\end{equation}
be a pointwise factorization of $v(\zeta)$ with $v_{\pm}(\zeta) \in \text{GL}(2,\mathbb C)$,
and define $\omega_{\pm}$ through
\begin{equation}
\label{factorization2}
v_\pm(\zeta) = I \pm \omega_{\pm}(\zeta), \qquad \zeta \in \widetilde{\Sigma}.
\end{equation}
Our choice of factorization will imply that $\omega_{\pm} \in L^{2}(\widetilde{\Sigma}) \cap L^{\infty}(\widetilde{\Sigma})$.
The singular integral operator $C_{\omega} : L^2(\widetilde{\Sigma}) \to L^2(\widetilde{\Sigma})$,
defined by
\begin{equation}
\label{singint}
C_{\omega} h = C_+(h \omega_-) + C_-(h \omega_+), \qquad h \in L^2(\widetilde{\Sigma}),
\end{equation}
is then bounded on $L^2(\widetilde{\Sigma})$. Moreover, it makes sense to study the singular integral
equation
\begin{equation}
\label{singinteq}
(1 - C_{\omega}) \mu = I
\end{equation}
for $\mu \in I + L^2(\widetilde{\Sigma})$. For if we write $\mu = I + h$, then \eqref{singinteq} takes the
form
\begin{equation}
(1 - C_{\omega})h = C_{\omega}I \in L^2(\widetilde{\Sigma}).
\end{equation}
Suppose that $\mu \in I + L^2(\widetilde{\Sigma})$ is a solution of \eqref{singinteq}. Then, indeed
\begin{equation}
m_{\alpha}(\zeta) = I + C(\mu (\omega_+ +\omega_-))(\zeta), \qquad \zeta \in \mathbb C \setminus \widetilde{\Sigma},
\end{equation}
solves the normalized RH problem.
Thus, if we can prove that the operator $1 - C_{\omega}$ is a bijection in $L^2(\widetilde{\Sigma})$, then
solvability of the RH problem for $m_{\alpha}$, and hence of that for $\Psi_{\alpha}^{(spec)}$, has been
established. Bijectivity of $1 - C_{\omega}$ in $L^2(\widetilde{\Sigma})$ is proved in two steps.
We first show that, for an appropriate choice of $\omega = (\omega_-, \omega_+)$
in the above factorization,
$1 - C_{\omega}$ is Fredholm in $L^2(\widetilde{\Sigma})$ with index $0$.
Second, we show that the kernel of $1 - C_{\omega}$ is trivial. Now, it is a standard fact that
$\text{ker } (1 - C_{\omega}) = \{0\}$ if and only if the associated homogeneous RH problem (for say
$m_{\alpha}^{0}$) has only the trivial solution.
But the explicit relation between $\Psi_{\alpha}^{(spec)}$ and $m_{\alpha}$ also establishes a relation
between solutions $F_{\alpha}$ and $m_{\alpha}^{0}$ of the associated homogeneous RH problems.
In view of Proposition \ref{vanishinglemma}, which states that $F_{\alpha} \equiv 0$,
the second step has thus already been accomplished.

We now establish the above mentioned relation between
$\Psi_{\alpha}^{(spec)}$ and $m_{\alpha}$, derive the RH problem satisfied
by $m_{\alpha}$, and finally show that a factorization of $v$ may be chosen
so that $1 - C_{\omega}$ is Fredholm with index $0$, cf.\ \cite{FZ}.

Let $\mathbb{D} = \{\zeta \in \mathbb{C} \mid |\zeta| < 1\}$. Set
    $\theta(\zeta) = \frac{2}{3} \zeta^{3/2} + s \zeta^{1/2}$ and
\begin{equation}
\label{malphajump} m_{\alpha}(\zeta) = \left\{
\begin{array}{ll}
  \Psi_{\alpha}^{(spec)}(\zeta) A_j^{-1}
  \begin{pmatrix} \zeta^{-\alpha} & -\frac{\kappa_{\alpha}}{\pi} \zeta^{\alpha} \log \zeta \\ 0 & \zeta^{\alpha} \end{pmatrix},&
  \text{ for } \zeta \in \Omega_j \cap \mathbb{D}, \\[10pt]
  \Psi_{\alpha}^{(spec)}(\zeta) e^{\theta(\zeta) \sigma_3}
\frac{1}{\sqrt{2}} \begin{pmatrix} 1 & -i \\ -i & 1 \end{pmatrix}
\zeta^{\sigma_3/4}, & \text{ for } \zeta \in \Omega_j \cap
\overline{\mathbb{D}}^c,
\end{array}
\right.
\end{equation}
with $\{A_j\}_{j=1}^4$ being the matrices in Proposition \ref{Psiat0Prop},
and where $\kappa_{\alpha} = 1$ if $\alpha-1/2 \in \mathbb{N}_{0}$ and $0$ otherwise.
By Proposition \ref{Psiat0Prop} it follows that $m_{\alpha}$ is analytic in $\mathbb{D}$.
Let $\widetilde{\Sigma} = \Sigma \cup \partial \mathbb{D}$ and orient the components of
$\widetilde{\Sigma}$ as in Figure \ref{figure2}. This makes
$\widetilde{\Sigma}$ a complete contour, meaning that $\mathbb{C}
\setminus \widetilde{\Sigma}$ can be expressed as the union of two
disjoint sets, $\mathbb{C} \setminus \widetilde{\Sigma} = \Omega_+
\cup \Omega_-$, $\Omega_+ \cap \Omega_- = \emptyset $, such that
$\widetilde{\Sigma}$ is the positively oriented boundary of
$\Omega_+$ and the negatively oriented boundary of $\Omega_-$. Let
$\widetilde{\Sigma}_j = \Omega_j \cap \partial \mathbb{D}$.

\begin{figure}[t]
\input{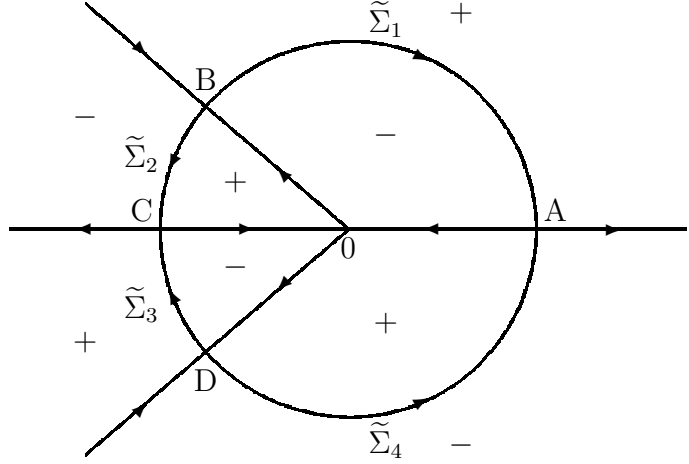}
\caption{\label{figure2} Contour for the RH problem for $m_{\alpha}$.}
\end{figure}

Computations show that $m_{\alpha}$ satisfies the
following normalized RH problem. As in Proposition \ref{Psiat0Prop}
we use $v_j$ to denote the jump matrix on $\Sigma_j$ in the model RH problem.

\paragraph{Riemann-Hilbert problem for $m_{\alpha}$}
\begin{itemize}
\item $m_{\alpha} : \mathbb{C} \setminus \widetilde{\Sigma} \to
    \mathbb C^{2\times 2}$ is analytic.
\item $m_{\alpha,+}(\zeta) = m_{\alpha,-}(\zeta) v(\zeta)$ for $\zeta \in \widetilde{\Sigma}$,
where\\
\[v(\zeta) = \left\{
    \begin{array}{ll}
    I, &\text{ for } \zeta \in \widetilde{\Sigma} \cap \mathbb{D},\\[10pt]
    \zeta^{-\sigma_3/4} \frac{1}{\sqrt{2}}
    \begin{pmatrix} 1 & i \\ i & 1 \end{pmatrix} e^{-\theta \sigma_3}
    v_j e^{\theta \sigma_3} \frac{1}{\sqrt{2}}
    \begin{pmatrix} 1 & -i \\ -i & 1 \end{pmatrix} \zeta^{\sigma_3/4},
    &\text{ for } \zeta \in \Sigma_j \cap \overline{\mathbb{D}}^c, j \in \{1,2,4\},\\[10pt]
    I, &\text{ for } \zeta \in \Sigma_3 \cap \overline{\mathbb{D}}^c,\\[10pt]
    \begin{pmatrix} \zeta^{\alpha} & \frac{\kappa_{\alpha}}{\pi} \zeta^{\alpha} \log \zeta \\
        0 & \zeta^{-\alpha} \end{pmatrix}
        A_j\,e^{\theta \sigma_3} \frac{1}{\sqrt{2}}
    \begin{pmatrix} 1 & -i \\ -i & 1 \end{pmatrix} \zeta^{\sigma_3/4},
    &\text{ for } \zeta \in \widetilde{\Sigma}_j, j \in \{1,3\},\\[10pt]
    \zeta^{-\sigma_3/4} \frac{1}{\sqrt{2}}
    \begin{pmatrix} 1 & i \\ i & 1 \end{pmatrix} e^{-\theta \sigma_3} A_j^{-1}
    \begin{pmatrix} \zeta^{-\alpha} & -\frac{\kappa_{\alpha}}{\pi} \zeta^{\alpha} \log \zeta \\
        0 & \zeta^{\alpha} \end{pmatrix},&
        \text{ for } \zeta \in \widetilde{\Sigma}_j, j \in \{2,4\}.
\end{array}
\right.\]
\item $m_{\alpha}(\zeta) = I + O(1/\zeta) \text{ as } \zeta \to \infty$.
\end{itemize}
The analyticity of $m_{\alpha}$ on $\Sigma_3 \cap
\overline{\mathbb{D}}^c$ follows since $\theta_+(\zeta)
+ \theta_-(\zeta) = 0$ for $\zeta < 0$.

It is important to note that $v(\zeta) - I$ decays
exponentially as $\zeta \rightarrow \infty$ along $\widetilde{\Sigma}$.
Next observe that, at any of the points $0,A,B,C,D$ of self-intersection
of $\widetilde{\Sigma}$ (see Figure \ref{figure2}), precisely four
contours come together. At a fixed point of self-intersection, say $P$,
order the contours that meet at $P$ counterclockwise, starting from
any contour that is oriented outwards from $P$.
Denoting the limiting value of the jump matrices over the $j$th contour
at $P$ by $v^{(j)}(P)$, we then have the cyclic relation
\begin{equation} \label{cyclic}
v^{(1)}(P) \left(v^{(2)}(P)\right)^{-1} v^{(3)}(P) \left(v^{(4)}(P)\right)^{-1} = I.
\end{equation}
This is trivial in case $P = 0$, and follows by direct computation
in the other cases.
We remark that the cyclic relation \eqref{cyclic} at $C$ is a consequence
of the relation \eqref{cyclicidentity1} in the case $\alpha - 1/2 \not\in \mathbb N_0$,
and of \eqref{cyclicidentity2} in the case $\alpha - 1/2 \in \mathbb N_0$
(see the proof of Proposition \ref{Psiat0Prop}).

Outside small neighborhoods of the points of self-intersection we choose the trivial
factorization $v_+ = v$, $v_- = I$ in \eqref{factorization1},
so that $\omega_+ = v - I$, $\omega_- = 0$ by \eqref{factorization2}.
Using the cyclic relations \eqref{cyclic}, we are then able to choose a factorization of $v$
in the remaining neighborhoods in such a way that $\omega_+$
is continuous along the boundary of each connected component of $\Omega_+$,
and similarly, $\omega_-$ is continuous along the boundary of each connected
component of $\Omega_-$.

The exponential decay of $v(\zeta) - I$ as $\zeta \rightarrow \infty$ ensures that
$\omega_{\pm} \in L^{2}(\widetilde{\Sigma}) \cap L^{\infty}(\widetilde{\Sigma})$.
From this it follows that $1 - C_{\omega}$ is Fredholm in $L^2(\widetilde{\Sigma})$.
Indeed, set
\begin{equation}
\widetilde{\omega}_- = I - v_-^{-1}, \quad
\widetilde{\omega}_+ = v_+^{-1} - I.
\end{equation}
The choice of $\widetilde{\omega} =
(\widetilde{\omega}_-, \widetilde{\omega}_+)$ is motivated by the relations
\begin{equation}
\label{omega-omegatilderelations}
\widetilde{\omega}_-\,\omega_- = \widetilde{\omega}_- + \omega_-, \quad
\widetilde{\omega}_+\,\omega_+ = -(\widetilde{\omega}_+ + \omega_+).
\end{equation}
A direct calculation, using $C_+ - C_- = 1$ and
\eqref{omega-omegatilderelations}, shows that
\begin{equation}
(1 - C_{\omega})(1 - C_{\widetilde{\omega}}) = 1 + T,
\end{equation}
where
\begin{equation}
Tf =
C_+((C_-[f(\widetilde{\omega}_+ + \widetilde{\omega}_-)])\,\omega_-)
+
C_-((C_+[f(\widetilde{\omega}_+ + \widetilde{\omega}_-)])\,\omega_+)
\end{equation}
for $f \in L^2(\widetilde{\Sigma})$. Standard computations, using continuity of the
functions $\omega_+$ resp. $\omega_-$ along the boundary of each connected component of $\Omega_+$ resp.
$\Omega_-$, show that $T$ is compact in $L^2(\widetilde{\Sigma})$. Similar computations show that
$(1 - C_{\widetilde{\omega}})(1 - C_{\omega}) = 1 + S$, with $S$ compact in $L^2(\widetilde{\Sigma})$.
So $1 - C_{\widetilde{\omega}}$ is a pseudoinverse for $1 - C_{\omega}$, which is
therefore Fredholm in $L^2(\widetilde{\Sigma})$.

It follows from general theory that the index of the operator $1 - C_{\omega}$ equals the
winding number of $\det v$ along $\widetilde{\Sigma}$, the latter being defined in the natural way.
Now, since $\det v \equiv 1$, this is trivially zero. This completes the proof
of Proposition \ref{ExistenceProp}.
\end{proof}

\begin{remark} The RH problem for $\Psi_{\alpha}^{(spec)}$ is indeed solvable for all
$s \in \mathbb{C} \setminus \mathcal{D}$, where $\mathcal{D}$ is a discrete set
in $\mathbb{C}$ (disjoint from $\mathbb{R}$ according to Proposition \ref{ExistenceProp}),
and the solution $\Psi_{\alpha}^{(spec)}$ is meromorphic in $s$ with poles in $\mathcal D$.
To see this, we first observe that the factorization \eqref{factorization1}, \eqref{factorization2}
can be done so that $\omega_{\pm}$ are both analytic in $s$. It follows that
$s \mapsto 1 - C_{\omega}$ is an analytic map taking values in the Fredholm operators
on $L^2(\widetilde{\Sigma})$.
Since we know that $1 - C_{\omega}$ is invertible for $s \in \mathbb R$, we then get,
by a version of the analytic Fredholm theorem \cite{Zhou},
that $\mu$ defined by \eqref{singinteq} is meromorphic.
Thus $m_{\alpha}$ and hence $\Psi_{\alpha}^{(spec)}$ is meromorphic in $s$.
\end{remark}

\subsection{Some preliminaries on equilibrium measures}
Before we embark on the steepest descent analysis for the RH problem
of Subsection \ref{FIKRHproblem}, we recall certain  properties of
equilibrium measures, see \cite{Deift,SaTo}.
We use the following notation:
\begin{equation}
    t = \frac{n}{N}, \quad V_t(x) = \frac{1}{t}\,V(x).
\end{equation}
As explained in the Introduction, we are interested in the case
where $n/N \to 1$ as $n,N \to \infty$, which means that
we are interested in $t$ close to $1$. For every $t$ we consider
the energy functional $I_{V_t}(\mu)$ as in \eqref{energyfunctional},
and its minimizer $\mu_t$.

The equilibrium measure $d\mu_t = \rho_t\,dx$ is characterized by
the following Euler-Lagrange variational conditions: There is a
constant $l_t \in \mathbb{R}$ such that
\begin{alignat}{2}
\label{variational1}
2 \int \log|x - s|\rho_t(s)\,ds - V_t(x) + l_t &= 0, \quad x \in \supp \mu_t,\\
\label{variational2}
2 \int \log|x - s|\rho_t(s)\,ds - V_t(x) + l_t & \leq 0, \quad x \in \mathbb{R} \setminus \supp \mu_t.
\end{alignat}

For $t=1$, we have that the support of $\mu_V$ consists of a finite union of
disjoint intervals, see \cite{DKM}, say
\[ \supp \mu_V = \bigcup_{j=1}^k [a_j,b_j] \]
with $a_1 < b_1 < a_2 < \cdots < a_k < b_k$.
Due to the assumption that the density $\rho_V$ of $\mu_V$ is regular,
we have the following proposition.

\begin{proposition}
\label{supp}
For every $t$ in an interval around $1$, we have that the density
$\rho_t$ of $\mu_t$ is regular, and that $\supp \mu_t$ consists of $k$
intervals, say
\[ \supp \mu_t = \bigcup_{j=1}^k [a_j(t),b_j(t)] \]
with $a_1(t) < b_1(t) < a_2(t) < \cdots < a_k(t) < b_k(t)$.
In this interval around $1$, the functions $t \mapsto a_j(t)$
and  $t \mapsto b_j(t)$ are real analytic with $a_j'(t) < 0$ and $b_j'(t) > 0$.
\end{proposition}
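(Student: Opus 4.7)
The plan is to realize Proposition~\ref{supp} as an application of the analytic implicit function theorem to the finite-dimensional system that characterizes the endpoints of a regular equilibrium measure with real-analytic external field.

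First I would invoke the standard representation of the density of a regular equilibrium measure (see e.g.\ \cite{DKM,SaTo}). Since $V$ is real analytic, on each interval $[a_j(t),b_j(t)]$ the density can be written
\[
    \rho_t(x) = \frac{1}{\pi}\,h_t(x)\,\sqrt{\prod_{i=1}^{k}(b_i(t)-x)(x-a_i(t))},
\]
where $h_t$ is a real-analytic function given by an explicit Cauchy-type contour integral of $V_t'=V'/t$ along a loop encircling the support. Substituting this form into the requirements that the resolvent $G_t(z)=\int \rho_t(s)(z-s)^{-1}\,ds$ behaves as $1/z+O(1/z^2)$ at infinity and that the constant $l_t$ in \eqref{variational1} takes the same value on each of the $k$ bands yields a system of $2k$ scalar equations in the $2k$ unknowns $(a_1,\ldots,a_k,b_1,\ldots,b_k)$, with all coefficients real analytic in $t$ through $V_t$.

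Second, the hypothesis that $\rho_V$ is regular is precisely what makes the Jacobian of this system at $t=1$ nondegenerate: the square-root vanishing of $\rho_V$ at each endpoint is equivalent to $h_1(a_j), h_1(b_j)>0$, and under this positivity the derivatives of the endpoint equations with respect to $(\mathbf{a},\mathbf{b})$ are easily seen to form an invertible matrix. The analytic implicit function theorem then produces unique real-analytic functions $t\mapsto a_j(t),b_j(t)$ on some neighborhood of $1$, with $a_j(1)=a_j$ and $b_j(1)=b_j$. The open conditions ``$h_t>0$ on $\supp\mu_t$'' and the strict inequality \eqref{variational2} off the support are preserved by continuity, so $\mu_t$ remains regular with support consisting of precisely $k$ bands.

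The final step is to establish the signs $a_j'(1)<0<b_j'(1)$. I would differentiate the variational equality \eqref{variational1} in $t$ and observe that $\partial_t V_t(x)=-V(x)/t^2$ enters as a smooth inhomogeneity, then deduce a monotonicity principle: as $t$ increases the external field $V_t$ weakens, so the equilibrium support must expand. More concretely, reading off $b_j'(t)$ as a residue at $b_j(t)$ of the endpoint equation produces $b_j'(t)\,h_t(b_j(t))$ equal to an explicit positive quantity, and since $h_t(b_j(t))>0$ the sign follows (symmetrically for $a_j'(t)$). I expect this last step to be the main obstacle: the implicit function theorem argument of the first two paragraphs is by now routine in equilibrium-measure theory, but the \emph{strict} monotonicity $a_j'<0<b_j'$ (rather than just $\neq 0$) requires either a careful maximum-principle / balayage argument or this explicit residue computation, which must be carried out with care to rule out accidental cancellations.
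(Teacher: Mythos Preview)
The paper does not actually prove this proposition: its entire proof reads ``See Theorem 1.3 (iii) and Lemma 8.1 of \cite{KM}.'' So there is no internal argument to compare against, only the cited reference of Kuijlaars--McLaughlin.

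Your sketch is a faithful reconstruction of the standard machinery behind that reference. The implicit-function-theorem step (endpoint equations from moment and gap conditions, with nondegenerate Jacobian forced by regularity) is exactly how real-analyticity of $t\mapsto(a_j(t),b_j(t))$ and persistence of the $k$-band structure are obtained in \cite{DKM,KM}. For the strict monotonicity, the paper itself confirms your ``residue computation'' in the one-interval case: equation \eqref{edgespeed1} gives
\[
\frac{d}{dt}b_t=\frac{4}{t(b_t-a_t)h_t(b_t)}>0,\qquad
\frac{d}{dt}a_t=-\frac{4}{t(b_t-a_t)h_t(a_t)}<0,
\]
which is precisely the positive quantity divided by $h_t$ at the endpoint that you anticipate. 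In the multi-interval case the argument in \cite{KM} is slightly different from what you outline: rather than computing each $b_j'(t)$ as a residue, one first proves by a potential-theoretic (balayage) argument that $\supp\mu_{t_1}\subset\supp\mu_{t_2}$ whenever $t_1<t_2$ (the Buyarov--Rakhmanov phenomenon), and then combines this set-theoretic monotonicity with the already-established real-analyticity of the endpoints to conclude that the derivatives cannot vanish, hence have the stated sign. Your ``maximum-principle / balayage'' alternative is thus exactly the route taken; the explicit residue formula is cleaner when $k=1$ but becomes cumbersome for general $k$.
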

\begin{proof}
See Theorem 1.3 (iii) and Lemma 8.1 of \cite{KM}.
\end{proof}

For the rest of the proof of Theorem \ref{theorem1} we shall assume
that $\supp \mu_V$ consists of one interval.
In the general case (when $\supp \mu_V$ consists of $k \geq 2$ intervals)
one proceeds analogously, but the parametrix away from the
end points given in Subsection \ref{ParametrixAtInfinity} must then instead
be constructed with the help of the $\theta$-function of $B$-periods for
the two-sheeted Riemann surface $y^2 = \Pi_{j=1}^k [(z-a_j)(z-b_j)]$
obtained by gluing together two copies of the slit plane
$\mathbb C \setminus \bigcup_{j=1}^k [a_j,b_j]$ in the standard way \cite{DKMVZ2,KV}.
Since the formulas will be more complicated in the multi-interval case, but
do not contribute to the main issue of the present paper,
we chose to give the proof in full for the one-interval case only.

\subsection{Steepest descent analysis}

\subsubsection{Preliminaries}
We assume from now on that $k=1$, so that $\supp \mu_V$ consists
of one interval  which we take as
\[ \supp(\mu_V) = [a,0], \qquad a < 0. \]
Then there is $\delta_1 > 0$ such that $\mu_t$ is supported
on one interval $[a_t,b_t]$ for every $t \in (1-\delta_1,1+\delta_1)$,
and its density $\rho_t$ is regular.
Hence $\rho_t$ is positive on $(a_t,b_t)$ and vanishes
like a square root at the end points, and it takes the form \cite{DKMVZ2}
\begin{equation} \label{htdef}
    \rho_t(x) = \frac{1}{2\pi} \sqrt{(b_t-x)(x-a_t)} \, h_t(x), \qquad
    \text{for } x\in [a_t,b_t],
\end{equation}
where $h_t$ is positive on $[a_t,b_t]$, and analytic in the domain
of analyticity of $V$. In addition, $h_t$ depends analytically on
$t \in (1-\delta_1,1+\delta_1)$.

We are going to use the equilibrium measure $\mu_t$ in the
first transformation of the RH problem.
We remark that in \cite{CK,CKV,CV,DK} a modified equilibrium measure
was used in the steepest descent analysis of a RH problem at a critical
point.
It is likely that we could have modified the equilibrium measure
in the present situation as well, but the approach with the
unmodified $\mu_t$ also works, as we will see, and we chose to use it
in this paper.

In the one-interval case one can show by explicit computation that
\begin{equation} \label{edgespeed1}
    \frac{d}{dt} a_t = -\frac{4}{t(b_t - a_t)h_t(a_t)},
    \qquad
    \frac{d}{dt} b_t = \frac{4}{t(b_t - a_t)h_t(b_t)},
\end{equation}
which indeed shows that $\frac{d}{dt} a_t < 0$ and
$\frac{d}{dt} b_t > 0$.
It follows that $b_t > 0$ for $t \in (1,1+\delta_1)$ and
$b_t < 0$ for $t \in (1-\delta_1,1)$.
In both cases we have $a_t < 0$.

We introduce two functions $\varphi_t$ and $\tilde{\varphi}_t$ as
follows. For $z \in \mathbb C \setminus (-\infty, b_t]$ lying in the domain
of analyticity of $V$ (which we may restrict to be simply connected,
without loss of generality), we put
\begin{equation} \label{varphi}
    \varphi_t(z) = \frac{1}{2}\,\int_{b_t}^{z}
    ((s-b_t)(s-a_t))^{1/2} h_t(s)\,ds,
\end{equation}
and for $z \in \mathbb C \setminus [a_t,\infty)$ also in the domain of
analyticity of $V$,
\begin{equation} \label{varphitilde}
    \tilde{\varphi}_t(z) = \frac{1}{2}\,\int_{a_t}^{z}
    ((s-b_t)(s-a_t))^{1/2}h_t(s)\,ds.
\end{equation}
It follows from \eqref{varphi} that
\begin{equation}
\varphi_t(z) = \frac{1}{3} \sqrt{-a_t}\,h_t(b_t) (z-b_t)^{3/2} \chi_t(z),
\end{equation}
where $\chi_t$ is analytic in a neighborhood of $b_t$ and $\chi_t(b_t) = 1$.
Taking
\begin{equation} \label{ftdef}
    f_t(z) = \left( \frac{3}{2} \varphi_t(z) \right)^{2/3} =
    \left(\frac{1}{2} \sqrt{-a_t}\,h_t(b_t)\right)^{2/3} (z - b_t)\,\chi_t^{2/3}(z),
\end{equation}
we see that $f_t$ is analytic in a neighborhood of $b_t$ with $f_t(b_t) = 0$,
\begin{equation} \label{ftprime}
f_t'(b_t) =
\left(\frac{1}{2}\,\sqrt{-a_t}\,h_t(b_t)\right)^{2/3} \neq 0,
\end{equation}
and $f_t(z)$ real for real values of $z$. Hence, in particular,
\begin{equation} \label{ftprop}
    f_t(0) > 0,  \text{ if } t < 1, \qquad
    f_1(0) = 0, \quad \text{ and } \quad
    f_t(0) < 0, \text{ if } t > 1.
\end{equation}
Moreover, $f_t \to f_1$ as $t \to 1$,
uniformly in a neighborhood of $0$.
We choose a small disc $U^{(0)}$ around $0$ and $\delta_2 > 0$
sufficiently small, so that $f_t$ is a conformal map from $U^{(0)}$ onto
a convex neighborhood of $0$ for every $t \in (1-\delta_2,1+\delta_2)$.

Similarly, there exists a disc $U^{(a)}$ centered at $a < 0$,
and a $\delta_3 > 0$, so that
\begin{equation} \label{tildeftdef}
    \tilde{f}_t(z) = \left( \frac{3}{2} \tilde{\varphi}_t(z)\right)^{2/3}
    \end{equation}
is a conformal map from $U^{(a)}$ onto a convex neighborhood of $0$
for every $t \in (1-\delta_3,1+\delta_3)$.

We let $\delta_0 = \min(\delta_1,\delta_2,\delta_3)$ and
we fix $t \in (1-\delta_0,1+\delta_0)$. In what follows
we also take the neighborhoods
$U^{(0)}$ and $U^{(a)}$ as above.

\subsubsection{First transformation $Y \mapsto T$}
We introduce the so-called $g$-function:
\begin{equation} \label{gtdef}
g_t(z) = \int \log(z - s)\,d\mu_t(s) = \int \log(z -
s)\,\rho_t(s)\,ds, \quad z \in \mathbb{C} \setminus (-\infty, b_t],
\end{equation}
where $\log$ denotes the principal branch. Then $g_t$ is analytic
in $\mathbb{C} \setminus (-\infty, b_t]$.
Define $T$ by
\begin{equation} \label{Tdef}
    T(z) = e^{\frac{1}{2} n l_t \sigma_3}\,Y(z)\,e^{-\frac{1}{2} n l_t \sigma_3}\,e^{-n g_t(z)
     \sigma_3}, \quad z \in \mathbb{C} \setminus \mathbb{R},
\end{equation}
where $l_t$ is the constant from
\eqref{variational1}--\eqref{variational2}. By a straightforward
calculation it then follows that $T$ has the following jump matrix
$v_T$ on $\mathbb{R}$ (oriented from left to right):
\begin{equation}
v_T(x) = \begin{pmatrix}
        e^{-n(g_{t,+}(x) - g_{t,-}(x))} & |x|^{2\alpha}\,e^{n(g_{t,+}(x) + g_{t,-}(x) - V_t(x) + l_t)}\\
        0 & e^{n(g_{t,+}(x) - g_{t,-}(x))}
      \end{pmatrix}.
\end{equation}
Because of the identities, see \cite{Deift,DKMVZ2},
\begin{alignat}{2} \label{g-identity1}
    g_{t,+}(x) + g_{t,-}(x) - V_t(x) + l_t & = -2\varphi_t(x), \quad \text{ for } x > b_t, \\
    \label{g-identity2}
    g_{t,+}(x) + g_{t,-}(x) - V_t(x) + l_t & = -2\tilde{\varphi}_t(x), \quad \text{ for } x < a_t,
\end{alignat}
we see that the RH problem for $T$ is the following.

\paragraph{Riemann-Hilbert problem for $T$}
\begin{itemize}
\item $T : \mathbb C \setminus \mathbb R \to
    \mathbb C^{2\times 2}$ is analytic.
\item $T_+(x) = T_-(x)\,v_T(x)$ for $x \in \mathbb R$, with
\[ v_T(x) =\left\{ \begin{array}{ll}
             \begin{pmatrix}
                 1 & |x|^{2\alpha}\,e^{-2 n \tilde{\varphi}_t(x)} \\
                 0 & 1
             \end{pmatrix},&\text{ for } x < a_t,
             \\[10pt]
             \begin{pmatrix}
                 e^{2 n\varphi_{t,+}(x)} & |x|^{2\alpha} \\
                 0 & e^{2 n \varphi_{t,-}(x)}
             \end{pmatrix},&\text{ for } x \in (a_t, b_t),
             \\[10pt]
             \begin{pmatrix}
                 1 & |x|^{2\alpha}\,e^{-2 n \varphi_t(x)}\\
                 0 & 1
             \end{pmatrix},&\text{ for } x > b_t.
\end{array}
\right. \]
\item $T(z) = I + O(1/z)$ as $z \to \infty$.
\item If $\alpha < 0$, then $T(z) = O\left(\begin{smallmatrix} 1 & |z|^{2\alpha} \\ 1 & |z|^{2\alpha}
\end{smallmatrix}\right)$ as $z \to 0$. If $\alpha \geq 0$,
then $T(z) = O\left(\begin{smallmatrix} 1 & 1 \\ 1 & 1
\end{smallmatrix}\right)$ as $z \to 0$.
\end{itemize}

\subsubsection{Second transformation $T \mapsto S$}
The opening of lenses is based on the following factorization of
$v_T$ on $(a_t, b_t)$:
\begin{alignat*}{2}
v_T(x) &= \begin{pmatrix}
                 e^{2 n \varphi_{t,+}(x)} & |x|^{2\alpha} \\
                 0 & e^{2 n \varphi_{t,-}(x)}
               \end{pmatrix}\\[10pt]
           &=\begin{pmatrix}
                 1 & 0 \\
                 |x|^{-2\alpha}\,e^{2 n \varphi_{t,-}(x)} & 1
               \end{pmatrix}
             \begin{pmatrix}
                 0 & |x|^{2\alpha} \\
                 -|x|^{-2\alpha} & 0
               \end{pmatrix}
             \begin{pmatrix}
                 1 & 0 \\
                 |x|^{-2\alpha}\,e^{2 n \varphi_{t,+}(x)} & 1
             \end{pmatrix}.
\end{alignat*}

Introduce a lens around the segment $[a_t, 0]$ as in Figure \ref{figure3}
(recall that $a_t < 0$).
In the disc $U^{(0)}$ around $0$ we take the lens such that
$z \mapsto \zeta = f_t(z) - f_t(0)$, see \eqref{ftdef}, maps the parts of the
upper and lower lips of
the lens that are in $U^{(0)}$ into the rays $\arg \zeta = 2\pi/3$ and $\arg \zeta = -2\pi/3$,
respectively.
Similarly, in the disc $U^{(a)}$ we choose the lens so that
$z \mapsto \zeta = \tilde{f}_t(z)$, see \eqref{tildeftdef},
maps the parts of the upper and lower lips of the lens that are in $U^{(a)}$ into the rays
$\arg \zeta = \pi/3$, and $\arg \zeta = -\pi/3$, respectively.
The remaining parts of the lips of the lens are arbitrary. However,
they should be contained in the domain of analyticity of $V$, and we
take them so that
\[ \Re \varphi_t(z) < -c < 0 \]
for $z$ on the lips of the lens outside $U^{(0)}$ and $U^{(a)}$,
with $c > 0$ independent of $t$.

It is important to note that the lens is around $[a_t,0]$, and not
around $[a_t,b_t]$.

\begin{figure}[t]
\ifx\JPicScale\undefined\def\JPicScale{1}\fi
\unitlength \JPicScale mm
\begin{picture}(80,20)(-30,20)
\linethickness{0.3mm} \put(10,30){\line(1,0){70}}

\linethickness{0.3mm}
\qbezier(25.00,30.00)(45.00,50.00)(65.00,30.00)

\linethickness{0.3mm}
\qbezier(25.00,30.00)(45.00,10.00)(65.00,30.00)

\thicklines \put(70,30){\line(1,0){2.5}}
\put(72.5,30){\vector(1,0){0.12}} \thicklines
\put(15,30){\line(1,0){2.5}} \put(17.5,30){\vector(1,0){0.12}}
\thicklines \put(43.75,30){\line(1,0){2.5}}
\put(46.25,30){\vector(1,0){0.12}} \thicklines
\put(45,40){\line(1,0){1.25}} \put(46.25,40){\vector(1,0){0.12}}
\thicklines \put(45,20){\line(1,0){1.25}}
\put(46.25,20){\vector(1,0){0.12}}
\put(23.5,27.75){\makebox(0,0)[cc]{$a_t$}}

\put(66.25,27.75){\makebox(0,0)[cc]{$0$}}

\end{picture}
\caption{\label{figure3} Opening of a lens around $[a_t,0]$.}
\end{figure}
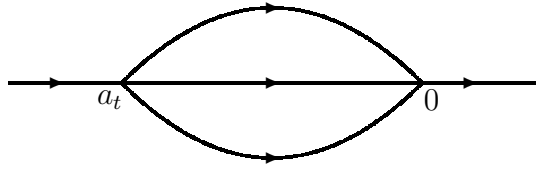

Define $S$ by
\begin{equation} \label{Sdef}
S(z) = \left\{
\begin{array}{ll}
  T(z),&\text{ for } z \text{ outside the lens},
        \\
  T(z) \begin{pmatrix}
           1 & 0 \\
           (-z)^{-2\alpha}\,e^{2 n \varphi_t(z)} & 1
       \end{pmatrix}^{-1},&\text{ for } z \text{ in the upper part of the lens},
        \\[10pt]
  T(z) \begin{pmatrix}
           1 & 0 \\
           (-z)^{-2\alpha}\,e^{2 n \varphi_t(z)} & 1
       \end{pmatrix},&\text{ for } z \text{ in the lower part of the lens}.
\end{array}
        \right.
\end{equation}
Here the map $z \mapsto (-z)^{-2\alpha}$ is defined with a cut
along the positive real axis.
Then, from \eqref{Sdef} and the RH problem for $T$, we find that
$S$ is the unique solution of the following RH problem.
\paragraph{Riemann-Hilbert problem for $S$}
\begin{itemize}
\item $S : \mathbb C \setminus \Sigma_S \to
    \mathbb C^{2\times 2}$ is analytic, where $\Sigma_S$ consists of
    the real line and the upper and lower lips of the lens, with orientation as
    in Figure \ref{figure3}.
\item $S_+(z) = S_-(z)\,v_S(z)$ for $z \in \Sigma_S$, where $v_S$ is given as follows.
    For $t < 1$, so that $b_t < 0$, we have
    \[ v_S(z) =\left\{ \begin{array}{ll}
             \begin{pmatrix}
                 1 & |z|^{2\alpha}\,e^{-2 n \tilde{\varphi}_t(z)} \\
                 0 & 1
             \end{pmatrix},&\text{ for } z \in (-\infty, a_t),
              \\[10pt]
             \begin{pmatrix}
                 0 & |z|^{2\alpha} \\
                 -|z|^{-2\alpha} & 0
             \end{pmatrix},&\text{ for } z \in (a_t, b_t),
              \\[10pt]
             \begin{pmatrix}
                 0 & |z|^{2\alpha}\,e^{-2 n \varphi_t(z)} \\
                 -|z|^{-2\alpha}\,e^{2 n \varphi_t(z)} & 0
             \end{pmatrix},&\text{ for } z \in (b_t, 0),
              \\[10pt]
             \begin{pmatrix}
                 1 & |z|^{2\alpha}\,e^{-2 n \varphi_t(z)}\\
                 0 & 1
             \end{pmatrix},&\text{ for } z \in (0,\infty),
             \\[10pt]
             \begin{pmatrix}
                 1 & 0 \\
                 (-z)^{-2\alpha}\,e^{2 n \varphi_t(z)}& 1
             \end{pmatrix},&\text{ for } z \text{ on both lips of the lens},
\end{array}
\right. \]
while, for $t \geq 1$, so that $b_t \geq 0$, we have
\[ v_S(z) =\left\{ \begin{array}{ll}
             \begin{pmatrix}
                 1 & |z|^{2\alpha}\,e^{-2 n \tilde{\varphi}_t(z)} \\
                 0 & 1
            \end{pmatrix},&\text{ for } z \in (-\infty, a_t),
              \\[10pt]
             \begin{pmatrix}
                 0 & |z|^{2\alpha} \\
                 -|z|^{-2\alpha} & 0
              \end{pmatrix},&\text{ for } z \in (a_t,0),
              \\[10pt]
             \begin{pmatrix}
                 e^{2 n\varphi_{t,+}(z)} & |z|^{2\alpha} \\
                 0 & e^{2 n \varphi_{t,-}(z)}
             \end{pmatrix},&\text{ for } z \in (0, b_t),
              \\[10pt]
             \begin{pmatrix}
                 1 & |z|^{2\alpha}\,e^{-2 n \varphi_t(z)}\\
                 0 & 1
             \end{pmatrix},&\text{ for } z \in (b_t,\infty),
              \\[10pt]
             \begin{pmatrix}
                 1 & 0 \\
                 (-z)^{-2\alpha}\,e^{2 n \varphi_t(z)}& 1
             \end{pmatrix},&\text{ for } z \text{ on both lips of the lens}.
\end{array}
\right. \]
\item $S(z) = I + O(1/z)$ as $z \to \infty$.
\item If $\alpha < 0$, then $S(z) = O\left(\begin{smallmatrix} 1 & |z|^{2\alpha} \\ 1 & |z|^{2\alpha}
\end{smallmatrix}\right)$ as $z \to 0$.
If $\alpha \geq 0$, then $S(z) = O\left(\begin{smallmatrix} 1 & 1 \\ 1 & 1
\end{smallmatrix}\right)$ as $z \to 0$ from outside the lens and $S(z) = O
\left(\begin{smallmatrix}
    |z|^{-2\alpha} & 1 \\
    |z|^{-2\alpha} & 1
\end{smallmatrix}\right)$ as $z \to 0$ from inside the lens.
\end{itemize}

The next step is to approximate $S$ by a parametrix $P$, consisting of three
parts $P^{(\infty)}$, $P^{(a)}$, and $P^{(0)}$:
\begin{equation} \label{Pdef}
P(z) = \left\{
\begin{array}{ll}
  P^{(0)}(z),&\text{ for } z \in U^{(0)} \setminus \Sigma_S,\\
  P^{(a)}(z),&\text{ for } z \in U^{(a)} \setminus \Sigma_S, \\
  P^{(\infty)}(z),&\text{ for } z \in \mathbb{C} \setminus
     (\overline{U^{(0)} \cup U^{(a)} \cup (a_t,0)}),
\end{array}
\right.
\end{equation}
where $U^{(a)}$ and $U^{(0)}$ are small discs centered at $a$ and $0$, respectively,
that have been introduced before.
The parametrices $P^{(\infty)}$, $P^{(a)}$ and $P^{(0)}$ are constructed below.

\subsubsection{The parametrix $P^{(\infty)}$}
\label{ParametrixAtInfinity}

The parametrix $P^{(\infty)}$ is a solution of the following RH problem.

\paragraph{Riemann-Hilbert problem for $P^{(\infty)}$}
\begin{itemize}
\item $P^{(\infty)} : \mathbb C \setminus [a_t,0] \to
    \mathbb C^{2\times 2}$ is analytic.
\item $P^{(\infty)}_+(x) = P^{(\infty)}_-(x)\, \left(
    \begin{array}{cc}
      0 & |x|^{2\alpha} \\
      -|x|^{-2\alpha} & 0 \\
    \end{array}
  \right)$  for $x \in (a_t, 0)$, oriented from left to right.
\item $P^{(\infty)}(z) = I + O(1/z)$ as $z \to \infty$.
\end{itemize}

The RH problem for $P^{(\infty)}$ can be explicitly solved as in \cite{CKV}.
Take
\begin{equation}
D(z) = z^{\alpha}\,\phi \left(\frac{2z -
a_t}{-a_t}\right)^{-\alpha}, \quad \text{for } z \in \mathbb{C}
\setminus [a_t, 0],
\end{equation}
where $\phi(z) = z + (z-1)^{1/2}\,(z+1)^{1/2}$ is the conformal map
from $\mathbb{C} \setminus [-1,1]$ onto the exterior of the unit
circle. Then $D_+(x)\, D_-(x) = |x|^{2\alpha}$ for $x \in (a_t, 0)$.
It follows that $D(\infty)^{-\sigma_3}\,P^{(\infty)}(z)\,D(z)^{\sigma_3}$
satisfies the normalized RH problem with jump matrix
$\left(\begin{smallmatrix} 0 & 1 \\ -1 & 0 \end{smallmatrix}\right)$ on $(a_t, 0)$
(oriented from left to right), whose solution is well-known, see e.g.\ \cite{Deift,DKMVZ2},
and it leads to
\begin{equation}
    P^{(\infty)}(z) = D(\infty)^{\sigma_3}
    \left(
      \begin{array}{cc}
         \frac{1}{2} \left(\beta_t(z) + \beta_t(z)^{-1}\right) & \frac{1}{2i} \left(\beta_t(z) - \beta_t(z)^{-1}\right) \\[5pt]
         -\frac{1}{2i} \left(\beta_t(z) - \beta_t(z)^{-1}\right) &  \frac{1}{2}\left(\beta_t(z) + \beta_t(z)^{-1}\right)
      \end{array}
    \right)
    D(z)^{-\sigma_3},
\end{equation}
for $z \in \mathbb C \setminus [a_t,0]$, where
\begin{equation} \beta_t(z) = \left(\frac{z}{z - a_t}\right)^{1/4},
\quad \text{for } z \in \mathbb{C} \setminus [a_t,0].
\end{equation}

\subsubsection{The parametrix $P^{(a)}$}

The parametrix $P^{(a)}$ is defined in the disc $U^{(a)}$ around $a$,
where $P^{(a)}$ satisfies the following RH problem.

\paragraph{Riemann-Hilbert problem for $P^{(a)}$}
\begin{itemize}
\item $P^{(a)} : U^{(a)} \setminus \Sigma_S \to
    \mathbb C^{2\times 2}$ is analytic.
\item $P^{(a)}_+(z) = P^{(a)}_-(z)\,v_S(z)$ for $z \in U^{(a)} \cap \Sigma_S$.
\item $P^{(a)}(z)\,\left(P^{(\infty)}(z)\right)^{-1} = I +
O(n^{-1})$, as $n \to \infty$, uniformly for $z \in \partial U^{(a)} \setminus \Sigma_S$.
\end{itemize}
We seek $P^{(a)}$ in the form
\begin{equation*} \label{Pahat}
    P^{(a)}(z) = \widehat{P}^{(a)}(z) \,e^{n \tilde{\varphi}_t(z) \sigma_3}
    \,(-z)^{-\alpha \sigma_3},
    \quad \text{ for } z \in U^{(a)} \setminus \Sigma_S,
\end{equation*}
where $(-z)^{-\alpha}$ is defined with a branch cut along $[0,\infty)$.
Then $\widehat{P}^{(a)}$  satisfies a RH problem with constant jumps
and can be constructed in terms of the Airy function in a standard way;
for more details see the presentation in \cite{Deift}.

\subsubsection{The parametrix $P^{(0)}$}

The parametrix $P^{(0)}$, defined in the disk $U^{(0)}$ around $0$,
should satisfy the following RH problem.

\paragraph{Riemann-Hilbert problem for $P^{(0)}$}
\begin{itemize}
\item $P^{(0)} : \overline{U^{(0)}} \setminus \Sigma_S \to
    \mathbb C^{2\times 2}$ is continuous and analytic on $U^{(0)} \setminus \Sigma_S$.
\item $P^{(0)}_+(z) = P^{(0)}_-(z)\,v_S(z)$ for $z \in \Sigma_S \cap U^{(0)}$
(with the same orientation as $\Sigma_S$).
\item $P^{(0)}(z)\,\left(P^{(\infty)}(z)\right)^{-1} = I +
O(n^{-1/3})$, as $n \to \infty$, $t \to 1$ such that $n^{2/3}(t - 1) = O(1)$,
uniformly for $z \in \partial U^{(0)} \setminus \Sigma_S$.
\item $P^{(0)}$ has the same behavior near $0$ as $S$ has (see the RH problem for $S$).
\end{itemize}
A parametrix $P^{(0)}$ with these properties can be constructed
using a solution $\Psi_{\alpha}$ of the model
RH problem of Subsection \ref{modelRHP1}. The construction is done
in three steps.

\paragraph{Step 1: Transformation to constant jumps.}

We seek $P^{(0)}$ in the form
\begin{equation} \label{P0hat}
    P^{(0)}(z) = \widehat{P}^{(0)}(z) \,e^{n \varphi_t(z) \sigma_3}
    \,z^{-\alpha \sigma_3},
    \quad \text{ for } z \in \overline{U^{(0)}} \setminus \Sigma_S,
\end{equation}
where as usual $z^{-\alpha}$ denotes the principal branch. It then follows
from the RH problem for $P^{(0)}$ that
 $\widehat{P}^{(0)}$ should satisfy the following RH problem.

\paragraph{Riemann-Hilbert problem for $\widehat{P}^{(0)}$}
\begin{itemize}
\item $\widehat{P}^{(0)} : \overline{U^{(0)}} \setminus \Sigma_S \to
    \mathbb C^{2\times 2}$ is continuous and analytic on
    $U^{(0)} \setminus \Sigma_S$.
\item For $z \in \Sigma_S \cap U^{(0)}$, we have
\[ \widehat{P}^{(0)}_+(z) = \widehat{P}^{(0)}_-(z) \times
    \left\{ \begin{array}{ll}
    \begin{pmatrix}
        1 & 1 \\
        0 & 1
    \end{pmatrix},&\text{ for } z \in (0,\infty) \cap U^{(0)},\\[10pt]
    \begin{pmatrix}
        1 & 0 \\
        e^{2 \alpha \pi i} & 1
    \end{pmatrix},&\text{ for } z \text{ in } U^{(0)} \text{ on the upper lip of the lens},
    \\[10pt]
    \begin{pmatrix}
        0 & 1 \\
        -1 & 0
    \end{pmatrix},&\text{ for } z \in (-\infty,0) \cap U^{(0)},
    \\[10pt]
    \begin{pmatrix}
        1 & 0 \\
        e^{-2 \alpha \pi i} & 1
    \end{pmatrix},&\text{ for } z \text{ in } U^{(0)} \text{ on the lower lip of the lens}.
\end{array}
\right. \]
uniformly for $z \in \partial U^{(0)} \setminus \Sigma_S$.
\item If $\alpha < 0$, then
    \[ \widehat{P}^{(0)}(z) =  O \begin{pmatrix}
      |z|^{\alpha} & |z|^{\alpha} \\
      |z|^{\alpha} & |z|^{\alpha}
      \end{pmatrix} \quad \text{ as $z \to 0$}, \]
while if $\alpha \geq 0$ we have that
\begin{align*} \widehat{P}^{(0)}(z) & = O \begin{pmatrix}
    |z|^{\alpha} & |z|^{-\alpha} \\
    |z|^{\alpha} & |z|^{-\alpha}
  \end{pmatrix} \quad \text{ as   $z \to 0$ from outside the lens, and} \\
    \widehat{P}^{(0)}(z) & = O \begin{pmatrix}
    |z|^{-\alpha} & |z|^{-\alpha} \\
    |z|^{-\alpha} & |z|^{-\alpha}
  \end{pmatrix} \quad \text{ as  $z \to 0$ from inside the lens}.
  \end{align*}
\end{itemize}
Note that the jump matrices of $\widehat{P}^{(0)}$ do not depend on $t$.

The reader may note the similarities between the
above RH problem for $\widehat{P}^{(0)}$ and the RH problem for
$\Psi_{\alpha}$ from Subsection \ref{modelRHP1}. In the next step we show how
we can use $\Psi_{\alpha}$ to construct a solution of the RH problem for
$\widehat{P}^{(0)}$.

\paragraph{Step 2: The construction of $\widehat P^{(0)}$ in terms of $\Psi_{\alpha}$.}

Recall that $\Sigma_S$ in $U^{(0)}$ was taken such that $z \mapsto
f_t(z) - f_t(0)$ maps $\Sigma_S \cap U^{(0)}$ onto a subset of $\Sigma$,
where  $\Sigma$ is the contour in the RH problem
for $\Psi_{\alpha}$, see Subsection \ref{modelRHP1}.

We choose any solution $\Psi_{\alpha}$ of the model RH problem and
we define $\widehat{P}^{(0)}$ by
\begin{equation} \label{P0hat2}
    \widehat{P}^{(0)}(z) =
    E(z)\,\Psi_{\alpha}\left(
    n^{\frac{2}{3}}(f_t(z)-f_t(0));
    n^{\frac{2}{3}} f_t(0) \right),
    \quad \text{ for } z \in \overline{U^{(0)}} \setminus \Sigma_S,
\end{equation}
where $E = E_{n,N}$ is analytic in $U^{(0)}$.
Taking $P^{(0)}$ as in \eqref{P0hat} with $\widehat P^{(0)}$ as
in \eqref{P0hat2} we find that all the conditions of the RH problem
for $P^{(0)}$ are satisfied, except for the matching condition
\begin{equation} \label{matching}
    P^{(0)}(z)\,\left(P^{(\infty)}(z)\right)^{-1} = I + O(n^{-1/3}),
    \end{equation}
as $n \to \infty$, $t \to 1$ such that $n^{2/3}(t - 1) = O(1)$.

\paragraph{Step 3: Matching condition.}

To be able to satisfy \eqref{matching} we have to take $E$
in the following way
\begin{equation} \label{Edef}
    E(z) = P^{(\infty)}(z)\,z^{\alpha  \sigma_3}\,
    \frac{1}{\sqrt{2}} \begin{pmatrix}
    1 & -i \\
    -i & 1
  \end{pmatrix}
  \left(n^{\frac{2}{3}}(f_t(z) - f_t(0)) \right)^{\sigma_3/4}, \quad
    \text{ for } z \in U^{(0)} \setminus [a_t,0],
\end{equation}
where both branches are taken as principal. Clearly then
$E$ is analytic in $U^{(0)} \setminus [a_t,0]$.
It turns out that $E$ has analytic continuation to $U^{(0)}$.
This follows by direct calculation, but it relies on the fact that
we chose $[a_t,0]$ as the jump contour for $P^{(\infty)}$.

With the  choice \eqref{Edef} for $E$, we now show that \eqref{matching}
is satisfied as well. By \eqref{P0hat},
\eqref{P0hat2}, we have for $z \in
\partial U^{(0)} \setminus \Sigma_S$,
\begin{alignat*}{2}
    P^{(0)}(z) & =
    E(z)\,\Psi_{\alpha}\left(n^{\frac{2}{3}}(f_t(z) - f_t(0));
    n^{\frac{2}{3}}f_t(0)\right) e^{n\varphi_t\,\sigma_3}\,z^{-\alpha
    \sigma_3}
\end{alignat*}
and we are interested in the behavior as $n \to
\infty$, $t \to 1$ such that $n^{2/3}(t -1) = O(1)$.

We show first that $n^{2/3} f_t(0)$ remains bounded.
\begin{lemma} \label{matchinglemma1}
    Suppose $n \to \infty$, $t \to 1$ such that $n^{2/3} (t-1) = O(1)$.
    Then $n^{2/3} f_t(0)$ remains bounded.
    More precisely, if $n^{2/3} (t-1) \to L \in \mathbb R$,
    then
    \begin{equation} \label{c2Vprop}
         n^{2/3} f_t(0) \to - c_{2,V} L = s,
        \end{equation}
    where
    \begin{equation} \label{c2Vdef}
        c_{2,V} = (c_{1,V})^{2/3} \frac{db_t}{dt} \bigg|_{t=1}
    \end{equation}
    and $c_{1,V}$ is the constant in \eqref{c1def}.
\end{lemma}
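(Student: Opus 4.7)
The plan is to prove the lemma by a first-order Taylor expansion of $f_t(0)$ in $t$ about $t=1$, combined with an identification of the derivative in terms of the constants $c_{1,V}$ and $b_t'(1)$.

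First, I would use the explicit factorization in \eqref{ftdef}, namely
\[
    f_t(z) = \left(\tfrac{1}{2}\sqrt{-a_t}\,h_t(b_t)\right)^{2/3}(z-b_t)\,\chi_t^{2/3}(z),
\]
with $\chi_t$ analytic in a neighborhood of $b_t$ and $\chi_t(b_t)=1$. By Proposition \ref{supp} the endpoints $a_t,b_t$ are real analytic in $t$, and the same is true of $h_t$ by analytic dependence of the equilibrium measure. Hence $t \mapsto f_t(0)$ is real analytic near $t=1$. Since $b_1 = 0$ we have $f_1(0)=0$, so Taylor expansion gives
\[
    f_t(0) = (t-1)\,\frac{d}{dt}\Big|_{t=1} f_t(0) + O\big((t-1)^2\big).
\]
Multiplying by $n^{2/3}$ and using the hypothesis $n^{2/3}(t-1) \to L$, the error term is $O(n^{2/3}(t-1)^2) = O(t-1) = o(1)$. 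So it suffices to compute the derivative at $t=1$.

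Second, I would differentiate the factored expression. Since the prefactor $(\tfrac{1}{2}\sqrt{-a_t}\,h_t(b_t))^{2/3}$ and $\chi_t^{2/3}(0)$ are both bounded and nonzero near $t=1$, and since the factor $(0-b_t)$ vanishes at $t=1$, the only surviving contribution comes from differentiating $-b_t$. Using $\chi_1(0)=\chi_1(b_1)=1$, this gives
\[
    \frac{d}{dt}\Big|_{t=1} f_t(0) = -\left(\tfrac{1}{2}\sqrt{-a}\,h_1(0)\right)^{2/3} b_t'(1).
\]
The remaining task is to identify the prefactor with $c_{1,V}^{2/3}$. From \eqref{htdef} at $t=1$ we have $\rho_V(x) = \frac{1}{2\pi}\sqrt{(-x)(x-a)}\,h_1(x)$ on $[a,0]$, and letting $x\to 0^-$ yields
\[
    \rho_V(x) \sim \frac{1}{2\pi}\sqrt{-a}\,h_1(0)\,|x|^{1/2}.
\]
Comparing with \eqref{c1def} gives $c_{1,V} = \tfrac{1}{2}\sqrt{-a}\,h_1(0)$, so the prefactor is exactly $c_{1,V}^{2/3}$.

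Combining these two steps yields $n^{2/3} f_t(0) \to -c_{1,V}^{2/3}\,b_t'(1)\,L$, which is \eqref{c2Vprop} with $c_{2,V}$ as in \eqref{c2Vdef}; positivity of $c_{2,V}$ is immediate from $b_t'(1) > 0$ (Proposition \ref{supp}) and $c_{1,V}>0$. I do not expect a serious obstacle here: the only point that requires care is justifying the analytic (in particular $C^1$) dependence of $f_t$ on $t$, which one reads off from the real analytic dependence of the endpoints and of $h_t$, together with the fact that the square-root-type singularity has been absorbed into the algebraic factor in \eqref{ftdef}.
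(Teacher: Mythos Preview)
Your proof is correct and follows essentially the same route as the paper: evaluate the factored form \eqref{ftdef} at $z=0$, Taylor expand in $t$ about $t=1$ using $b_1=0$ and $\chi_1(0)=1$, and identify the prefactor $\tfrac{1}{2}\sqrt{-a}\,h_1(0)$ with $c_{1,V}$ via \eqref{htdef} and \eqref{c1def}. The only difference is that you spell out the analytic dependence on $t$ a bit more explicitly than the paper does.
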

\begin{proof}
It follows from \eqref{ftdef},  that
\begin{align*}
    f_t(0) & = \left(\frac{1}{2} \sqrt{-a_t} h_t(b_t)\right)^{2/3} (-b_t) \chi_t^{2/3}(0) \\
    & = - \left( \frac{1}{2} \sqrt{-a} h_1(0) \right)^{2/3} (t-1) \frac{db_t}{dt} \bigg|_{t=1}
        + O((t-1)^2) \qquad \mbox{ as } t \to 1.
\end{align*}
By \eqref{c1def} and \eqref{htdef}, we have
\begin{equation} \label{c1Vformula}
    c_{1,V} = \frac{1}{2} \sqrt{-a} h_1(0),
    \end{equation}
so that \eqref{c2Vprop}--\eqref{c2Vdef} indeed follows if $n^{2/3}
(t-1) \to L$.
\end{proof}
If we use the formula \eqref{edgespeed1}
for the $t$-derivative of $b_t$ at $t=1$, then we find from
\eqref{c2Vdef} that
\begin{equation} \label{c2Vformula}
    c_{2,V} = 2 (-a)^{-1/2}\,c_{1,V}^{-1/3}.
    \end{equation}

Now we continue with the proof of \eqref{matching}.

\begin{lemma} \label{matchinglemma2}
Suppose that $n \to \infty$, $t \to 1$ such that $n^{2/3} (t-1) = O(1)$.
Then \eqref{matching} holds.
\end{lemma}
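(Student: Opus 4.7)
The plan is to substitute the definitions \eqref{P0hat}, \eqref{P0hat2}, and \eqref{Edef} into the product $P^{(0)}(z)\bigl(P^{(\infty)}(z)\bigr)^{-1}$ and then invoke the large-$\zeta$ asymptotic condition (c) of the model RH problem. On $\partial U^{(0)}$ the argument $\zeta = n^{2/3}(f_t(z) - f_t(0))$ of $\Psi_{\alpha}$ has modulus of order $n^{2/3}$, since $f_t$ is conformal on $U^{(0)}$ with $f_t(0)\to 0$, so the factor $I + O(\zeta^{-1/2})$ in (c) becomes $I + O(n^{-1/3})$. The prefactor $\zeta^{-\sigma_3/4}\frac{1}{\sqrt{2}}\bigl(\begin{smallmatrix}1 & i\\ i & 1\end{smallmatrix}\bigr)$ in (c) is exactly cancelled by the corresponding factor built into $E(z)$ via \eqref{Edef} (using $\frac{1}{\sqrt 2}\bigl(\begin{smallmatrix}1 & -i\\ -i & 1\end{smallmatrix}\bigr)\frac{1}{\sqrt 2}\bigl(\begin{smallmatrix}1 & i\\ i & 1\end{smallmatrix}\bigr) = I$), producing
\[
P^{(0)}(z)\bigl(P^{(\infty)}(z)\bigr)^{-1} = P^{(\infty)}(z)\,z^{\alpha\sigma_3}\bigl(I + O(n^{-1/3})\bigr) e^{\Delta(z)\sigma_3}\,z^{-\alpha\sigma_3}\bigl(P^{(\infty)}(z)\bigr)^{-1},
\]
where $\Delta(z) := n\varphi_t(z) - \tfrac{2}{3}\zeta^{3/2} - s\zeta^{1/2}$.

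The core of the proof is then to show $\Delta(z) = O(n^{-1/3})$ uniformly on $\partial U^{(0)}$. Setting $w = f_t(z) - f_t(0)$ and $a = f_t(0)$, the identity $\varphi_t(z) = \tfrac{2}{3}f_t(z)^{3/2}$ that follows from \eqref{ftdef} gives $\varphi_t = \tfrac{2}{3}(w+a)^{3/2}$, and a binomial expansion yields
\[
\tfrac{2}{3}(w+a)^{3/2} = \tfrac{2}{3}w^{3/2} + a w^{1/2} + \tfrac{1}{4}a^2 w^{-1/2} + O(a^3 w^{-3/2}),
\]
uniformly for $w$ in a compact set bounded away from $0$ and $a\to 0$. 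After multiplication by $n$ and the substitutions $\zeta = n^{2/3} w$, $s = n^{2/3} a$, the first two terms reproduce exactly $\tfrac{2}{3}\zeta^{3/2} + s\zeta^{1/2}$, leaving
\[
\Delta(z) = \frac{n a^2}{4 w^{1/2}} + O\bigl(n a^3\bigr).
\]
By Lemma \ref{matchinglemma1}, $a = f_t(0) = O(n^{-2/3})$, while $|w|$ stays bounded below and above on $\partial U^{(0)}$. Hence $\Delta(z) = O(n^{-1/3})$, and therefore $e^{\Delta \sigma_3} = I + O(n^{-1/3})$.

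To finish, I would note that on $\partial U^{(0)} \setminus \Sigma_S$ both $P^{(\infty)}(z)$ and $z^{\pm\alpha\sigma_3}$ are uniformly bounded (and $P^{(\infty)}$ uniformly invertible, since $\partial U^{(0)}$ stays away from $0$ and from $a_t \to a$), so conjugating an $I + O(n^{-1/3})$ estimate by them preserves the bound, yielding the matching condition. The main obstacle is precisely the phase bookkeeping above: the scaling $\zeta = n^{2/3}(f_t(z)-f_t(0))$, $s = n^{2/3}f_t(0)$ chosen in \eqref{P0hat2} is engineered so that the two leading terms of the local expansion of $n\varphi_t$ cancel the phase $\tfrac{2}{3}\zeta^{3/2} + s\zeta^{1/2}$ carried by the model RH problem, with the remainder kept small by $f_t(0) = O(n^{-2/3})$. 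A secondary, purely technical point is that the asymptotic (c) must hold uniformly for $s$ in compact subsets of $\mathbb R$; this follows from the remark after Proposition \ref{ExistenceProp}, which states that $\Psi_{\alpha}(\,\cdot\,;s)$ is meromorphic in $s$ without poles on the real line.
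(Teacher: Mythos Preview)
Your argument is correct and follows the same overall structure as the paper's proof: insert the definitions of $E$ and $\widehat P^{(0)}$, use condition~(c) of the model RH problem (uniformly in $s$ on compacts, which the paper also invokes), cancel the algebraic prefactors, and reduce everything to showing the phase discrepancy $\Delta(z)=n\varphi_t(z)-\bigl(\tfrac{2}{3}\zeta^{3/2}+s\zeta^{1/2}\bigr)$ is $O(n^{-1/3})$ on $\partial U^{(0)}$.

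The one genuine difference is in how that phase estimate is obtained. The paper sets
\[
F(t,z)=(f_t(z)-f_t(0))^{3/2}-f_t(z)^{3/2}+\tfrac{3}{2}f_t(0)\,(f_t(z)-f_t(0))^{1/2}
\]
and Taylor-expands in $t$ around $t=1$, checking $F(1,z)=0$ and $\partial_t F(1,z)=0$ to conclude $F=O((t-1)^2)$, hence $nF=O(n^{-1/3})$. You instead fix $w=f_t(z)-f_t(0)$ and $a=f_t(0)$, write $\varphi_t=\tfrac{2}{3}(w+a)^{3/2}$, and binomially expand in $a$ to get $\Delta=\tfrac{1}{4}na^2w^{-1/2}+O(na^3)$, then use $a=O(n^{-2/3})$ from Lemma~\ref{matchinglemma1}. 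Your route is a bit more direct and makes the mechanism transparent (the first two terms of the binomial series are precisely what the phase $\tfrac{2}{3}\zeta^{3/2}+s\zeta^{1/2}$ absorbs); the paper's route has the minor advantage of not needing to track branches of $w^{1/2}$ explicitly, since analyticity in $t$ handles that automatically. Both arguments are equivalent once one notes that $a=f_t(0)$ vanishes linearly in $t-1$, so $O(a^2)=O((t-1)^2)$.
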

\begin{proof}
In the proof all $O$-terms are for $n \to \infty$, $t \to 1$
such that $n^{2/3} (t-1)$ is bounded.

By Lemma \ref{matchinglemma1} the values $n^{2/3} f_t(0)$ remain bounded.
Since the asymptotic condition (c) in the RH problem for $\Psi_{\alpha}$
is valid uniformly for $s$ in bounded subsets of $\mathbb R$, we find
by \eqref{P0hat}, \eqref{P0hat2}, and \eqref{Edef}
\begin{alignat}{2} \nonumber
    P^{(0)}(z) & =
    E(z)\,\left(n^{\frac{2}{3}}(f_t(z)-f_t(0))\right)^{-\sigma_3/4}\,\frac{1}{\sqrt{2}}\,
    \begin{pmatrix}
    1 & i \\
    i & 1
  \end{pmatrix}
  \left(I + O(n^{-1/3})\right) \\ \nonumber
  & \qquad \times \exp\left(- \theta(n^{2/3}(f_t(z)-f_t(0)); n^{2/3} f_t(0)) \sigma_3\right)
    e^{n \varphi_t \sigma_3} z^{-\alpha \sigma_3} \\[10pt]
    & = \label{P0expansion}
    P^{(\infty)}(z) (I + O(n^{-1/3}))
    \exp\left(- \left(\theta(n^{2/3}(f_t(z)-f_t(0)); n^{2/3} f_t(0)) - n \varphi_t \right) \sigma_3 \right)
\end{alignat}
uniformly for $z \in \partial U^{(0)}$.
As before we denote $\theta(\zeta;s) = \frac{2}{3} \zeta^{3/2} + s \zeta^{1/2}$.

The next step is to evaluate the expression in the exponential factor. We have
\begin{alignat*}{2}
    & \theta(n^{2/3}(f_t(z)-f_t(0)); n^{2/3} f_t(0))
    -n \varphi_t  \\
    & =
    \frac{2}{3} n \left[(f_t(z)-f_t(0))^{3/2} - (f_t(z))^{3/2}\right] +
    n f_t(0) (f_t(z)-f_t(0))^{1/2}.
\end{alignat*}
We will show that this is $O(n^{-1/3})$ uniformly for $z \in \partial U^{(0)}$.
To that end, it is enough to show that
\begin{alignat}{2} \nonumber
   F(t,z) & := (f_t(z)-f_t(0))^{3/2} - (f_t(z))^{3/2} + \frac{3}{2}
    f_t(0)\,(f_t(z)-f_t(0))^{1/2} \\[5pt]
    & = O((t-1)^2) \qquad \text{as } t \to 1, \label{thetaprop}
    \end{alignat}
uniformly for $z \in \partial U^{(0)}$.

By \eqref{ftprop}, we have
\begin{equation}
    F(1,z) = 0.
\end{equation}
Moreover,
\begin{alignat*}{2}
\frac{\partial}{\partial t} F(t,z) &=
    \frac{3}{2}\,(f_t(z)-f_t(0))^{\frac{1}{2}}\,\frac{\partial }{\partial t}\,(f_t(z)-f_t(0)) -
    \frac{3}{2}(f_t(z))^{\frac{1}{2}}\,\frac{\partial }{\partial t}\,f_t(z)\\
    & \qquad + \frac{3}{2}\,\left(\frac{d }{d t}\,f_t(0)\right)
    (f_t(z)-f_t(0))^{\frac{1}{2}} \\
    & \qquad +
\frac{3}{2}\,f_t(0)\,\frac{1}{2}\,(f_t(z)-f_t(0))^{-\frac{1}{2}}\,\frac{\partial }{\partial t}\,(f_t(z)-f_t(0)).
\end{alignat*}
Let $t = 1$ and again use \eqref{ftprop} and \eqref{ftdef}. Due to cancellations one finds
\begin{equation}
    \frac{\partial}{\partial t}\,F(1,z) = 0.
\end{equation}
Since, in addition, $F(t,z)$ is analytic in both variables
and bounded with respect to $z$ in $\partial U^{(0)}$, it follows
from a Taylor expansion that $F(t,z) = O((t - 1)^2)$, as claimed
in \eqref{thetaprop}.
Thus
\[ \theta(n^{2/3}(f_t(z)-f_t(0)); n^{2/3} f_t(0)) -n \varphi_t
    = O(n^{-1/3}), \]
so that \eqref{P0expansion} leads to
\[ P^{(0)}(z) = P^{(\infty)}(z) \left(I + O(n^{-1/3})\right), \]
uniformly for $z \in \partial U^{(0)}$.
Then \eqref{matching} follows   since $P^{(\infty)}(z)$ and its inverse are
bounded in $n$ and $t$, uniformly for $z \in \partial U^{(0)}$.
\end{proof}

This completes the construction of the parametrix $P^{(0)}$.

\begin{remark}
The local parametrix $P^{(0)}$ is constructed with the help
of a solution $\Psi_{\alpha}$ of the model RH problem.
Since the solution $\Psi_{\alpha}$ is not unique
(see Proposition \ref{nonunique}),
the local parametrix is not unique. In what follows we can
take any $P^{(0)}$ and it will not affect the final results
(Theorems \ref{theorem1} and \ref{theorem2}).
\end{remark}

\subsubsection{Third transformation $S \mapsto R$}

\begin{figure}[t]
\ifx\JPicScale\undefined\def\JPicScale{1}\fi
\unitlength \JPicScale mm
\begin{picture}(80,15)(-30,20)

\linethickness{0.3mm}
\put(10,30){\line(1,0){15}}

\linethickness{0.30mm}
\put(30.00,30.00){\circle{10.00}}

\linethickness{0.30mm}
\put(60.00,30.00){\circle{10.00}}

\linethickness{0.3mm}
\put(65,30){\line(1,0){15}}

\linethickness{0.3mm}
\multiput(56.5,34.07)(0.13,-0.11){3}{\line(1,0){0.13}}
\multiput(56.11,34.37)(0.13,-0.1){3}{\line(1,0){0.13}}
\multiput(55.71,34.67)(0.2,-0.15){2}{\line(1,0){0.2}}
\multiput(55.3,34.95)(0.2,-0.14){2}{\line(1,0){0.2}}
\multiput(54.88,35.23)(0.21,-0.14){2}{\line(1,0){0.21}}
\multiput(54.46,35.49)(0.21,-0.13){2}{\line(1,0){0.21}}
\multiput(54.03,35.74)(0.21,-0.13){2}{\line(1,0){0.21}}
\multiput(53.6,35.98)(0.22,-0.12){2}{\line(1,0){0.22}}
\multiput(53.15,36.21)(0.22,-0.11){2}{\line(1,0){0.22}}
\multiput(52.71,36.42)(0.22,-0.11){2}{\line(1,0){0.22}}
\multiput(52.25,36.62)(0.23,-0.1){2}{\line(1,0){0.23}}
\multiput(51.79,36.81)(0.23,-0.1){2}{\line(1,0){0.23}}
\multiput(51.33,36.99)(0.46,-0.18){1}{\line(1,0){0.46}}
\multiput(50.86,37.16)(0.47,-0.17){1}{\line(1,0){0.47}}
\multiput(50.39,37.31)(0.47,-0.15){1}{\line(1,0){0.47}}
\multiput(49.91,37.45)(0.48,-0.14){1}{\line(1,0){0.48}}
\multiput(49.43,37.58)(0.48,-0.13){1}{\line(1,0){0.48}}
\multiput(48.95,37.69)(0.48,-0.11){1}{\line(1,0){0.48}}
\multiput(48.46,37.79)(0.49,-0.1){1}{\line(1,0){0.49}}
\multiput(47.97,37.88)(0.49,-0.09){1}{\line(1,0){0.49}}
\multiput(47.48,37.95)(0.49,-0.07){1}{\line(1,0){0.49}}
\multiput(46.98,38.01)(0.49,-0.06){1}{\line(1,0){0.49}}
\multiput(46.49,38.06)(0.49,-0.05){1}{\line(1,0){0.49}}
\multiput(45.99,38.09)(0.5,-0.03){1}{\line(1,0){0.5}}
\multiput(45.5,38.11)(0.5,-0.02){1}{\line(1,0){0.5}}
\multiput(45,38.12)(0.5,-0.01){1}{\line(1,0){0.5}}
\multiput(44.5,38.11)(0.5,0.01){1}{\line(1,0){0.5}}
\multiput(44.01,38.09)(0.5,0.02){1}{\line(1,0){0.5}}
\multiput(43.51,38.06)(0.5,0.03){1}{\line(1,0){0.5}}
\multiput(43.02,38.01)(0.49,0.05){1}{\line(1,0){0.49}}
\multiput(42.52,37.95)(0.49,0.06){1}{\line(1,0){0.49}}
\multiput(42.03,37.88)(0.49,0.07){1}{\line(1,0){0.49}}
\multiput(41.54,37.79)(0.49,0.09){1}{\line(1,0){0.49}}
\multiput(41.05,37.69)(0.49,0.1){1}{\line(1,0){0.49}}
\multiput(40.57,37.58)(0.48,0.11){1}{\line(1,0){0.48}}
\multiput(40.09,37.45)(0.48,0.13){1}{\line(1,0){0.48}}
\multiput(39.61,37.31)(0.48,0.14){1}{\line(1,0){0.48}}
\multiput(39.14,37.16)(0.47,0.15){1}{\line(1,0){0.47}}
\multiput(38.67,36.99)(0.47,0.17){1}{\line(1,0){0.47}}
\multiput(38.21,36.81)(0.46,0.18){1}{\line(1,0){0.46}}
\multiput(37.75,36.62)(0.23,0.1){2}{\line(1,0){0.23}}
\multiput(37.29,36.42)(0.23,0.1){2}{\line(1,0){0.23}}
\multiput(36.85,36.21)(0.22,0.11){2}{\line(1,0){0.22}}
\multiput(36.4,35.98)(0.22,0.11){2}{\line(1,0){0.22}}
\multiput(35.97,35.74)(0.22,0.12){2}{\line(1,0){0.22}}
\multiput(35.54,35.49)(0.21,0.13){2}{\line(1,0){0.21}}
\multiput(35.12,35.23)(0.21,0.13){2}{\line(1,0){0.21}}
\multiput(34.7,34.95)(0.21,0.14){2}{\line(1,0){0.21}}
\multiput(34.29,34.67)(0.2,0.14){2}{\line(1,0){0.2}}
\multiput(33.89,34.37)(0.2,0.15){2}{\line(1,0){0.2}}
\multiput(33.5,34.07)(0.13,0.1){3}{\line(1,0){0.13}}
\multiput(33.12,33.75)(0.13,0.11){3}{\line(1,0){0.13}}

\linethickness{0.3mm}
\multiput(33.12,26.25)(0.13,-0.11){3}{\line(1,0){0.13}}
\multiput(33.5,25.93)(0.13,-0.1){3}{\line(1,0){0.13}}
\multiput(33.89,25.63)(0.2,-0.15){2}{\line(1,0){0.2}}
\multiput(34.29,25.33)(0.2,-0.14){2}{\line(1,0){0.2}}
\multiput(34.7,25.05)(0.21,-0.14){2}{\line(1,0){0.21}}
\multiput(35.12,24.77)(0.21,-0.13){2}{\line(1,0){0.21}}
\multiput(35.54,24.51)(0.21,-0.13){2}{\line(1,0){0.21}}
\multiput(35.97,24.26)(0.22,-0.12){2}{\line(1,0){0.22}}
\multiput(36.4,24.02)(0.22,-0.11){2}{\line(1,0){0.22}}
\multiput(36.85,23.79)(0.22,-0.11){2}{\line(1,0){0.22}}
\multiput(37.29,23.58)(0.23,-0.1){2}{\line(1,0){0.23}}
\multiput(37.75,23.38)(0.23,-0.1){2}{\line(1,0){0.23}}
\multiput(38.21,23.19)(0.46,-0.18){1}{\line(1,0){0.46}}
\multiput(38.67,23.01)(0.47,-0.17){1}{\line(1,0){0.47}}
\multiput(39.14,22.84)(0.47,-0.15){1}{\line(1,0){0.47}}
\multiput(39.61,22.69)(0.48,-0.14){1}{\line(1,0){0.48}}
\multiput(40.09,22.55)(0.48,-0.13){1}{\line(1,0){0.48}}
\multiput(40.57,22.42)(0.48,-0.11){1}{\line(1,0){0.48}}
\multiput(41.05,22.31)(0.49,-0.1){1}{\line(1,0){0.49}}
\multiput(41.54,22.21)(0.49,-0.09){1}{\line(1,0){0.49}}
\multiput(42.03,22.12)(0.49,-0.07){1}{\line(1,0){0.49}}
\multiput(42.52,22.05)(0.49,-0.06){1}{\line(1,0){0.49}}
\multiput(43.02,21.99)(0.49,-0.05){1}{\line(1,0){0.49}}
\multiput(43.51,21.94)(0.5,-0.03){1}{\line(1,0){0.5}}
\multiput(44.01,21.91)(0.5,-0.02){1}{\line(1,0){0.5}}
\multiput(44.5,21.89)(0.5,-0.01){1}{\line(1,0){0.5}}
\multiput(45,21.88)(0.5,0.01){1}{\line(1,0){0.5}}
\multiput(45.5,21.89)(0.5,0.02){1}{\line(1,0){0.5}}
\multiput(45.99,21.91)(0.5,0.03){1}{\line(1,0){0.5}}
\multiput(46.49,21.94)(0.49,0.05){1}{\line(1,0){0.49}}
\multiput(46.98,21.99)(0.49,0.06){1}{\line(1,0){0.49}}
\multiput(47.48,22.05)(0.49,0.07){1}{\line(1,0){0.49}}
\multiput(47.97,22.12)(0.49,0.09){1}{\line(1,0){0.49}}
\multiput(48.46,22.21)(0.49,0.1){1}{\line(1,0){0.49}}
\multiput(48.95,22.31)(0.48,0.11){1}{\line(1,0){0.48}}
\multiput(49.43,22.42)(0.48,0.13){1}{\line(1,0){0.48}}
\multiput(49.91,22.55)(0.48,0.14){1}{\line(1,0){0.48}}
\multiput(50.39,22.69)(0.47,0.15){1}{\line(1,0){0.47}}
\multiput(50.86,22.84)(0.47,0.17){1}{\line(1,0){0.47}}
\multiput(51.33,23.01)(0.46,0.18){1}{\line(1,0){0.46}}
\multiput(51.79,23.19)(0.23,0.1){2}{\line(1,0){0.23}}
\multiput(52.25,23.38)(0.23,0.1){2}{\line(1,0){0.23}}
\multiput(52.71,23.58)(0.22,0.11){2}{\line(1,0){0.22}}
\multiput(53.15,23.79)(0.22,0.11){2}{\line(1,0){0.22}}
\multiput(53.6,24.02)(0.22,0.12){2}{\line(1,0){0.22}}
\multiput(54.03,24.26)(0.21,0.13){2}{\line(1,0){0.21}}
\multiput(54.46,24.51)(0.21,0.13){2}{\line(1,0){0.21}}
\multiput(54.88,24.77)(0.21,0.14){2}{\line(1,0){0.21}}
\multiput(55.3,25.05)(0.2,0.14){2}{\line(1,0){0.2}}
\multiput(55.71,25.33)(0.2,0.15){2}{\line(1,0){0.2}}
\multiput(56.11,25.63)(0.13,0.1){3}{\line(1,0){0.13}}
\multiput(56.5,25.93)(0.13,0.11){3}{\line(1,0){0.13}}

\thicklines \put(71.25,30){\line(1,0){1.25}}
\put(72.5,30){\vector(1,0){0.12}} \thicklines
\put(17.5,30){\line(1,0){1.25}} \put(18.75,30){\vector(1,0){0.12}}
\thicklines \put(29.38,35){\line(1,0){0.62}}
\put(29.38,35){\vector(-1,0){0.12}} \thicklines
\put(59.38,35){\line(1,0){0.62}} \put(59.38,35){\vector(-1,0){0.12}}
\thicklines \put(45,38.12){\line(1,0){0.62}}
\put(45.62,38.12){\vector(1,0){0.12}} \linethickness{0.3mm}
\put(45,21.88){\line(1,0){0.62}}
\put(45.62,21.88){\vector(1,0){0.12}}

\put(30,27.75){\makebox(0,0)[cc]{$a$}}

\put(60,27.75){\makebox(0,0)[cc]{$0$}}

\put(60,30){\makebox(0,0)[cc]{.}}

\put(30,30){\makebox(0,0)[cc]{.}}

\end{picture}
\caption{\label{figure4} Contour for the RH problem for $R$.}
\end{figure}
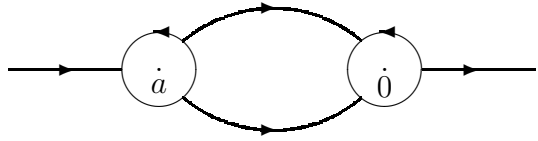

Having $P^{(\infty)}$, $P^{(a)}$, and $P^{(0)}$, we take $P$ as
in \eqref{Pdef}, and then we define
\begin{equation}
\label{Rdef}
R(z) = S(z)\,P^{-1}(z), \quad \text{ for } z \in
\mathbb{C} \setminus (\partial U^{(0)} \cup \partial U^{(a)} \cup
\Sigma_S).
\end{equation}
Since $S$ and $P$ have the same jump matrices on $U^{(0)} \cap
\Sigma_S$, $U^{(a)} \cap \Sigma_S$ and $(a,0) \setminus (U^{(0)}
\cup U^{(a)})$, we have that $R$ is analytic across these contours.
What remains are jumps for $R$ on the contour $\Sigma_R$ shown in
Figure \ref{figure4} with orientation that is also shown in the
figure. Then, $R$ satisfies the following RH problem.

\paragraph{Riemann-Hilbert problem for $R$}
\begin{itemize}
\item $R : \mathbb{C} \setminus \Sigma_R \to \mathbb C^{2\times 2}$ is analytic.
\item $R_+(z) = R_-(z)\,v_R(z)$ for $z \in \Sigma_R$, where\\
\begin{equation} \label{vRdef}
v_R = \left\{
    \begin{array}{ll}
        P^{(\infty)}\,(P^{(0)})^{-1},&\text{ on } \partial U^{(0)},\\
        P^{(\infty)}\,(P^{(a)})^{-1},&\text{ on } \partial U^{(a)},\\
        P^{(\infty)}\,v_S\,(P^{(\infty)})^{-1},&\text{ on } \Sigma_R
            \setminus (\partial U^{(0)} \cup \partial U^{(a)}).
\end{array}
\right.
\end{equation}
\item $R(z) = I + O(1/z)$ as $z \to \infty$.
\end{itemize}

Now let $n \to \infty$, $t \to 1$ such that $n^{2/3}(t -1) = O(1)$.
It then follows from the construction of the parametrices (see in
particular the RH problems for $P^{(0)}$ and $P^{(a)}$) that
\begin{equation} \label{vRasymptotics1}
v_R = \left\{
\begin{array}{ll}
I + O(n^{-1/3}),&\text{ on } \partial U^{(0)},\\[5pt]
I + O(n^{-1}),&\text{ on } \partial U^{(a)}.
\end{array}
\right.
\end{equation}
Furthermore, by regularity of the eigenvalue density, there is a constant $c > 0$ such that
\begin{align*}
   & \Re \varphi_t(z) > c > 0,  \qquad \text{ for } z \in \Sigma_R \cap (0,\infty), \\
   & \Re \tilde{\varphi}_t(z) > c > 0,  \qquad \text{ for } z\in \Sigma_R \cap (-\infty,a), \\
   & \Re \varphi_t(z) < -c < 0,  \quad
    \text{ for } z \in \Sigma_R \setminus (\partial U^{(0)} \cup \partial U^{(a)} \cup \mathbb R).
\end{align*}
This implies (see the RH problem for $S$) that $v_S = I +
O(e^{-cn})$ uniformly on $\Sigma_R \setminus (\partial U^{(0)} \cup
\partial U^{(a)})$, so that by \eqref{vRdef}
\begin{equation} \label{vRasymptotics2}
    v_R = I + O(e^{-2c n}) \qquad \text{ on } \Sigma_R \setminus
    (\partial U^{(0)} \cup \partial U^{(a)}).
\end{equation}
The $O$-terms in \eqref{vRasymptotics1} and \eqref{vRasymptotics2}
are uniform on the indicated contours.
In addition, it follows from \eqref{g-identity1}, \eqref{g-identity2}, \eqref{gtdef}, and the
growth condition \eqref{Vgrowth} on $V$ that for any $C > 0$ there exists
$r = r(C) > 1$ such that $\varphi_t(x) \geq C \log x$ for $x \geq r$,
and $\tilde{\varphi}_t(x) \geq C \log |x|$ for $x \leq -r$.
Combined with \eqref{vRasymptotics2} this implies that
\begin{equation} \label{vRasymptotics3}
||v_R - I||_{L^2\left(\Sigma_R \setminus
    (\partial U^{(0)} \cup \partial U^{(a)})\right)} = O(e^{-2c n}), \qquad \text{ as } n \to \infty.
\end{equation}
Thus, by \eqref{vRasymptotics1}--\eqref{vRasymptotics3}, as $n \to \infty$
and $t \to 1$ such that $n^{2/3} (t-1) = O(1)$, the jump matrix for
$R$ is close to $I$ in both $L^{2}$ and $L^{\infty}$ norm on
$\Sigma_R$, indeed
\begin{equation}
||v_R - I||_{L^2(\Sigma_R) \cap L^{\infty}(\Sigma_R)} = O(n^{-1/3}).
\end{equation}
Standard estimates using $L^2$-boundedness of the
operators $C_{\pm}$ on $L^2(\Sigma_R)$ together with the correspondence between RH
problems and singular integral equations now imply that
\begin{equation}
\label{RclosetoI} R(z) = I + O(n^{-1/3}), \quad \text{uniformly for
} z \in \mathbb{C} \setminus \Sigma_R,
\end{equation}
as $n\to \infty$, $t \to 1$ such that $n^{2/3}(t -1) = O(1)$.
To get the uniform bound \eqref{RclosetoI} up to the contour one needs a contour deformation
argument. Again, see the presentation in \cite{Deift} for more details.

This completes the steepest descent analysis of the
RH problem for $Y$.

\subsection{Completion of the proofs of Theorem \ref{theorem1} and \ref{theorem2}}

Having completed the steepest descent analysis we are now ready
for the proofs of Theorem \ref{theorem1} and \ref{theorem2}.
We start by rewriting the kernel \eqref{CD-kernel} for $x,y \in U^{(0)} \cap \mathbb R$
according to the transformations $Y \mapsto T \mapsto S \mapsto R$ that
we did in the steepest descent analysis.
To state the result it is convenient to introduce $B = B_{n,N}$ as
\begin{equation} \label{Bdef}
    B(z) = R(z) E(z), \qquad \text{for } z \in U^{(0)},
\end{equation}
where $E$ and $R$ are defined in \eqref{Edef} and \eqref{Rdef}.
We also define for $x, s  \in \mathbb R$ the column vector
\begin{equation} \label{vecpsidef}
    \vec{\psi}_{\alpha}(x;s) = \begin{pmatrix} \psi_1(x;s) \\[5pt] \psi_2(x;s) \end{pmatrix}
        =   \left\{ \begin{array}{ll}
        \Psi_{\alpha,+}(x;s) \begin{pmatrix} 1 \\ 0 \end{pmatrix},
        & \text{ for } x > 0, \\
        \Psi_{\alpha,+}(x;s)  e^{-\alpha \pi i \sigma_3}
        \begin{pmatrix} 1 \\ 1 \end{pmatrix}, & \text{ for } x < 0,
        \end{array} \right.
\end{equation}
cf.\ \eqref{psi12def}.
We then have the following result.
\begin{lemma}
\label{CD-kernelFinite} Let $x, y \in  U^{(0)} \cap \mathbb{R} $.
Then,
\begin{alignat}{2} \nonumber
K_{n,N}(x,y) &= \frac{1}{2\pi i (x - y)}\,\left(\vec{\psi}_{\alpha}\left(n^{\frac{2}{3}}(f_t(y)-f_t(0)) ;
    n^{\frac{2}{3}}f_t(0)\right)\right)^{T}
\begin{pmatrix} 0 & 1\\ -1 & 0 \end{pmatrix}\\
&\qquad \times B^{-1}(y)
B(x)\, \vec{\psi}_{\alpha}\left(n^{\frac{2}{3}}(f_t(x)-f_t(0)) ; n^{\frac{2}{3}}\,f_t(0)\right).
\label{CD-kernelthroughB}
\end{alignat}
\end{lemma}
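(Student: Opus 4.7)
The plan is to start from the Christoffel--Darboux-type identity \eqref{CD-kernel} and systematically unfold, in reverse order, the chain of transformations $Y \mapsto T \mapsto S \mapsto R$, finally replacing $R$ by $B = RE$ via \eqref{Bdef} and substituting the explicit form \eqref{P0hat}--\eqref{P0hat2} of the local parametrix. The output should be the representation \eqref{CD-kernelthroughB}, in which the microscopic $\zeta$-dependence sits entirely inside the column vectors $\vec{\psi}_{\alpha}$, while $B$ collects the slowly varying information from the outer parametrix and the $R$-matrix.

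Concretely, one first uses \eqref{Tdef} to get
\[
(0,1)\,Y_+^{-1}(y)\,Y_+(x)\,(1,0)^T = e^{n(g_{t,+}(x)+g_{t,+}(y))+nl_t}\,(0,1)\,T_+^{-1}(y)\,T_+(x)\,(1,0)^T,
\]
since the $e^{\pm\frac{1}{2} n l_t\sigma_3}$ factors contribute only scalars when paired against the row $(0,1)$ and the column $(1,0)^T$. Combined with the prefactor $|x|^\alpha|y|^\alpha e^{-\frac{1}{2} N(V(x)+V(y))}$ in \eqref{CD-kernel} and with the scalar identity $g_{t,+}(x)-\tfrac{1}{2} V_t(x)+\tfrac{1}{2} l_t = -\varphi_{t,+}(x)$ (valid on $U^{(0)}\cap\mathbb R$ by \eqref{variational1} and \eqref{g-identity1}), the full scalar prefactor collapses to $|x|^\alpha|y|^\alpha e^{-n(\varphi_{t,+}(x)+\varphi_{t,+}(y))}$. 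Next, \eqref{Sdef} rewrites $T_+(x)$ as $S_+(x)L_+(x)$ on the lens segment $(a_t,0)$ and as $S_+(x)$ off the lens; since $R$ is analytic throughout $U^{(0)}$, $S_+(x) = R(x)P^{(0)}_+(x)$, and \eqref{P0hat}--\eqref{P0hat2} together with \eqref{Bdef} give
\[
S_+(x) = B(x)\,\Psi_{\alpha,+}\!\left(n^{2/3}(f_t(x)-f_t(0));\,n^{2/3}f_t(0)\right)\,e^{n\varphi_{t,+}(x)\sigma_3}\,x_+^{-\alpha\sigma_3}.
\]

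It then remains to verify, case by case in the sign of $x$ (and of $y$), that the rightmost factors $e^{n\varphi_{t,+}(x)\sigma_3} x_+^{-\alpha\sigma_3}\cdot[L_+(x)\text{ or }I]\cdot(1,0)^T$ combine with the scalar $|x|^\alpha e^{-n\varphi_{t,+}(x)}$ to reproduce exactly the column \eqref{vecpsidef}. For $x>0$ this is immediate, since $x_+^{-\alpha\sigma_3}(1,0)^T = x^{-\alpha}(1,0)^T$ and $L_+$ is absent. For $x<0$, the lens factor $L_+(x) = \bigl(\begin{smallmatrix} 1 & 0 \\ |x|^{-2\alpha} e^{2n\varphi_{t,+}(x)} & 1 \end{smallmatrix}\bigr)$ acting on $(1,0)^T$, together with $x_+^{-\alpha\sigma_3}$ and the two exponentials $e^{\pm n\varphi_{t,+}(x)}$, produces exactly $e^{-\alpha\pi i\sigma_3}(1,1)^T$ up to the required scalar, matching the second line of \eqref{vecpsidef}. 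The parallel computation for the row vector uses $\det B\equiv 1$ (which holds since $\det P^{(\infty)}=\det E = 1$ and $\det R = 1$) and the $2\times 2$ identity $B^T J = -J B^{-1}$, valid for any unimodular $B$, with $J = \bigl(\begin{smallmatrix} 0 & 1\\ -1 & 0\end{smallmatrix}\bigr)$; this gives $(0,1)T_+^{-1}(y) = \bigl(T_+(y)(1,0)^T\bigr)^T J$ and hence introduces the transposed vector $\vec\psi_\alpha(\zeta_y)^T$ and the symplectic matrix $J$ that appear on the right-hand side of \eqref{CD-kernelthroughB}.

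The principal obstacle is not conceptual but organizational: one must verify the identity in each of the four sign combinations of $(x,y)$ and, when $b_t\neq 0$, separately for $x\in(b_t,0)$ versus $x\in(a_t,b_t)$, carefully tracking how the principal-branch phase $e^{-i\alpha\pi}$ of $x_+^{-\alpha}$ on the negative axis cancels against the off-diagonal entries $e^{\pm 2\alpha\pi i}$ in the jump matrices $v_2$ and $v_4$ of $\Psi_\alpha$ that are hidden inside $L_+$. The two branches of the piecewise definition \eqref{vecpsidef} have been designed precisely so that these cancellations happen uniformly and reduce the final expression to the form \eqref{CD-kernelthroughB}.
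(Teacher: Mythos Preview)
Your proposal is correct and follows essentially the same route as the paper: rewrite \eqref{CD-kernel} via $Y\mapsto T$, collapse the scalar prefactor using $2g_{t,+}-V_t+l_t=-2\varphi_{t,+}$, then unwind $T\mapsto S\mapsto R$ and substitute \eqref{P0hat}--\eqref{P0hat2}, \eqref{Bdef}, treating $x>0$ and $x<0$ separately and handling the row vector via the unimodular identity $A^{-1}=(-J)A^TJ$. One small slip: for $J=\bigl(\begin{smallmatrix}0&1\\-1&0\end{smallmatrix}\bigr)$ and $\det B=1$ the correct relation is $B^TJ=JB^{-1}$, not $-JB^{-1}$; with this sign your derivation of $(0,1)T_+^{-1}(y)=\bigl(T_+(y)(1,0)^T\bigr)^TJ$ goes through exactly as in the paper.
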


\begin{proof}
We start from the formula \eqref{CD-kernel} for the
eigenvalue correlation kernel. Using \eqref{Tdef} we obtain, for any
$x,y \in \mathbb{R}$,
\begin{alignat}{2}
\notag
K_{n,N}(x,y) &= |x|^{\alpha}\,e^{\frac{1}{2} n (2 g_{t,+}(x) - V_t(x)+ l_t)}
\,|y|^{\alpha}\,e^{\frac{1}{2} n (2 g_{t,+}(y) - V_t(y) + l_t)}\\
\label{CD-kernelthroughT}
&\qquad \times \frac{1}{2\pi i
(x-y)}\,\begin{pmatrix} 0 & 1
\end{pmatrix} T_+^{-1}(y)\,T_+(x) \begin{pmatrix} 1 \\
0\end{pmatrix}.
\end{alignat}
Using \eqref{g-identity1} and the fact that
$g_{t,+} = g_{t,-}$ on $(b_t, \infty)$, it follows that $2g_t - V_t
+l_t = -2\varphi_t$ on $(b_t, \infty)$. Then, by analytic
continuation, $2g_{t,+} - V_t +l_t = -2\varphi_{t,+}$ on all of
$\mathbb{R}$. Therefore we can rewrite \eqref{CD-kernelthroughT} as
\begin{alignat}{2}
\label{CD-kernelthroughT2}
K_{n,N}(x,y) &= |x|^{\alpha}\,e^{- n \varphi_{t,+}(x)}
\,|y|^{\alpha}\,e^{-n \varphi_{t,+}(y)}
\frac{1}{2\pi i
(x-y)}\,\begin{pmatrix} 0 & 1
\end{pmatrix} T_+^{-1}(y)\,T_+(x) \begin{pmatrix} 1 \\ 0\end{pmatrix}.
\end{alignat}

Now we analyze the effect of the transformations $T \mapsto S \mapsto R$
on the expression
$|x|^{\alpha}  e^{- n \varphi_{t,+}(x)} T_+(x)
\left(\begin{smallmatrix} 1 \\ 0\end{smallmatrix}\right)$
in case $x \in U^{(0)} \cap \mathbb R$.
The result is that for $x \in U^{(0)} \cap \mathbb R$,
\begin{align}
    |x|^{\alpha}  e^{- n \varphi_{t,+}(x)} T_+(x) \begin{pmatrix} 1 \\ 0\end{pmatrix}
    =\label{KnNrewrite-xpart1}
     B(x) \Psi_{\alpha,+}\left(n^{\frac{2}{3}}(f_t(x)-f_t(0));
    n^{\frac{2}{3}} f_t(0)\right)
        \begin{pmatrix} 1 \\ 0 \end{pmatrix}
\end{align}
in case $x > 0$, and
\begin{align}
    |x|^{\alpha}  e^{- n \varphi_{t,+}(x)} T_+(x) \begin{pmatrix} 1 \\ 0\end{pmatrix}
   = \label{KnNrewrite-xpart2}
     B(x) \Psi_{\alpha,+}\left(n^{\frac{2}{3}}(f_t(x)-f_t(0)) ;
    n^{\frac{2}{3}} f_t(0)\right)
        e^{-\alpha \pi i \sigma_3} \begin{pmatrix} 1 \\ 1 \end{pmatrix},
\end{align}
in case $x < 0$. Since the calculations for \eqref{KnNrewrite-xpart1} are easier,
we will only show how to obtain \eqref{KnNrewrite-xpart2}.
If $x \in U^{(0)} \cap \mathbb R$ and $x < 0$, then it follows
from \eqref{Sdef}
that
\begin{align} \nonumber
    |x|^{\alpha}  e^{- n \varphi_{t,+}(x)} T_+(x) \begin{pmatrix} 1 \\ 0\end{pmatrix}
    & = |x|^{\alpha}  e^{- n \varphi_{t,+}(x)} S_+(x)
        \begin{pmatrix} 1 \\ |x|^{-2\alpha} e^{2n \varphi_{t,+}(x)} \end{pmatrix} \\
    & = S_+(x) \left(|x|^{\alpha}  e^{- n \varphi_{t,+}(x)}\right)^{\sigma_3}
        \begin{pmatrix} 1 \\ 1 \end{pmatrix}.
        \label{KnNrewrite-xpart3}
    \end{align}
From \eqref{Rdef}, \eqref{Pdef}, \eqref{P0hat}, \eqref{Edef}, and \eqref{Bdef}, we find that
\[ S_+(x) = B(x) \Psi_{\alpha,+}\left(n^{\frac{2}{3}}(f_t(x)-f_t(0)) ;
    n^{\frac{2}{3}} f_t(0)\right)
        \left(e^{n \varphi_{t}(x)} x^{-\alpha}\right)^{\sigma_3}_+. \]
Inserting this into \eqref{KnNrewrite-xpart3} and
noting that $x^{-\alpha}_+ |x|^{\alpha} = e^{-\alpha \pi i}$ we indeed
obtain \eqref{KnNrewrite-xpart2}.

In a similar way, we find for $y \in U^{(0)} \cap \mathbb R$,
\begin{align}
|y|^{\alpha} e^{-n \varphi_{t,+}(y)}
\begin{pmatrix} 0 & 1
\end{pmatrix} T_+^{-1}(y)  =
\begin{pmatrix} 0 & 1 \end{pmatrix}
\Psi_{\alpha,+}^{-1}\left(n^{\frac{2}{3}}(f_t(y)-f_t(0)) ;
    n^{\frac{2}{3}} f_t(0)\right) B^{-1}(y),
    \label{KnNrewrite-ypart1}
\end{align}
in case $y > 0$, and
\begin{align}
|y|^{\alpha} e^{-n \varphi_{t,+}(y)}
\begin{pmatrix} 0 & 1
\end{pmatrix} T_+^{-1}(y)  =
\begin{pmatrix} -1 & 1 \end{pmatrix} e^{\alpha \pi i \sigma_3}
\Psi_{\alpha,+}^{-1}\left(n^{\frac{2}{3}}(f_t(y)-f_t(0)) ;
    n^{\frac{2}{3}} f_t(0)\right) B^{-1}(y),
    \label{KnNrewrite-ypart2}
\end{align}
in case $y < 0$. To rewrite \eqref{KnNrewrite-ypart1} and
\eqref{KnNrewrite-ypart2} we use the following fact, which is easy
to check. If $A$ is an invertible $2 \times 2$ matrix having
determinant $1$, then
\begin{equation}
\label{AinverseAndAtranspose} A^{-1} = \begin{pmatrix} 0 & -1 \\ 1 &
0 \end{pmatrix} A^{T}
\begin{pmatrix} 0 & 1 \\ -1 & 0 \end{pmatrix}.
\end{equation}
If we apply \eqref{AinverseAndAtranspose} to $\Psi_{\alpha,+}$
in \eqref{KnNrewrite-ypart1} and \eqref{KnNrewrite-ypart2}, then we get
\begin{align} \nonumber
|y|^{\alpha} e^{-n \varphi_{t,+}(y)}
\begin{pmatrix} 0 & 1
\end{pmatrix} T_+^{-1}(y) & =
\begin{pmatrix} 1 & 0 \end{pmatrix}
\Psi_{\alpha,+}^{T}\left(n^{\frac{2}{3}}(f_t(y)-f_t(0)) ;
    n^{\frac{2}{3}} f_t(0)\right) \\
    & \qquad \times \label{KnNrewrite-ypart3}
    \begin{pmatrix} 0 & 1 \\ -1 & 0 \end{pmatrix} B^{-1}(y),
\end{align}
in case $y > 0$, and
\begin{align} \nonumber
|y|^{\alpha} e^{-n \varphi_{t,+}(y)}
\begin{pmatrix} 0 & 1
\end{pmatrix} T_+^{-1}(y)
    & =  \begin{pmatrix} 1 & 1 \end{pmatrix} e^{-\alpha \pi i \sigma_3}
\Psi_{\alpha,+}^{T}\left(n^{\frac{2}{3}}(f_t(y)-f_t(0)) ;
    n^{\frac{2}{3}} f_t(0)\right) \\
    &\qquad \times \label{KnNrewrite-ypart4}
    \begin{pmatrix} 0 & 1 \\ -1 & 0 \end{pmatrix} B^{-1}(y),
\end{align}
in case $y < 0$.
Then \eqref{CD-kernelthroughB} follows if we insert
\eqref{KnNrewrite-xpart1}, \eqref{KnNrewrite-xpart2},
    \eqref{KnNrewrite-ypart3}, and \eqref{KnNrewrite-ypart4}
    into \eqref{CD-kernelthroughT2} and use the definition
\eqref{vecpsidef}.
\end{proof}

As in Theorem \ref{theorem1} we now fix $x,y \in \mathbb{R}$. We
define
\begin{equation} \label{xnyndef}
    x_n = \frac{x}{(c_1 n)^{2/3}}, \qquad \text{ and }
    \qquad
    y_n = \frac{y}{(c_1 n)^{2/3}}
\end{equation}
where $c_1$ is the constant from \eqref{c1def}.

In order to take the limit of $(c_1 n)^{-2/3} K_{n,N} (x_n,y_n)$
we need one more lemma. Recall that $B = RE$ is
defined in \eqref{Bdef}.
\begin{lemma} \label{Bprop}
    Let $n \to \infty$, $t \to 1$ such that $n^{2/3}(t-1) \to L$.
    Let $x, y \in \mathbb R$ and let $x_n$ and $y_n$ defined as in \eqref{xnyndef},
Then the following hold.
\begin{enumerate}
\item[\rm (a)] $n^{2/3} f_t(0) \to s$,
\item[\rm (b)] $n^{2/3}(f_t(x_n) - f_t(0)) \to x$
and $n^{2/3}(f_t(y_n) - f_t(0)) \to y$,
\item[\rm (c)] $ B^{-1}(y_n)\,B(x_n) = I + O\left(\frac{x-y}{n^{1/3}}\right)$
where the implied constant in the $O$-term is uniform with respect to $x$ and $y$.
\end{enumerate}
\end{lemma}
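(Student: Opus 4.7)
Parts (a) and (b) are essentially bookkeeping. Part (a) is exactly the statement of Lemma \ref{matchinglemma1}. For part (b), I would Taylor expand the conformal map $f_t$ at the origin: since $f_t(x_n) - f_t(0) = f_t'(0)\,x_n + O(x_n^2)$ and $x_n = x/(c_1 n)^{2/3}$, multiplying by $n^{2/3}$ yields
\[
n^{2/3}(f_t(x_n) - f_t(0)) = \frac{f_t'(0)\,x}{c_1^{2/3}} + O(n^{-2/3}).
\]
Setting $t = 1$ in \eqref{ftprime} (where $b_1 = 0$) and invoking \eqref{c1Vformula} gives $f_1'(0) = c_{1,V}^{2/3} = c_1^{2/3}$. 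Continuity of $f_t'(0)$ in $t$ then forces $f_t'(0) \to c_1^{2/3}$ as $t \to 1$, so the right-hand side tends to $x$. The argument for $y_n$ is identical.

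Part (c) is the main obstacle and requires careful tracking of the $n$-dependence of $B = RE$. The key size estimates on $\partial U^{(0)}$ are: (i) $R(z) = I + O(n^{-1/3})$ by \eqref{RclosetoI}; (ii) the factor $(n^{2/3}(f_t(z) - f_t(0)))^{\sigma_3/4}$ in \eqref{Edef} has entries of order $n^{\pm 1/6}$, while $P^{(\infty)}(z)$, $z^{\alpha\sigma_3}$, and the constant rotation matrix are all $O(1)$, so $E(z) = O(n^{1/6})$ entrywise; (iii) a direct computation shows $\det E = \det P^{(\infty)} \equiv 1$, so $E^{-1}$ has entries of the same order, $O(n^{1/6})$. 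Since $E$ is analytic in $U^{(0)}$ (as remarked after \eqref{Edef}) and $R$ is analytic on $U^{(0)}$ because $S$ and $P^{(0)}$ share their jumps there, both $B$ and $B^{-1}$ are analytic on $U^{(0)}$, and by maximum modulus the $O(n^{1/6})$ bounds persist throughout $U^{(0)}$.

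To conclude, I would apply Cauchy's integral formula. Since $x_n, y_n \to 0$, for large $n$ they lie in a compact subset of $U^{(0)}$, so $|z - x_n|$ and $|z - y_n|$ are bounded below uniformly for $z \in \partial U^{(0)}$. Hence
\[
B(x_n) - B(y_n) = (x_n - y_n)\,\frac{1}{2\pi i} \int_{\partial U^{(0)}} \frac{B(z)\,dz}{(z-x_n)(z-y_n)} = O\bigl(|x_n - y_n|\,n^{1/6}\bigr) = O\!\left(\frac{x-y}{n^{1/2}}\right),
\]
using $|x_n - y_n| = |x-y|/(c_1 n)^{2/3}$. Writing $B^{-1}(y_n)B(x_n) - I = B^{-1}(y_n)\bigl(B(x_n) - B(y_n)\bigr)$ and applying $|B^{-1}(y_n)| = O(n^{1/6})$ yields the claimed $O((x-y)/n^{1/3})$ bound. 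The delicate point is that the two factors of $n^{1/6}$ (one from $B^{-1}$, one from the Cauchy estimate) must combine with the $n^{-1/2}$ gain from $|x_n - y_n|$ to give exactly $n^{-1/3}$; this balance relies crucially on $\det E = 1$, which keeps $E^{-1}$ no worse in magnitude than $E$.
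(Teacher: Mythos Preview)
Your proof is correct and rests on the same ingredients as the paper's: analyticity of $R$ and $E$ on $U^{(0)}$, the bounds $R = I + O(n^{-1/3})$ and $E = O(n^{1/6})$, the crucial identity $\det E \equiv 1$, and a Cauchy estimate. The only organizational difference is that the paper treats $R$ and $E$ separately---first showing $R^{-1}(y_n)R(x_n) = I + O((x-y)/n)$ and $E^{-1}(y_n)E(x_n) = I + O((x-y)/n^{1/3})$, then combining---whereas you apply the Cauchy argument directly to $B = RE$; your route is slightly more streamlined, while the paper's decomposition exhibits that the $R$-contribution is in fact one order smaller than the dominant $E$-contribution.
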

\begin{proof}
(a) This  follows from Lemma \ref{matchinglemma1}.

(b) By \eqref{c1def} and \eqref{htdef} we have
$c_1 = \frac{1}{2} \sqrt{-a} h_1(0)$, so that
$f_1'(0) = c_1^{2/3}$ by \eqref{ftprime}.
Taking note of the definitions \eqref{xnyndef}, we then
obtain part (b), since $f_t \to f_1$ uniformly in $U^{(0)}$.

(c)
We have
\begin{alignat}{2} \nonumber
    R^{-1}(y_n)\,R(x_n) &= I + R^{-1}(y_n)(R(x_n) - R(y_n)) \\
    \label{Rprop}
    &= I + R^{-1}(y_n)\,(x_n - y_n) \int_0^1 R'(t x_n + (1-t) y_n) dt.
\end{alignat}
Recall that $R$ is analytic in $U^{(0)}$, and that $R(z) = I + O(n^{-1/3})$ by \eqref{RclosetoI},
uniformly in $U^{(0)}$. Since $\det R \equiv 1$, we find that $R^{-1}(y_n)$
remains bounded as $n \to \infty$. It also follows from \eqref{RclosetoI} and Cauchy's theorem,
that $R'(z) = O(n^{-1/3})$  for $z$ in a
neighborhood of the origin. By \eqref{Rprop} we then obtain
\begin{equation} \label{Bpropeq1}
    R^{-1}(y_n)\,R(x_n) = I + O\left(\frac{x-y}{n}\right).
\end{equation}
Using analyticity of $E$ in a neighborhood of the origin with
$E(z) = O(n^{1/6})$, see \eqref{Edef}, and the
fact that $\det E \equiv 1$, we obtain in the same way
\begin{equation} \label{Bpropeq2}
    E^{-1}(y_n)\,E(x_n) = I + O\left(\frac{x-y}{n^{1/3}} \right).
\end{equation}
The implied constants in \eqref{Bpropeq1} and \eqref{Bpropeq2}
are independent of $x$ and $y$.

Using \eqref{Bpropeq1},  \eqref{Bpropeq2}, and the fact that
$E(x_n) = O(n^{1/6})$ and $E^{-1}(y_n) = O(n^{1/6})$, we obtain from \eqref{Bdef}
\begin{alignat*}{2}
B^{-1}(y_n)\,B(x_n) &= E^{-1}(y_n)\,\left(I + O\left(\frac{x-y}{n}\right)\right)\,E(x_n) \\
                    &= E^{-1}(y_n)\,E(x_n) +  O(n^{1/6}) \,O\left( \frac{x-y}{n} \right)\,O(n^{1/6}) \\
                    &= I + O\left(\frac{x-y}{n^{1/3}}\right).
\end{alignat*}
This completes the proof of part (c).
\end{proof}

\begin{varproof} \textbf{of Theorems \ref{theorem1} and \ref{theorem2}.}
We let $n,N \to \infty$, $t=n/N \to 1$, in such a way that $n^{2/3}(t - 1) \to L$.
Then by parts (a) and (b) of Lemma \ref{Bprop}, we have
\[ \vec{\psi}_{\alpha}(n^{2/3} (f_t(x_n) - f_t(0)); n^{2/3} f_t(0))
    \to \vec{\psi}_{\alpha}(x;s) \]
and similarly if we replace $x_n$ by $y_n$. The existence of the
limit \eqref{KnNlimit} then follows easily from Lemma
\ref{CD-kernelFinite} and part (c) of Lemma \ref{Bprop}, which
proves Theorem \ref{theorem1}.

We also find that the limiting kernel $K_{\alpha}^{edge}(x,y;s)$ is given by
\begin{alignat}{2} \nonumber
K_{\alpha}^{edge}(x,y;s) & =
    \frac{1}{2\pi i(x-y)}
        \vec{\psi}_{\alpha}(y;s)^T \begin{pmatrix} 0 & 1 \\ -1 & 0 \end{pmatrix}
        \vec{\psi}_{\alpha}(x;s)
\end{alignat}
and so \eqref{Kintform} follows because of \eqref{vecpsidef}. The
model RH problem is solvable for every $s \in \mathbb R$ by
Proposition \ref{ExistenceProp} and so we have also proved Theorem
\ref{theorem2}.
\end{varproof}

\section{Proof of Theorems \ref{theorem3} and \ref{theorem4}}
\label{section3}

We prove Theorem \ref{theorem3} and Theorem \ref{theorem4} by first establishing,
with the help of \cite{BBIK},
a connection between the model RH problem and the RH problem for
Painlev\'e II in the form due to Flaschka and Newell \cite{FN}.
We can then use known properties of the RH problem for Painlev\'e II
to prove the theorems.

\subsection{The Painlev\'e II RH problem} \label{FNRHP}

We review the RH problem for the Painlev\'e II equation
$q''(s) = sq + 2q^3 - \nu$, as first given by Flaschka and Newell \cite{FN},
see also \cite{FIKN} and \cite{FZ}.
We will assume that
\[ \nu > - 1/2. \]
The RH problem involves three complex constants $a_1$, $a_2$, $a_3$
satisfying
\begin{equation} \label{a1a2a3variety}
    a_1 + a_2 + a_3 + a_1 a_2 a_3 = - 2i \sin \nu \pi,
    \end{equation}
and certain connection matrices $E_j$.

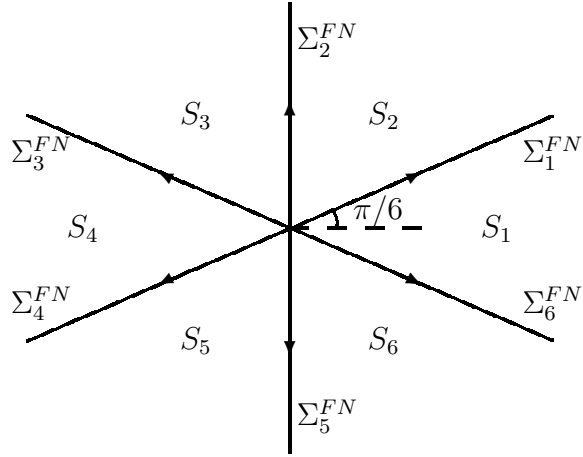
\begin{figure}[t]
\ifx\JPicScale\undefined\def\JPicScale{1}\fi
\unitlength \JPicScale mm
\begin{picture}(80,60)(-30,10)
\linethickness{0.3mm}
\multiput(10,55)(0.28,-0.12){125}{\line(1,0){0.28}}
\linethickness{0.3mm} \put(45,40){\line(0,1){30}}
\linethickness{0.3mm}
\multiput(45,40)(0.28,0.12){125}{\line(1,0){0.28}}
\linethickness{0.3mm} \put(45,10){\line(0,1){30}}
\linethickness{0.3mm}
\multiput(45,40)(0.28,-0.12){125}{\line(1,0){0.28}}
\linethickness{0.3mm}
\multiput(10,25)(0.28,0.12){125}{\line(1,0){0.28}}
\put(80,50){\makebox(0,0)[cc]{$\Sigma_1^{FN}$}}

\put(50,65){\makebox(0,0)[cc]{$\Sigma_2^{FN}$}}

\put(12,50){\makebox(0,0)[cc]{$\Sigma_3^{FN}$}}

\put(12,30){\makebox(0,0)[cc]{$\Sigma_4^{FN}$}}

\put(50,15){\makebox(0,0)[cc]{$\Sigma_5^{FN}$}}

\put(80,30){\makebox(0,0)[cc]{$\Sigma_6^{FN}$}}

\put(72.5,40){\makebox(0,0)[cc]{$S_1$}}

\put(57.5,55){\makebox(0,0)[cc]{$S_2$}}

\put(32.5,55){\makebox(0,0)[cc]{$S_3$}}

\put(17.5,40){\makebox(0,0)[cc]{$S_4$}}

\put(32.5,25){\makebox(0,0)[cc]{$S_5$}}

\put(57.5,25){\makebox(0,0)[cc]{$S_6$}}

\put(57.5,12.5){\makebox(0,0)[cc]{}}


\thicklines \put(45,23){\line(0,1){1}}
\put(45,23){\vector(0,-1){0.12}} \thicklines
\put(45,57){\line(0,1){1}} \put(45,57){\vector(0,1){0.12}}
\thicklines \multiput(27.5,47.5)(0.27,-0.12){16}{\line(1,0){0.27}}
\put(27.5,47.5){\vector(-2,1){0.12}} \thicklines
\multiput(27.5,32.5)(0.27,0.12){16}{\line(1,0){0.27}}
\put(27.5,32.5){\vector(-2,-1){0.12}} \thicklines
\multiput(58.12,45.62)(0.27,0.12){16}{\line(1,0){0.27}}
\put(62.5,47.5){\vector(2,1){0.12}} \thicklines
\multiput(58.12,34.38)(0.27,-0.12){16}{\line(1,0){0.27}}
\put(62.5,32.5){\vector(2,-1){0.12}}

\linethickness{0.3mm}
\multiput(58.12,34.38)(0.27,-0.12){16}{\line(1,0){0.27}}
\put(62.5,32.5){\vector(2,-1){0.12}} \linethickness{0.3mm}
\put(45,40){\line(1,0){2.5}} \linethickness{0.3mm}
\put(50,40){\line(1,0){2.5}} \linethickness{0.3mm}
\put(55,40){\line(1,0){2.5}} \linethickness{0.3mm}
\put(60,40){\line(1,0){2.5}} \linethickness{0.3mm}
\multiput(51.25,40)(0.07,0.53){1}{\line(0,1){0.53}}
\multiput(51.29,41.06)(0.03,-0.53){1}{\line(0,-1){0.53}}
\multiput(51.16,41.58)(0.13,-0.52){1}{\line(0,-1){0.52}}
\multiput(50.93,42.06)(0.11,-0.24){2}{\line(0,-1){0.24}}
\multiput(50.62,42.5)(0.1,-0.15){3}{\line(0,-1){0.15}}

\put(56.75,42.25){\makebox(0,0)[cc]{$\pi / 6$}}

\end{picture}
\caption{\label{figure5} Contour for the RH problem for $\Psi_{\nu}^{FN}$.}
\end{figure}

Let $S_j = \{w \in \mathbb{C} \mid \frac{2j-3}{6}\pi < \arg w <
\frac{2j-1}{6}\pi \}$ for $j = 1,\ldots,6$,
and let $\Sigma^{FN} = \mathbb C \setminus \bigcup_j S_j$.
Then $\Sigma^{FN}$ consists of six rays $\Sigma_j^{FN}$ for $j=1,\ldots,6$,
all chosen oriented towards infinity as in Figure \ref{figure5}.
The RH problem is the following.

\paragraph{Riemann-Hilbert problem for $\Psi_{\nu}^{FN}$}
\begin{itemize}
\item $\Psi_{\nu}^{FN} : \mathbb{C} \setminus \Sigma^{FN}  \to
    \mathbb C^{2\times 2}$ is analytic,
\item $\Psi_{\nu,+}^{FN} = \Psi_{\nu,-}^{FN}
    \begin{pmatrix} 1 & 0 \\ a_1 & 1 \end{pmatrix}$
    on $\Sigma_1^{FN}$,

    $\Psi_{\nu,+}^{FN} = \Psi_{\nu,-}^{FN}
    \begin{pmatrix} 1 & a_2 \\ 0 & 1 \end{pmatrix}$
    on $\Sigma_2^{FN}$,

    $\Psi_{\nu,+}^{FN} = \Psi_{\nu,-}^{FN}
    \begin{pmatrix} 1 & 0 \\ a_3 & 1 \end{pmatrix}$
    on $\Sigma_3^{FN}$,

    $\Psi_{\nu,+}^{FN} = \Psi_{\nu,-}^{FN}
    \begin{pmatrix} 1 & a_1 \\ 0 & 1 \end{pmatrix}$
    on $\Sigma_4^{FN}$,

    $\Psi_{\nu,+}^{FN} = \Psi_{\nu,-}^{FN}
    \begin{pmatrix} 1 & 0 \\ a_2 & 1 \end{pmatrix}$
    on $\Sigma_5^{FN}$,

    $\Psi_{\nu,+}^{FN} = \Psi_{\nu,-}^{FN}
    \begin{pmatrix} 1 & a_3 \\ 0 & 1 \end{pmatrix}$
    on $\Sigma_6^{FN}$.
\item $\Psi_{\nu}^{FN}(w) = (I + O(1/w)) e^{-i(\frac{4}{3} w^3 + s w) \sigma_3}$
    as $w \to \infty$.
\item If $\nu - \frac{1}{2} \not\in \mathbb N_0$, then
\begin{equation} \label{connection0}
    \Psi_{\nu}^{FN}(w) = B(w)
    \begin{pmatrix} w^{\nu} & 0 \\ 0 & w^{-\nu} \end{pmatrix} E_j,
    \quad \text{ for } w \in S_j,
\end{equation}
where $B$ is analytic. If $\nu \in \frac{1}{2} + \mathbb N_0$, then
there exists a constant $\kappa$ such that
\begin{equation} \label{connection1}
    \Psi_{\nu}^{FN}(w) = B(w)
    \begin{pmatrix} w^{\nu} & \kappa w^{\nu}\log w \\ 0 & w^{-\nu} \end{pmatrix} E_j,
    \quad \text{ for } w \in S_j,
\end{equation}
where $B$ is analytic.
\end{itemize}

The connection matrix $E_1$ is given explicitly in \cite[Chapter 5]{FIKN}.
It is determined (up to inessential left diagonal or upper triangular factors)
by $\nu$ and the Stokes multipliers
$a_1$, $a_2$, and $a_3$, except in the special case
\begin{equation} \label{specialcase}
    \nu = \frac{1}{2} + n, \quad a_1 = a_2 = a_3 = i (-1)^{n+1},
    \quad n \in \mathbb Z, \end{equation}
where an additional parameter $c \in \mathbb C \cup \{\infty\}$ is needed.
For example, for $\nu \not\in \frac{1}{2} + \mathbb N_0$ and $1+a_1a_2 \neq 0$,
we have
\begin{equation}  \label{E1generic0}
    E_1 =
        \begin{pmatrix} d & 0 \\[10pt]
        0 & d^{-1} \end{pmatrix}
        \begin{pmatrix}  1 & \ds \frac{e^{-\nu \pi i} - a_2}{1+a_1a_2}
    \\[10pt]
    -\ds \frac{1+a_1a_2}{2\cos \nu \pi} &
    \ds \frac{e^{\nu \pi i}+a_2}{2 \cos \nu \pi}
    \end{pmatrix}, \end{equation}
where $d \neq 0$ is arbitrary. In the special case \eqref{specialcase}, when
$E_1$ depends on the additional parameter $c \in \mathbb C \cup \{\infty\}$,
by \cite[Chapter 5, (5.0.21)]{FIKN} we may take $E_1$ as
\begin{equation} \label{E1special2}
    E_1  = \begin{pmatrix}
        1 & 0 \\ c & 1 \end{pmatrix}, \quad \mbox{ if } c \in \mathbb C,
        \qquad \mbox{ while} \quad
    E_1 = \begin{pmatrix} 0 & -1 \\ 1 & 0 \end{pmatrix} \quad \mbox{ if } c = \infty.
\end{equation}
Assuming that the branch cuts for the functions in \eqref{connection0}
and \eqref{connection1} are chosen along
$\arg w = -\pi/6$, we obtain the other connection matrices
from $E_1$ through the formula
\begin{equation} \label{Ej}
    E_{j+1} = E_j v_j^{FN}, \qquad j = 1, \ldots, 5,
    \end{equation}
where $v_j^{FN}$ is the jump matrix on $\Sigma_j^{FN}$.
We shall refer to the Stokes multipliers $a_1$, $a_2$, and $a_3$, and
in the special case \eqref{specialcase} also to the additional parameter $c$,
as the monodromy data for Painlev\'e II. We note that in the special case
\eqref{specialcase} we have $\kappa = 0$ in \eqref{connection1}.

The special case \eqref{specialcase} has geometric interpretation.
Indeed, \eqref{specialcase} describes the singular point of the algebraic
variety \eqref{a1a2a3variety}, that is, the point at which the (complex) gradient
of the left-hand side of \eqref{a1a2a3variety} vanishes. The singularity
may be removed by attaching a copy of the Riemann sphere (see also \cite{IK}).

The monodromy data does not depend on $s$. The RH problem is uniquely
solvable, except for a discrete set of $s$-values, and its solution $\Psi_{\nu}^{FN}$
depends on $s$ through the asymptotic condition at infinity. We write
$\Psi_{\nu}^{FN}(w;s)$ if we want to emphasize its dependence on $s$.
If we take
\begin{equation} \label{Recoverq}
    q(s) = 2i \lim_{w \to \infty}
w \left(\Psi_{\nu}^{FN}(w;s)\right)_{12} e^{i(\frac{4}{3} w^3 + s w) \sigma_3},
\end{equation}
then $q$ satisfies the Painlev\'e II equation $q'' = sq + 2q^3 - \nu$.
In addition $\Psi_{\nu}^{FN}$ satisfies the Lax pair for Painlev\'e II
\begin{equation}
\label{FlaschkaNewellSystem1} \frac{\partial}{\partial w} \Psi = L \Psi, \quad L =
    \begin{pmatrix} - 4iw^2 - i(s+2q^2) &
        4wq + 2ir + \frac{\nu}{w} \\[10pt]
        4wq - 2ir + \frac{\nu}{w} &  4iw^2 + i(s+2q^2)
    \end{pmatrix},
\end{equation}
\begin{equation}
\label{FlaschkaNewellSystem2} \frac{\partial}{\partial s} \Psi = P \Psi, \quad P =
    \begin{pmatrix} -iw & q \\ q & iw \end{pmatrix},
\end{equation}
where $q=q(s)$ and $r = r(s) = q'(s)$.
In this way there is a one-to-one correspondence between
monodromy data and solutions of Painlev\'e II.

We also need the more precise asymptotic behavior
\begin{equation} \label{Psinuasymp}
    \Psi_{\nu}^{FN}(w;s) = \left(I + \frac{1}{2i w} \begin{pmatrix} H(s) & q(s) \\ -q(s) & -H(s) \end{pmatrix}
    + O(1/w^2)\right) e^{-i(\frac{4}{3} w^3 +sw) \sigma_3}
  \end{equation}
as $ w \to \infty$, where
\begin{equation} \label{Hdef}
H(s) = (q'(s))^2 - s q^2(s) - q^4(s) + 2\nu q(s)
\end{equation}
is the Hamiltonian for Painlev\'e II. Note that $H' = -q^2$.

We finally note that $\Psi_{\nu}^{FN}$ satisfies the symmetry property
\begin{equation} \label{FNsym}
\Psi_{\nu}^{FN}(w; s) = \sigma_{1}\Psi_{\nu}^{FN}(-w; s)\sigma_{1},
\end{equation}
where $\sigma_1 = \left(\begin{smallmatrix} 0 & 1 \\ 1 & 0 \end{smallmatrix}\right)$.
Indeed, by a straightforward calculation
(see also \cite[Chapter 5]{FIKN}) we check that the function
$ \sigma_{1}\Psi_{\nu}^{FN}(-w; s)\sigma_{1}$
solves exactly the same RH problem as the function $\Psi_{\nu}^{FN}(w; s)$.
Unique solvability of the RH problem yields equation \eqref{FNsym}.

\subsection{Connection with $\Psi_{\alpha}$}
The Hastings-McLeod solution of Painlev\'e II corresponds to the
Stokes multipliers $a_1 = -e^{\nu \pi i}$, $a_2 =0$, and
$a_3 = e^{-\nu \pi i}$. This is not the solution that interests us here.
We use instead the solution corresponding to
\begin{equation} \label{Stokesmultipliers}
a_1 = e^{-\nu \pi i},
    \qquad a_2 = -i,
    \qquad a_3 = -e^{\nu \pi i}.
\end{equation}

For these Stokes multiplies \eqref{Stokesmultipliers}  we obtain from
\eqref{E1generic0} the following connection matrix $E_1$ in case
$\nu \not\in \frac{1}{2} + \mathbb N_0$ (where we take $d = (e^{\nu \pi i} -
i)/(2\cos \nu \pi)$)
\begin{equation} \label{E1generic1}
    E_1 = \begin{pmatrix}
    \ds \frac{e^{\nu \pi i}-i}{2 \cos \nu \pi} &
    \ds \frac{i e^{\nu \pi i}+1}{2 \cos \nu \pi} \\[10pt]
    \ds -e^{-\nu\pi i} &    1
    \end{pmatrix}.
\end{equation}
For $\nu \in \frac{3}{2} + 2 \mathbb N_0$, it follows from
\eqref{Stokesmultipliers} and the formulas in \cite[Chapter 5, (5.0.18)]{FIKN}
that we can take
\begin{equation} \label{E1special1}
    E_1 = \begin{pmatrix}
        1 & 0 \\
        -i & 1
        \end{pmatrix}.
\end{equation}
If $\nu \in \frac{1}{2} + 2 \mathbb N_0$, then
we are in the special case \eqref{specialcase}.
We then choose $c=i$, so that for $\nu \in \frac{1}{2} + 2 \mathbb N_0$
we have monodromy data
\begin{equation} \label{monodromydata}
    a_1 = e^{-\nu \pi i} = -i,
    \qquad a_2 = -i,
    \qquad a_3 = -e^{\nu \pi i} = -i,
    \qquad c = i.
\end{equation}

\begin{lemma} \label{E2E3lemma}
    For any $\nu > -1/2$, we have that
    \begin{equation} \label{E2E3entries}
        (E_2)_{21} = (E_3)_{21} = 0.
        \end{equation}
\end{lemma}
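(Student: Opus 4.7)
The plan is to verify the lemma by direct matrix computation, using the cocycle relation $E_{j+1} = E_j\,v_j^{FN}$ from \eqref{Ej} together with the specific Stokes multipliers in \eqref{Stokesmultipliers} and the explicit forms of $E_1$ given in \eqref{E1generic1}, \eqref{E1special1}, and \eqref{monodromydata}. Since the jump matrix $v_2^{FN}$ is upper triangular with ones on the diagonal, we have $(E_3)_{21}=(E_2)_{21}$, so it suffices to prove $(E_2)_{21}=0$ in each of the three cases.

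In the generic case $\nu - \tfrac{1}{2} \notin \mathbb{N}_0$, the jump on $\Sigma_1^{FN}$ is $v_1^{FN} = \bigl(\begin{smallmatrix} 1 & 0 \\ a_1 & 1 \end{smallmatrix}\bigr) = \bigl(\begin{smallmatrix} 1 & 0 \\ e^{-\nu \pi i} & 1 \end{smallmatrix}\bigr)$, so
\[
(E_2)_{21} = (E_1)_{21} + (E_1)_{22}\,e^{-\nu \pi i} = -e^{-\nu \pi i} + 1\cdot e^{-\nu \pi i} = 0,
\]
using the explicit entries of $E_1$ from \eqref{E1generic1}. Next I would treat the case $\nu \in \tfrac{3}{2} + 2\mathbb{N}_0$: here $a_1 = e^{-\nu \pi i} = i$, and with $E_1$ as in \eqref{E1special1} one computes
\[
(E_2)_{21} = -i + 1\cdot i = 0.
\]
Finally, in the truly special case $\nu \in \tfrac{1}{2} + 2\mathbb{N}_0$, \eqref{monodromydata} gives $a_1 = -i$ and $c = i$, so $(E_1)_{21} = i$ and $(E_1)_{22} = 1$, yielding
\[
(E_2)_{21} = i + 1 \cdot (-i) = 0.
\]
In every case the remaining identity $(E_3)_{21} = (E_2)_{21}$ follows because $v_2^{FN} = \bigl(\begin{smallmatrix} 1 & a_2 \\ 0 & 1 \end{smallmatrix}\bigr)$ has $(2,1)$ entry equal to zero.

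There is no serious obstacle: the statement is a short algebraic check. The only conceptual point worth underlining is that the relation $(E_1)_{21} + (E_1)_{22}\,a_1 = 0$ is not a coincidence but is forced by our choice of monodromy data \eqref{Stokesmultipliers}, \eqref{monodromydata}; this choice is precisely what will later match the vanishing $(A_1)_{21}=(A_4)_{21}=0$ from Proposition~\ref{Psiat0Prop} when $\Psi_\alpha$ is reconstructed from $\Psi_\nu^{FN}$ along the lines of \cite{BBIK}. In other words, $(E_2)_{21}=(E_3)_{21}=0$ is the analytic reflection, on the Painlev\'e II side, of the triangularity condition \eqref{OneElementis0} on the model RH problem side, and this will be the key structural fact used when transferring asymptotic and Lax-pair information to $\Psi_\alpha$ in the remainder of Section~\ref{section3}.
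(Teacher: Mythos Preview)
Your proof is correct and follows essentially the same approach as the paper: both use the relation $E_{j+1}=E_j v_j^{FN}$, verify case by case that the second row of $E_1$ equals $(-a_1,\,1)$ so that $E_2$ is upper triangular, and then observe that the upper-triangular jump $v_2^{FN}$ preserves this for $E_3$. The paper states the second-row observation once uniformly rather than splitting into the three cases, but the computation is identical.
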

\begin{proof}
In all cases we may check from \eqref{Stokesmultipliers}, \eqref{E1generic1},
\eqref{E1special1}, \eqref{monodromydata}, and \eqref{E1special2} that the
second row of $E_1$ is given by $\begin{pmatrix} -a_1 & 1 \end{pmatrix}$.
So by \eqref{Ej} we have that
$E_2 = E_1 \left(\begin{smallmatrix} 1 & 0 \\ a_1 & 1 \end{smallmatrix}\right)$
is upper triangular. Then also
$E_3 = E_2 \left(\begin{smallmatrix} 1 & a_2 \\ 0 & 1 \end{smallmatrix}\right)$
is upper triangular and therefore \eqref{E2E3entries} holds.
\end{proof}

The following proposition holds for more general monodromy data, and it was established
in \cite{BBIK}, see also \cite{KH}. For the reader's convenience we present
a detailed proof for our particular case.

\begin{proposition} \label{Prop:PsiFNtoPsialpha} {\rm (\cite{BBIK})}
For $\alpha > -1/2$, let $\Psi_{2\alpha+1/2}^{FN}$ be the unique
solution of the RH problem for Painlev\'e II
with parameter $\nu = 2 \alpha + 1/2$ and monodromy data
\eqref{Stokesmultipliers} in case $\alpha \not\in \mathbb N_0$ (so that
$\nu \not\in \frac{1}{2} + 2 \mathbb N_0$), and
monodromy data \eqref{monodromydata} in the special case
$\alpha \in \mathbb N_0$. Then, for any $\eta = \eta(s)$, we
have that
\begin{equation}
\label{PsiFNtoPsialpha}
\Psi_{\alpha}(\zeta;s) = \begin{pmatrix} 1 & 0 \\ \eta(s) & 1 \end{pmatrix}
    \zeta^{-\sigma_3/4} \frac{1}{\sqrt{2}} \begin{pmatrix}
    1 & i \\ i & 1 \end{pmatrix} e^{\pi i \sigma_3/4}
    \Psi_{2\alpha + 1/2}^{FN}(w;-2^{1/3}s) e^{-\pi i \sigma_3/4}
\end{equation}
where $w = e^{\pi i/2} 2^{-1/3} \zeta^{1/2}$ with $\Im w > 0$,
is a solution of the model RH problem for $\Psi_{\alpha}$
given in Subsection \ref{modelRHP1}.
\end{proposition}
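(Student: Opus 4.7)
The plan is to verify that the function defined by the right-hand side of \eqref{PsiFNtoPsialpha} satisfies conditions (a)--(d) of the model RH problem of Subsection \ref{modelRHP1}. Since multiplication on the left by $\left(\begin{smallmatrix} 1 & 0 \\ \eta & 1 \end{smallmatrix}\right)$ preserves the model RH problem by Proposition \ref{nonunique}(b), it suffices to treat the case $\eta(s) = 0$; call the resulting function $\tilde{\Psi}_{\alpha}(\zeta;s)$. The central observation is that the map $\zeta \mapsto w = i\,2^{-1/3}\zeta^{1/2}$ (principal branch) is a conformal bijection from $\mathbb{C}\setminus(-\infty,0]$ onto the upper half $w$-plane, carrying $\Omega_1,\Omega_2,\Omega_3,\Omega_4$ onto portions of $S_3,S_4,S_1,S_2$ respectively, and the rays $\Sigma_1,\Sigma_2,\Sigma_4$ onto $\Sigma_2^{FN},\Sigma_3^{FN},\Sigma_1^{FN}$. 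Analyticity of $\tilde{\Psi}_{\alpha}$ on $\mathbb{C}\setminus\Sigma$ is then immediate. The asymptotic condition (c) follows from \eqref{Psinuasymp} once one observes the identity $-i(\tfrac{4}{3}w^3 + s^{FN}w) = -(\tfrac{2}{3}\zeta^{3/2}+s\zeta^{1/2})$ with $s^{FN} = -2^{1/3}s$, together with the fact — already established in the proof of Proposition \ref{nonunique}(b) — that left multiplication by $\left(\begin{smallmatrix}1&0\\\eta&1\end{smallmatrix}\right)$ only modifies the subleading $O(\zeta^{-1/2})$ remainder.

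The jumps on $\Sigma_1,\Sigma_2,\Sigma_4$ come entirely from the jumps of $\Psi^{FN}$ on $\Sigma_2^{FN},\Sigma_3^{FN},\Sigma_1^{FN}$, since $\zeta^{-\sigma_3/4}$ is analytic there. A check of orientations shows that the sector map preserves orientation on $\Sigma_1$ but reverses it on $\Sigma_2$ and $\Sigma_4$. Conjugating by $e^{\pi i\sigma_3/4}$ (and inverting where the orientation is reversed) and substituting the Stokes multipliers from \eqref{Stokesmultipliers} together with $\nu = 2\alpha + 1/2$, a routine matrix computation produces exactly $\left(\begin{smallmatrix}1&1\\0&1\end{smallmatrix}\right)$, $\left(\begin{smallmatrix}1&0\\e^{2\alpha\pi i}&1\end{smallmatrix}\right)$, $\left(\begin{smallmatrix}1&0\\e^{-2\alpha\pi i}&1\end{smallmatrix}\right)$ on $\Sigma_1,\Sigma_2,\Sigma_4$ respectively. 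The exceptional case $\alpha \in \mathbb{N}_0$ is handled identically using \eqref{monodromydata} in place of \eqref{Stokesmultipliers}.

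The main obstacle is the jump on $\Sigma_3$, where two phenomena occur simultaneously: the factor $\zeta^{-\sigma_3/4}$ is itself discontinuous across $(-\infty,0]$, and the branch cut forces $w_+ = -w_-$, so that $\Psi^{FN}$ is evaluated on the two sides of $\Sigma_3$ at reflected points. My strategy is first to compute, using $(\zeta^{-\sigma_3/4}_-)^{-1}\zeta^{-\sigma_3/4}_+ = e^{-i\pi\sigma_3/2}$ and an elementary manipulation, that the multiplicative jump of the prefactor $\zeta^{-\sigma_3/4}\,\tfrac{1}{\sqrt{2}}\left(\begin{smallmatrix}1&i\\i&1\end{smallmatrix}\right)e^{\pi i\sigma_3/4}$ on $\Sigma_3$ equals $-i\sigma_1$, and second to invoke the symmetry \eqref{FNsym} in the form $\Psi^{FN}(w_+) = \sigma_1\,\Psi^{FN}(w_-)\,\sigma_1$ to account for the reflection. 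The two $\sigma_1$ factors (one from the prefactor jump, one from the symmetry) cancel, and one is left with $v_3^{\alpha} = -i\,e^{\pi i\sigma_3/4}\sigma_1\,e^{-\pi i\sigma_3/4} = \left(\begin{smallmatrix}0&1\\-1&0\end{smallmatrix}\right)$, exactly as required.

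Finally, condition (d) is verified by substituting the local representation \eqref{connection0} (or \eqref{connection1} in the logarithmic case) of $\Psi^{FN}$ near $w = 0$ into \eqref{PsiFNtoPsialpha} and analyzing the resulting $\zeta$-expansion in each sector. With $\nu = 2\alpha + 1/2$, the diagonal factor $w^{\nu\sigma_3}$ combines with the $\zeta^{-\sigma_3/4}$ prefactor so that the leading $\zeta$-behavior on the diagonal is $\zeta^{\alpha\sigma_3}$. In the sectors $\Omega_1$ and $\Omega_4$ (mapping to $S_3$ and $S_2$), the vanishing of $(E_3)_{21}$ and $(E_2)_{21}$ provided by Lemma \ref{E2E3lemma} is essential for achieving the column-$1$ estimate $O(|\zeta|^\alpha)$ demanded by (d); the remaining entries, and the estimates in $\Omega_2 \cup \Omega_3$, follow from the generic local behavior of $\Psi^{FN}$ together with the specific structure of the prefactor, which together ensure compatibility with the prescribed bounds.
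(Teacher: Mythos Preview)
Your treatment of analyticity, the asymptotics at infinity, and the jumps on $\Sigma_1,\Sigma_2,\Sigma_3,\Sigma_4$ is correct and follows the same route as the paper. In particular your computation of the $\Sigma_3$ jump via the prefactor identity $P_-^{-1}P_+=-i\sigma_1$ combined with the symmetry \eqref{FNsym} is a clean reorganization of the paper's step-by-step calculation.

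There is, however, a genuine gap in your verification of condition (d). You write that ``the diagonal factor $w^{\nu\sigma_3}$ combines with the $\zeta^{-\sigma_3/4}$ prefactor so that the leading $\zeta$-behavior on the diagonal is $\zeta^{\alpha\sigma_3}$.'' But these two diagonal factors are separated by the \emph{non-diagonal} matrix
\[
\frac{1}{\sqrt{2}}\begin{pmatrix}1&i\\ i&1\end{pmatrix}e^{\pi i\sigma_3/4}\,B(w),
\]
where $B$ is the analytic prefactor in \eqref{connection0}--\eqref{connection1}. If one only uses that $B(0)$ is an arbitrary invertible matrix, then the $(1,2)$ entry of $\zeta^{-\sigma_3/4}\cdot O(1)\cdot \zeta^{(\alpha+1/4)\sigma_3}$ is of order $|\zeta|^{-\alpha-1/2}$, strictly worse than the $O(|\zeta|^{-\alpha})$ required by (d). Lemma \ref{E2E3lemma} controls the $E_j$'s on the right, but it cannot repair this excess singularity on the left.

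The missing ingredient is that the symmetry \eqref{FNsym}, applied at $w=0$, forces the structural relation $B(0)\sigma_3=\sigma_1 B(0)$, so that
\[
B(0)=\frac{1}{\sqrt{2}}\begin{pmatrix}1&-1\\1&1\end{pmatrix}\begin{pmatrix}b&0\\0&b^{-1}\end{pmatrix}
\]
for some $b\neq 0$. With this form one checks directly that $\frac{1}{\sqrt{2}}\left(\begin{smallmatrix}1&i\\i&1\end{smallmatrix}\right)e^{\pi i\sigma_3/4}B(0)e^{-\pi i\sigma_3/4}$ is \emph{diagonal}, which is exactly what allows $\zeta^{-\sigma_3/4}$ to commute past it and combine with the $\zeta^{\sigma_3/4}$ hidden in $w^{\nu\sigma_3}$. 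This is the computation \eqref{Psialphaat0} in the paper; once it is in place, Lemma \ref{E2E3lemma} finishes the argument in $\Omega_1\cup\Omega_4$ as you describe. Your proof needs this step.
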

\begin{proof}
Because of Proposition \eqref{nonunique} we may take $\eta(s) =0$
without loss of generality.

Clearly $\Psi_{\alpha}$ is analytic on $\mathbb C \setminus \Sigma$.
The correct asymptotics as $\zeta \to \infty$ follows immediately,
as well as the correct jumps across $\Sigma_{1}$, $\Sigma_{2}$,
and $\Sigma_{4}$. A little bit more work is needed to
check the jump across $\Sigma_{3} = (-\infty,0)$ and the behavior
at $z=0$.

In order to analyze the jump across $\Sigma_{3}$, we suppose
that  $\zeta \in \Sigma_{3}$.  Then we have that
\[
w_{+} \equiv e^{\pi i/2} 2^{-1/3} \zeta^{1/2}_{+} = -w_{-}
\equiv  - e^{\pi i/2} 2^{-1/3} \zeta^{1/2}_{-} \quad ( < 0 ),
\]
and hence by \eqref{PsiFNtoPsialpha} and the symmetry property
\eqref{FNsym},
\begin{alignat*}{2}
\Psi_{\alpha, +}(\zeta; s) &=
\zeta^{-\sigma_3/4}_{+} \frac{1}{\sqrt{2}} \begin{pmatrix} 1 & i \\ i & 1 \end{pmatrix} e^{\pi i \sigma_3/4}
\Psi_{2\alpha + 1/2}^{FN}(w_{+};-2^{1/3}s) e^{-\pi i \sigma_3/4}\\
&= \zeta^{-\sigma_3/4}_{-}\begin{pmatrix} -i & 0 \\ 0 & i \end{pmatrix} \frac{1}{\sqrt{2}}
\begin{pmatrix} 1 & i \\ i & 1 \end{pmatrix} e^{\pi i \sigma_3/4}
\Psi_{2\alpha + 1/2}^{FN}(-w_{-};-2^{1/3}s) e^{-\pi i \sigma_3/4}\\
&=\zeta^{-\sigma_3/4}_{-}\
 \frac{1}{\sqrt{2}} \begin{pmatrix}
-i & 1 \\ -1 & i \end{pmatrix} e^{\pi i \sigma_3/4}
\sigma_{1}\Psi_{2\alpha + 1/2}^{FN}(w_{-};-2^{1/3}s)\sigma_{1} e^{-\pi i \sigma_3/4}\\
&=\zeta^{-\sigma_3/4}_{-}\
 \frac{1}{\sqrt{2}} \begin{pmatrix}
-i & 1 \\ 1 & -i \end{pmatrix}
e^{\pi i \sigma_3/4}
\Psi_{2\alpha + 1/2}^{FN}(w_{-};-2^{1/3}s) e^{-\pi i \sigma_3/4}
 \begin{pmatrix}
0 & i \\ -i & 0 \end{pmatrix}\\
&=\zeta^{-\sigma_3/4}_{-}\
 \frac{1}{\sqrt{2}} \begin{pmatrix}
1 & i \\ i & 1 \end{pmatrix}
e^{\pi i \sigma_3/4}
\Psi_{2\alpha + 1/2}^{FN}(w_{-};-2^{1/3}s) e^{-\pi i \sigma_3/4}
 \begin{pmatrix}
0 & 1\\ -1 & 0 \end{pmatrix}\\
&= \Psi_{\alpha, -}(\zeta; s)
 \begin{pmatrix}
0 & 1\\ -1 & 0 \end{pmatrix}.
\end{alignat*}
This shows that $\Psi_{\alpha}$ has the correct jump across $\Sigma_3$,
and it follows that $\Psi_{\alpha}$ satisfies the parts (a), (b), and (c)
of the model RH problem.

Consider now a neighborhood of the point $\zeta =0$. We recall that
\eqref{connection0} or \eqref{connection1} holds with $B(w)$ analytic at $0$.
A corollary  of the symmetry property \eqref{FNsym} is the
equation
\[
B(w)  = \left\{ \begin{array}{ll}
    \sigma_{1}B(-w)\sigma_{3},
    & \text{ if } \nu \not\in \frac{1}{2} + \mathbb N_0, \\[10pt]
     \sigma_1 B(-w)
    \begin{pmatrix} 1 & O(w^{2\nu}) \\ 0 & -1 \end{pmatrix} \text{ as } w \to 0,
    & \text{ if } \nu \in \frac{1}{2} + \mathbb N_0,
     \end{array} \right.
\]
which yields the formula (cf.\ \cite[Chapter 5]{FIKN})
\[
    B(0)\sigma_{3} = \sigma_{1}B(0).
\]
The last relation, together with $\det B(0) = 1$, in turn implies that $B(0)$
can be represented in the form
\[
B(0) = \frac{1}{\sqrt{2}}\begin{pmatrix}
1 & -1 \\ 1 & 1\end{pmatrix}
\begin{pmatrix}
b & 0 \\ 0 & b^{-1}\end{pmatrix}, \qquad b \neq 0.
\]
If $\zeta \in \Omega_j$ then $w \in S_{\pi(j)}$, $j = 1,2,3,4$,
where $\pi$ denotes the permutation
\[ \pi = \begin{pmatrix} 1 & 2 & 3 & 4 \\ 3 & 4 & 1 & 2 \end{pmatrix}. \]
Therefore, for the function $\Psi_{\alpha}(\zeta;s)$ defined by
equation \eqref{PsiFNtoPsialpha} with $\eta(s) = 0$,
we find that (assuming that $\alpha \not\in \frac{1}{2}\mathbb N_0$)
\begin{alignat}{2} \nonumber
\Psi_{\alpha}(\zeta;s) &=  \zeta^{-\sigma_3/4} \frac{1}{\sqrt{2}} \begin{pmatrix}
1 & i \\ i & 1 \end{pmatrix} e^{\pi i \sigma_3/4}B(0)
 e^{-\pi i \sigma_3/4} \left(I + O(\zeta^{1/2})\right) \zeta ^{\sigma_{3}/4}
 \zeta^{\alpha \sigma_{3}} \widetilde{E}_{\pi(j)}\\ \nonumber
&= \zeta^{-\sigma_3/4} \frac{1}{2}
\begin{pmatrix}
1 & i \\ i & 1 \end{pmatrix}
\begin{pmatrix}
1 & -i \\ -i &1\end{pmatrix}
\begin{pmatrix}
b & 0 \\ 0 & b^{-1}\end{pmatrix}
\left(I + O(\zeta^{1/2})\right) \zeta ^{\sigma_{3}/4}
 \zeta^{\alpha \sigma_{3}}\widetilde{E}_{\pi(j)}\\ \nonumber
&= \zeta^{-\sigma_3/4} \left(I + O(\zeta^{1/2})\right) \zeta ^{\sigma_{3}/4}
 \zeta^{\alpha \sigma_{3}}\begin{pmatrix}
b & 0 \\ 0 & b^{-1}\end{pmatrix}\widetilde{E}_{\pi(j)}\\
\label{Psialphaat0}
&=  O(1) \zeta^{\alpha \sigma_{3}}\begin{pmatrix}
b & 0 \\ 0 & b^{-1}\end{pmatrix}\widetilde{E}_{\pi(j)},
    \qquad \text{ as $\zeta \to 0$ in $\Omega_j$},
\end{alignat}
where we have introduced the notation
\begin{equation} \label{deftildeEj}
\widetilde{E}_{j} \equiv
e^{\pi i \sigma_3 / 4} \left(e^{\pi i/2}2^{-1/3}\right)^{(2\alpha + 1/2)\sigma_{3}}E_{j} e^{-\pi i \sigma_3 / 4}.
\end{equation}
From \eqref{Psialphaat0} it immediately follows that
$\Psi_{\alpha}(\zeta;s) = O(\zeta^{-|\alpha|})$ as $\zeta \to 0$,
which is the required behavior in the model RH problem  if $\alpha < 0$, or if $\alpha \geq 0$ and $j \in\{2,3\}$.
If $\alpha \geq 0$ and $j\in \{1,4\}$, then
$\pi(j) \in \{2,3\}$, and it follows from Lemma \ref{E2E3lemma}
and \eqref{deftildeEj} that
\[ \left(\widetilde{E}_{\pi(j)}\right)_{21} = \left(E_{\pi(j)}\right)_{21} = 0. \]
Then \eqref{Psialphaat0} also yields the required behavior of
$\Psi_{\alpha}(\zeta;s)$ as $\zeta \to 0$ in $\Omega_1 \cup \Omega_4$.

The calculation leading to \eqref{Psialphaat0} is valid for $\nu \not\in \frac{1}{2} + \mathbb N_0$,
or $\alpha \not \in \frac{1}{2} \mathbb N_0$. In fact it is also valid if $\alpha \in \mathbb N_0$,
since then we are in the special case \eqref{specialcase} where $\kappa =  0$ in \eqref{connection1}
and so no logarithmic terms appear.
Logarithmic terms only appear if $\alpha \in \frac{1}{2} + \mathbb N_0$, and then
a similar calculation leads to
\[ \Psi_{\alpha}(\zeta;s) = O(1) \zeta^{\alpha \sigma_3}
    \begin{pmatrix} b & 0 \\ 0 & b^{-1} \end{pmatrix}
    \begin{pmatrix} 1 & O(\log \zeta) \\ 0 & 1 \end{pmatrix}
    \widetilde{E}_{\pi(j)}, \]
with $\widetilde{E}_j$ again given by \eqref{deftildeEj}.
Since $\alpha > 0$, the required behavior as $\zeta \to 0$ then follows in a similar way.

This completes the proof of the proposition.
\end{proof}

\subsection{Differential equation}
Recall that $\Psi^{FN}_{\nu}$ has the Lax pair \eqref{FlaschkaNewellSystem1}
and \eqref{FlaschkaNewellSystem2}. Then $\Psi_{\alpha}$ defined by \eqref{PsiFNtoPsialpha}
also satisfies a system of differential equations.
It will involve the solution $q$ of the Painlev\'e II equation with parameter $\nu = 2\alpha + 1/2$
and monodromy data \eqref{Stokesmultipliers} or \eqref{monodromydata}.
We put $r = q'$ and
\begin{alignat}{2}
\label{Udef}
U(s) &= q^2(s) + r(s) + \frac{s}{2},\\
\label{Vdef}
V(s) &= q^2(s) - r(s) + \frac{s}{2}.
\end{alignat}
The functions $U$ and $V$ both satisfy the Painlev\'e XXXIV
equation in a form similar to \eqref{painleve34}, namely
(cf.\ \cite[Chapter 5]{FIKN}):
\begin{alignat}{2} \label{Uequation}
U''(s) &= \frac{(U'(s))^2}{2U(s)} +
2U^2(s) - sU(s) -
\frac{(2\alpha)^2}{2U(s)}, \\
V''(s) &= \label{Vequation}
\frac{(V'(s))^2}{2V(s)} +
2V^2(s) - sV(s) -
\frac{(2\alpha + 1)^2}{2V(s)}.
\end{alignat}

Then we obtain the following differential equations for $\Psi_{\alpha}$.

\begin{lemma} \label{LaxPairP34}
Let $\Psi_{\alpha}$ be given by \eqref{PsiFNtoPsialpha}.
\begin{enumerate}
\item[\rm (a)]
If $\eta \equiv 0$, then $\Psi_{\alpha}$ satisfies
\begin{alignat}{2} \label{DEPsiA}
\frac{\partial}{\partial \zeta} \Psi_{\alpha}(\zeta;s) &= A \Psi_{\alpha}(\zeta;s), \\
\label{DEPsiB}
\frac{\partial}{\partial s} \Psi_{\alpha}(\zeta;s) &= B \Psi_{\alpha}(\zeta;s),
\end{alignat}
where
\begin{alignat}{2} \label{Adef}
A &=
\begin{pmatrix} -2^{1/3} q(-2^{1/3}s) + \frac{\alpha}{\zeta}
& i - i 2^{-1/3} U(-2^{1/3}s)\frac{1}{\zeta} \\[5pt]
-i \zeta + i 2^{-1/3} V(-2^{1/3}s) & 2^{1/3} q(-2^{1/3}s) -
\frac{\alpha}{\zeta} \end{pmatrix}, \\[10pt]
B &= \begin{pmatrix} -2^{1/3} q(-2^{1/3}s) & i
\\[5pt] -i\zeta & 2^{1/3} q(-2^{1/3}s)\end{pmatrix}.
\end{alignat}
\item[\rm (b)] For general $\eta$ we have that $\Psi_{\alpha}$
satisfies
\begin{alignat}{2} \label{DEPsi2}
\frac{\partial}{\partial \zeta} \Psi_{\alpha}(\zeta;s) &=
\begin{pmatrix} 1 & 0 \\ \eta(s) & 1 \end{pmatrix} A
\begin{pmatrix} 1 & 0 \\ -\eta(s) & 1 \end{pmatrix} \Psi_{\alpha}(\zeta;s),
\end{alignat}
with $A$ given by \eqref{Adef}.
\end{enumerate}
\end{lemma}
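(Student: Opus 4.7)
The plan is to transport the Flaschka--Newell Lax pair \eqref{FlaschkaNewellSystem1}--\eqref{FlaschkaNewellSystem2}, satisfied by $\Psi^{FN}_{\nu}$ with $\nu = 2\alpha+1/2$, through the change of variables and gauge transformation that defines $\Psi_\alpha$ in \eqref{PsiFNtoPsialpha}. Throughout, write
\[
    M(\zeta) = \zeta^{-\sigma_3/4} \frac{1}{\sqrt{2}} \begin{pmatrix} 1 & i \\ i & 1 \end{pmatrix} e^{\pi i \sigma_3/4},
    \qquad w = e^{\pi i/2} 2^{-1/3} \zeta^{1/2}, \qquad \tilde s = -2^{1/3} s,
\]
so that for $\eta \equiv 0$,
\[
    \Psi_{\alpha}(\zeta;s) = M(\zeta)\, \Psi^{FN}_{\nu}(w;\tilde s)\, e^{-\pi i \sigma_3/4}.
\]
The argument for part (a) then proceeds in three computational steps.

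First I would differentiate in $s$: since $M$ and the right-hand diagonal factor are independent of $s$, the chain rule gives
\[
    \partial_s \Psi_{\alpha} = -2^{1/3} M(\zeta)\, P(w;\tilde s) \Psi^{FN}_{\nu}\, e^{-\pi i \sigma_3/4},
\]
with $P$ from \eqref{FlaschkaNewellSystem2}. A direct check, using
$\frac{1}{\sqrt{2}}\binom{1\ i}{i\ 1} e^{\pi i \sigma_3/4}(-iw\sigma_3 + q\sigma_1)e^{-\pi i \sigma_3/4} \frac{1}{\sqrt{2}}\binom{1\ {-i}}{{-i}\ 1}^{-1}$
and the identity $-i w \cdot (-2^{1/3}) = -i \cdot e^{\pi i/2} 2^{-1/3} \zeta^{1/2}\cdot (-2^{1/3}) = \zeta^{1/2}$ (so that the $w$-term produces an off-diagonal matrix with entries $\mp i\zeta^{1/2} \cdot \zeta^{\mp 1/2}$), collapses to the matrix $B$ asserted in the statement.

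Second I would differentiate in $\zeta$. Here $M'(\zeta) = -\tfrac{1}{4}\zeta^{-1} \sigma_3 M(\zeta)$, and $\partial_\zeta w = w/(2\zeta)$, so
\[
    \partial_\zeta \Psi_{\alpha} = \left( -\tfrac{1}{4\zeta}\sigma_3 + \tfrac{w}{2\zeta} M(\zeta) L(w;\tilde s) M(\zeta)^{-1} \right) \Psi_{\alpha},
\]
with $L$ from \eqref{FlaschkaNewellSystem1}. This is where the main algebraic work sits: one must expand $L$ as a polynomial in $w$ with the coefficients $-i(\tilde s + 2q^2)\sigma_3$, $4q w \sigma_1 + 2ir[\sigma_1,\tfrac{1}{2}\sigma_3]$-type terms, and $\tfrac{\nu}{w}$, multiply by $w/(2\zeta)$, and then conjugate by $M(\zeta)$. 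The conjugation turns $\sigma_3 \leftrightarrow \sigma_1$ up to signs, the $w^2$ piece produces the $-i\zeta$ in the lower-left entry, and the constant and $w^{-2}$ pieces combine with the $-\tfrac{1}{4\zeta}\sigma_3$ term to give the diagonal $\pm(-2^{1/3}q + \alpha/\zeta)$ together with the off-diagonal contributions $i - i 2^{-1/3} U/\zeta$ and $i 2^{-1/3} V$. Using $\nu = 2\alpha + 1/2$, the coefficient of $1/\zeta$ in the (12) entry simplifies via $q^2 + r + s/2 = U(-2^{1/3}s)$ evaluated at $\tilde s$, after rescaling; similarly the constant (21)-entry collapses to $V(-2^{1/3}s)$. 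So the main obstacle is bookkeeping of signs and factors of $2^{1/3}$ in this conjugation; the definitions \eqref{Udef}, \eqref{Vdef} are precisely chosen to absorb them.

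Third, for part (b), observe that left-multiplication by the constant-in-$\zeta$ matrix $G(s) = \binom{1\ 0}{\eta(s)\ 1}$ commutes with $\partial_\zeta$, so
\[
    \partial_\zeta (G \Psi_{\alpha}) = G (\partial_\zeta \Psi_{\alpha}) = G A G^{-1} (G \Psi_{\alpha}),
\]
which gives \eqref{DEPsi2} immediately. This completes the plan; the only non-routine ingredient is the conjugation computation in the second step, and the key simplification there is the recognition of $U$ and $V$ in the resulting entries.
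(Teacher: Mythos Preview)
Your proposal is correct and follows precisely the approach the paper takes: the paper's own proof consists of the single sentence ``This follows by straightforward calculations from \eqref{FlaschkaNewellSystem1}, \eqref{FlaschkaNewellSystem2}, and \eqref{PsiFNtoPsialpha},'' and you have simply unpacked what those straightforward calculations are. Your outline of the chain-rule / conjugation computation is the right one, and the observation that part (b) follows immediately by conjugating with the $\zeta$-independent factor $\left(\begin{smallmatrix} 1 & 0 \\ \eta(s) & 1 \end{smallmatrix}\right)$ is exactly how it is meant to go.
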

\begin{proof}
This follows by straightforward calculations from \eqref{FlaschkaNewellSystem1},
\eqref{FlaschkaNewellSystem2}, and \eqref{PsiFNtoPsialpha}.
\end{proof}
The Lax pair \eqref{DEPsiA}--\eqref{DEPsiB}, after the replacement
$\zeta \mapsto \zeta - s$, becomes the Lax pair from \cite{BBIK,KH}.
Equations \eqref{Uequation}--\eqref{Vequation} can also be derived
directly from the compatibility conditions of the Lax pair \eqref{DEPsiA}--\eqref{DEPsiB}
in a usual way.

It is a fact \cite{kapaev:P2}, that the solution $q$ of the Painlev\'e II equation
(with parameter $\nu = 2\alpha + 1/2$ and monodromy data \eqref{Stokesmultipliers}
or \eqref{monodromydata}) has an infinite number of poles on the positive real line,
see also \eqref{P2atplusinfinity} below.
If $-2^{1/3}s$ is such a pole then $\Psi_{2\alpha+1/2}^{FN}(\cdot, -2^{1/3}s)$
does not exist. So to be precise, if we assume that $\eta$ is analytic on $\mathbb R$,
then \eqref{PsiFNtoPsialpha} does not define $\Psi_{\alpha}$ for values of $s \in \mathbb R$
which belong to the discrete set of values $s$ where $q(-2^{1/3}s)$ has poles.

The relation \eqref{PsiFNtoPsialpha} defines $\Psi_{\alpha}$ for all $s \in \mathbb R$
only if we are able to choose $\eta$ so that all the poles on the real line
of the right-hand side of \eqref{PsiFNtoPsialpha} cancel out. Such a choice of $\eta$
would require $\eta$ itself to have poles at the poles
of $q(-2^{1/3}s)$.

We will describe two special choices for $\eta$. The first choice
is such that \eqref{PsiFNtoPsialpha} is equal to the special solution
$\Psi_{\alpha}^{(spec)}$, which is characterized by the asymptotic
condition \eqref{Psispecial}. From Proposition \ref{ExistenceProp} we
know that $\Psi_{\alpha}^{(spec)}$ exists for all $s \in \mathbb R$, so that
we can already conclude that the special choice $\eta = \eta^{(spec)}$ will have
poles at the poles of $q(-2^{1/3}s)$, and that
the real poles of the right-hand side of \eqref{PsiFNtoPsialpha} will indeed cancel out.

The second choice of $\eta$ is made so that the differential
equation \eqref{DEPsi2} takes a nice form. It will lead to
the differential equation \eqref{psisystem} for $\psi_1$
and $\psi_2$. This $\eta$ is denoted $\eta_0$, and
it is defined by the simple formula
\begin{equation} \label{eta0def}
    \eta_0(s) = i 2^{1/3} q(-2^{1/3} s),
\end{equation}
from which it is already clear that it has poles at
the poles of $q(-2^{1/3}s)$. For the choice \eqref{eta0def}  we can already
check that the differential equation \eqref{DEPsi2} leads to
\begin{equation} \label{DEPsi3}
    \frac{\partial}{\partial \zeta} \Psi_{\alpha}(\zeta;s)
    = A_0 \Psi_{\alpha}(\zeta;s) \end{equation}
where
\begin{alignat}{2} \nonumber
    A_0 & =
\begin{pmatrix} 1 & 0 \\ \eta_0 & 1 \end{pmatrix} A
\begin{pmatrix} 1 & 0 \\ -\eta_0 & 1 \end{pmatrix} \\[10pt]
    & = \begin{pmatrix} (\alpha + i u \eta_0)/\zeta & i - iu/\zeta
          \\ - i \zeta + i (v + \eta_0^2) + \eta_0(2 \alpha + i u \eta_0)/\zeta
        & -(\alpha + i u \eta_0)/\zeta \end{pmatrix},
        \label{A0def}
\end{alignat}
and
\begin{align} \label{udef}
     u(s) & = 2^{-1/3} U(-2^{1/3} s), \\
     v(s) & = 2^{-1/3} V(-2^{1/3} s). \label{vdef}
    \end{align}

\subsection{Special choice $\eta^{(spec)}$}

\begin{lemma}
Let $H$ be the Hamiltonian for Painlev\'e II
as in \eqref{Hdef}, with parameter $\nu = 2 \alpha + 1/2$,
and let
\begin{equation} \label{etaspecdef}
    \eta^{(spec)}(s) = i2^{-2/3} \left(q(-2^{1/3} s) + H(-2^{1/3} s)\right).
\end{equation}
Then the choice $\eta = \eta^{(spec)}$ in \eqref{PsiFNtoPsialpha}
leads to the special solution $\Psi_{\alpha}^{(spec)}$ of
the model RH problem characterized by \eqref{Psispecial}.
\end{lemma}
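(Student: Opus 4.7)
The plan is to substitute the large-$w$ expansion \eqref{Psinuasymp} of $\Psi_{2\alpha+1/2}^{FN}(\,\cdot\,;-2^{1/3}s)$ into the formula \eqref{PsiFNtoPsialpha} for $\Psi_{\alpha}(\zeta;s)$ and to identify the unique value of $\eta(s)$ that makes the resulting large-$\zeta$ behaviour match the stronger asymptotic \eqref{Psispecial} characterising $\Psi_{\alpha}^{(spec)}$. Since by Proposition \ref{ExistenceProp} the RH problem with this sharpened asymptotic has at most one solution, the matching will identify the two.

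First I would verify that the exponential piece works out automatically. Under the substitution $w = e^{\pi i/2}\,2^{-1/3}\zeta^{1/2}$ one has $w^{3} = -i\,2^{-1}\zeta^{3/2}$, hence
\begin{equation*}
-i\!\left(\tfrac{4}{3}w^{3} - 2^{1/3}sw\right) = -\tfrac{2}{3}\zeta^{3/2} - s\zeta^{1/2},
\end{equation*}
so the exponential factor in \eqref{Psinuasymp} already reproduces the one in \eqref{Psispecial}. Writing $H = H(-2^{1/3}s)$ and $q = q(-2^{1/3}s)$, the subleading term $\frac{1}{2iw}\!\left(\begin{smallmatrix} H & q \\ -q & -H \end{smallmatrix}\right)$ of \eqref{Psinuasymp} becomes $-2^{-2/3}\zeta^{-1/2}\!\left(\begin{smallmatrix} H & q \\ -q & -H \end{smallmatrix}\right)$, since $1/(2iw) = -2^{-2/3}\zeta^{-1/2}$. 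Conjugation by $e^{\pi i\sigma_3/4}$ (from \eqref{PsiFNtoPsialpha}) then converts this residue matrix into $\left(\begin{smallmatrix} H & iq \\ iq & -H \end{smallmatrix}\right)$.

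Next I would evaluate the Hankel-type rotation $\frac{1}{\sqrt 2}\!\left(\begin{smallmatrix} 1 & i \\ i & 1 \end{smallmatrix}\right)$ and its inverse acting on this matrix; a short calculation yields
\begin{equation*}
\frac{1}{\sqrt 2}\begin{pmatrix}1 & i \\ i & 1\end{pmatrix}\!\begin{pmatrix} H & iq \\ iq & -H \end{pmatrix}\!\frac{1}{\sqrt 2}\begin{pmatrix}1 & -i \\ -i & 1\end{pmatrix} = \begin{pmatrix} 0 & -i(H - q) \\ i(H + q) & 0 \end{pmatrix}.
\end{equation*}
A further similarity by $\zeta^{-\sigma_3/4}$ (on the left) and $\zeta^{\sigma_3/4}$ (on the right) multiplies the $(1,2)$-entry by $\zeta^{-1/2}$ and the $(2,1)$-entry by $\zeta^{1/2}$, and after absorbing the overall $2^{-2/3}\zeta^{-1/2}$ from $1/(2iw)$, the full correction to the desired prefactor in \eqref{Psispecial} equals $\left(\begin{smallmatrix} 0 & i\,2^{-2/3}(H-q)\zeta^{-1} \\ -i\,2^{-2/3}(H+q) & 0 \end{smallmatrix}\right)$. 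The decisive observation is that the $(2,1)$-entry is a non-decaying constant in $\zeta$, while every other entry is already $O(1/\zeta)$.

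Finally, left multiplication by $\left(\begin{smallmatrix} 1 & 0 \\ \eta & 1 \end{smallmatrix}\right)$ shifts this $(2,1)$-entry by $\eta$. The requirement that the entire prefactor be $I + O(1/\zeta)$, as demanded by \eqref{Psispecial}, therefore collapses to the single scalar identity $\eta - i\,2^{-2/3}(H + q) = 0$, which is precisely \eqref{etaspecdef}. The point requiring care in the argument is not any one computation but the bookkeeping in the long matrix product; the saving grace is that the Hankel rotation pushes the diagonal part $\pm H$ of the Painlev\'e~II residue matrix into the off-diagonal positions, where it combines with $q$ to form the specific sum $H + q$ appearing in \eqref{etaspecdef}. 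By Proposition \ref{ExistenceProp} the two solutions then coincide for every $s \in \mathbb R$; the apparent singularities of $\eta^{(spec)}$ and of $\Psi_{2\alpha+1/2}^{FN}(\,\cdot\,;-2^{1/3}s)$ at the real poles of $q$ must cancel in the product \eqref{PsiFNtoPsialpha}, since $\Psi_{\alpha}^{(spec)}$ itself is pole-free on $\mathbb R$.
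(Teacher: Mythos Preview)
Your proof is correct and follows essentially the same route as the paper: both substitute the expansion \eqref{Psinuasymp} into \eqref{PsiFNtoPsialpha}, carry out the conjugations, and observe that the only obstruction to the stronger asymptotic \eqref{Psispecial} is the constant $(2,1)$-entry $-i\,2^{-2/3}(H+q)$, which is killed precisely by $\eta=\eta^{(spec)}$. You have simply made explicit the ``straightforward computation'' that the paper compresses into the single display \eqref{theorem3formula}, and your appeal to Proposition~\ref{ExistenceProp} (together with Proposition~\ref{nonunique}(c)) correctly closes the argument by guaranteeing that the higher-order $O(1/w^{2})$ contributions must automatically fall into the required $O(1/\zeta)$.
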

\begin{proof}
It follows from \eqref{Psinuasymp} and \eqref{PsiFNtoPsialpha}
by straightforward computation, that
\begin{alignat}{2}
\nonumber
\Psi_{\alpha}(\zeta;s) &= \begin{pmatrix} 1 & 0 \\ \eta & 1 \end{pmatrix}
    \zeta^{-\sigma_3/4} \frac{1}{\sqrt{2}}
    \begin{pmatrix} 1 & i \\ i & 1 \end{pmatrix} e^{\pi i \sigma_3/4}
    \Psi_{2\alpha+1/2}^{FN}(w;-2^{1/3}s) e^{-\pi i \sigma_3/4} \\
    \nonumber
    & = \begin{pmatrix} 1 & 0 \\ \eta & 1 \end{pmatrix}
    \bigg[ \begin{pmatrix} 1 & 0 \\ -\eta^{(spec)} & 1 \end{pmatrix}
    + \frac{1}{\zeta} \begin{pmatrix} 0 & i2^{-2/3}(H-q)(-2^{1/3}s) \\ 0 & 0 \end{pmatrix}\\
    \nonumber
    & \qquad \qquad + \zeta^{-\sigma_3/4} \frac{1}{\sqrt{2}} \begin{pmatrix} 1 & i \\ i & 1 \end{pmatrix} e^{\pi i \sigma_3/4}
    O(1/\zeta) e^{-\pi i \sigma_3/4} \frac{1}{\sqrt{2}} \begin{pmatrix} 1 & -i \\ -i & 1 \end{pmatrix}
    \zeta^{\sigma_3/4}\bigg] \\
     \label{theorem3formula}
    & \qquad \qquad \qquad \times \zeta^{-\sigma_3/4} \frac{1}{\sqrt{2}} \begin{pmatrix} 1 & i \\ i & 1 \end{pmatrix}
    e^{-(\frac{2}{3}\zeta^{3/2} + s\zeta^{1/2})\sigma_3}
\end{alignat}
as $\zeta \to \infty$.
From \eqref{theorem3formula} it is clear that we need
to take $\eta = \eta^{(spec)}$ in order to be able to
obtain \eqref{Psispecial}. Thus the lemma follows.
\end{proof}

From the calculation \eqref{theorem3formula}
we also note that for any solution $\Psi_{\alpha}$ of
the model RH problem we have
\begin{equation} \label{theorem3formula2}
    \left(\Psi_{\alpha}(\zeta;s) e^{\frac{2}{3}(\zeta^{3/2} + s \zeta^{1/2})\sigma_3}
        \frac{1}{\sqrt{2}} \begin{pmatrix} 1 & - i \\ - i & 1 \end{pmatrix}
        \zeta^{\sigma_3/4} \right)_{12} =
  \frac{i2^{-2/3} (H-q)(-2^{1/3}s)}{\zeta} + O(\zeta^{-3/2})
\end{equation}
as $\zeta \to \infty$. This property  will be used later in the proof
of Theorem \ref{theorem3}.

Since the left-hand side of \eqref{theorem3formula2}
is analytic in $s$ for $s \in \mathbb R$, it also follows
from \eqref{theorem3formula2} that $H-q$ does not have poles on the real line.
This and similar properties are collected in the following lemma.
Recall that $U$ is given by \eqref{Udef}.
\begin{lemma} \label{lem:analytic}
The following hold.
\begin{enumerate}
\item[\rm (a)] $H-q$ has no poles on the real line.
\item[\rm (b)] $U$ has no poles on the real line.
\item[\rm (c)] $U$ has a zero at each of the real poles of $q$
and $Uq$ has no poles on the real line.
\item[\rm (d)] $Uq$ takes the value $\nu-1/2$ at each of
the real poles of $q$.
\end{enumerate}
\end{lemma}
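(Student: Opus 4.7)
\textbf{Part (a)} is what the paragraph immediately preceding the lemma already observes: Proposition \ref{ExistenceProp} gives that $\Psi_\alpha^{(spec)}$ exists and depends analytically on $s \in \mathbb R$, so by \eqref{theorem3formula2} the coefficient $i 2^{-2/3}(H-q)(-2^{1/3}s)$ of $1/\zeta$ is analytic in $s$, which means $H - q$ has no real poles. For parts (b)--(d) my plan is to examine the Laurent expansion of $q$ at a real pole $s_0$, express $U$ and $V$ in terms of it, and then use an algebraic identity together with (a) to eliminate one of the two possible leading cases.

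A standard dominant-balance argument in $q'' = sq + 2q^3 - \nu$ shows that any pole of $q$ is simple with residue $a = \pm 1$. Carrying the recursion one step further with $q(s) = a/x + b + cx + dx^2 + O(x^3)$, $x = s - s_0$, forces $b = 0$, $c = -as_0/6$, $d = (\nu - a)/4$. From here $U = q^2 + q' + s/2$ and $V = q^2 - q' + s/2$ expand, in the case $a = +1$, to
\[
U = (\nu - \tfrac12) x + O(x^2), \qquad V = \tfrac{2}{x^2} + \tfrac{s_0}{3} + O(x),
\]
and, in the case $a = -1$, to
\[
U = \tfrac{2}{x^2} + \tfrac{s_0}{3} + O(x), \qquad V = -(\nu + \tfrac12) x + O(x^2),
\]
so the sign of the residue of $q$ determines which of $U,V$ vanishes at $s_0$ and which acquires a double pole.

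The decisive ingredient is the algebraic identity $UV = (q^2 + s/2)^2 - (q')^2$, which together with the definition \eqref{Hdef} of $H$ yields $H = -UV + 2\nu q + s^2/4$, equivalently
\[
H - q = -UV + (2\nu - 1)\, q + \tfrac{s^2}{4}.
\]
Inserting the two Laurent expansions above, the case $a = +1$ produces an exact cancellation of residues and $H-q$ is regular at $s_0$ (consistent with (a)), while the case $a = -1$ produces a residue equal to $(2\nu + 1) - (2\nu - 1) = 2$, so $H - q$ would have a simple pole there. Since part (a) excludes the latter, every real pole of $q$ must have residue $+1$; at such a pole $U$ has a simple zero, so $U$ is regular on $\mathbb R$, proving (b). The product $Uq$ is therefore regular on $\mathbb R$ and equals $(\nu - \tfrac12) + O(x)$ at every real pole of $q$, which gives (c) and (d).

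Given Proposition \ref{ExistenceProp} and the algebraic identity above, there is no conceptual obstacle; the only work is careful Laurent bookkeeping at $s_0$. The structural point worth highlighting is that the residue of $H - q$ at a pole of $q$ comes out independent of $s_0$ (it is $0$ for residue $+1$ of $q$ and $2$ for residue $-1$), which is exactly what allows part (a) to select a single sign uniformly in $s_0$.
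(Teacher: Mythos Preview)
Your argument is correct, and it takes a genuinely different route from the paper's. The paper proves (b) immediately from (a) via the differential identity $U = -(H-q)' + s/2$ (since $H' = -q^2$), then gets (c) by differentiating $U$ and using the Painlev\'e II equation to obtain $U' = 2Uq - \nu + \tfrac12$, so that $Uq = \tfrac12(U' + \nu - \tfrac12)$ is analytic; finally (d) comes from the manipulation $(Uq - \nu + \tfrac12)q = (Uq)' - U(U - s/2)$, whose right-hand side is analytic. No Laurent expansions or case analysis on the residue sign are needed, and the corollary that all real poles of $q$ have residue $+1$ is deduced \emph{afterwards} from (b).

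Your approach instead goes through the explicit local picture at a pole and the algebraic identity $H - q = -UV + (2\nu-1)q + s^2/4$, using (a) to select the residue sign first and then reading off (b)--(d) from the resulting Laurent expansion of $U$. This is more computational but has the virtue of making the mechanism completely transparent: one sees exactly why the residue~$-1$ case is ruled out (it produces a residue~$2$ in $H-q$, independent of $s_0$), and the value $\nu - \tfrac12$ in (d) drops out directly as the linear coefficient of $U$. The paper's route is slicker and avoids any local expansion, but your route simultaneously proves the corollary about the residue sign and arguably gives more insight into the interplay between $U$, $V$, and $H$.
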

\begin{proof}
(a) We noted already that part (a) follows from \eqref{theorem3formula2}.

(b) Since $H' = -q^2$, we have that
\begin{equation}
\label{poleidentity0}
    U(s) = q^2(s) + q'(s) + s/2 = -(H-q)'(s) + s/2,
\end{equation}
and so it follows from part (a) that $U$ has no poles on the real line either.

(c) Differentiating \eqref{Udef}, we obtain
\begin{equation} \label{poleidentity1}
U'  = 2qq' + q'' + \frac{1}{2}
    = 2q q' + sq + 2q^3 - \nu + \frac{1}{2}
    = 2 U q - \nu + \frac{1}{2}.
    \end{equation}
Thus also $Uq$ has no poles on the real line, which means that
$U$ has a zero at each of the real poles of $q$.

(d) Using \eqref{poleidentity1}, we get
\begin{equation} \label{poleidentity2}
    (Uq - \nu + \frac{1}{2})q = (U' - Uq)q =
    (Uq)' - U(q^2+q') =
    (Uq)' -  U(U - s/2). \end{equation}
Since the right-hand side of \eqref{poleidentity2} is analytic
on the real line by parts (b) and (c), we conclude that $Uq - \nu + \frac{1}{2}$ has
a zero at each of the real poles of $q$. This proves part (d).
\end{proof}

It is well-known and easy to check that
each pole of $q$ is simple and has residue $+1$ or $-1$. Indeed, the Laurent series for
$q$ at a pole $s_0$ has the form
\[ q(s) = \frac{q_{-1}}{s-s_0} + q_1(s-s_0) + \cdots, \]
where $q_{-1} \in \{-1,1\}$. Using this, one easily verifies
that either $q^2 + q'$ or $q^2-q$ is analytic
at $s_0$ (depending on the sign of the residue $q_{-1}$). Our
result that $U = q^2 + q' + s/2$ is analytic on $\mathbb R$
can then also be stated as follows.

\begin{corollary}
The solution $q$ of the Painlev\'e II equation
with parameter $\nu = 2\alpha +1/2$ and monodromy
data \eqref{Stokesmultipliers} or \eqref{monodromydata} has
only simple poles on the real line, with residue $+1$.
\end{corollary}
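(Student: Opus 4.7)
The corollary follows readily by combining the two observations mentioned in the paragraph preceding it with part (b) of Lemma \ref{lem:analytic}. The plan is to recall the standard Laurent expansion of a Painlev\'e II solution at one of its poles, substitute into $U = q^2 + q' + s/2$, and read off which residue is compatible with $U$ being analytic on $\mathbb{R}$.

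First I would recall (or quickly rederive) the classical fact that every pole $s_0$ of any solution of $q'' = sq + 2q^3 - \nu$ is simple with residue $q_{-1} \in \{-1,+1\}$. This is obtained by plugging a formal Laurent series $q(s) = q_{-1}(s-s_0)^{-1} + a_0 + a_1(s-s_0) + \cdots$ into the equation, equating the most singular terms, and finding $2q_{-1}^3 = q_{-1}$, hence $q_{-1}^2 = 1$. (This is a textbook calculation for $P_{II}$ and I would simply cite it.)

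Next, a direct computation of the Laurent expansion gives
\begin{equation*}
    q^2(s) = \frac{q_{-1}^2}{(s-s_0)^2} + O(1) = \frac{1}{(s-s_0)^2} + O(1),
    \qquad
    q'(s) = -\frac{q_{-1}}{(s-s_0)^2} + O(1),
\end{equation*}
so that
\begin{equation*}
    U(s) = q^2(s) + q'(s) + \tfrac{s}{2} = \frac{1-q_{-1}}{(s-s_0)^2} + O(1) \qquad \text{as } s \to s_0.
\end{equation*}
In particular, if $q_{-1} = -1$ then $U$ has a double pole at $s_0$, which contradicts part (b) of Lemma \ref{lem:analytic}. Hence the only possibility for a real pole is $q_{-1} = +1$.

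The main (and only) point requiring care is the verification that the series expansion in fact forces $q_{-1}^2 = 1$ and no higher-order poles can occur, but this is a standard Painlev\'e calculation rather than anything specific to the monodromy data at hand; everything else is an immediate consequence of Lemma \ref{lem:analytic}(b). The whole argument amounts to two lines once the Laurent expansion is in hand.
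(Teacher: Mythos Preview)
Your argument is correct and is essentially identical to the paper's: both invoke the standard Laurent expansion of a Painlev\'e II solution at a pole (simple, residue $\pm 1$, with vanishing constant term), compute the singular part of $q^2+q'$, and use Lemma~\ref{lem:analytic}(b) to exclude residue $-1$. Two cosmetic slips worth fixing: the dominant-balance relation is $2q_{-1}=2q_{-1}^3$ (not $2q_{-1}^3=q_{-1}$), and your ``$q^2=\frac{1}{(s-s_0)^2}+O(1)$'' tacitly uses that the constant term $a_0$ in the Laurent series vanishes---this follows from matching the $(s-s_0)^{-2}$ terms in the equation, and in any case your contradiction for $q_{-1}=-1$ only needs the leading $(s-s_0)^{-2}$ coefficient, which is unaffected.
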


\subsection{Special choice $\eta_0$}

As already announced we will also use the special
choice $\eta = \eta_0$ given by \eqref{eta0def}.

By \eqref{eta0def} and \eqref{etaspecdef} we have that
\[ \eta_0(s) - \eta^{(spec)}(s)
    = i2^{-2/3} \left(q(-2^{1/3} s) - H(-2^{1/3} s)\right),
\]
and so it follows from part (a) of Lemma \ref{lem:analytic}
that $\eta_0 - \eta^{(spec)}$ is analytic on the real line.
Since $\Psi_{\alpha}^{(spec)}$ exists for all $s \in \mathbb R$,
it follows that the solution of the model RH problem associated with $\eta_0$
exists for all $s \in \mathbb R$ as well, and it is analytic
in $s$.

The differential equation for $\Psi_{\alpha}$ with $\eta = \eta_0$
is given by \eqref{DEPsi3} with $A_0$ as in \eqref{A0def}.
It then follows that $A_0$ is analytic on the real line,
and we will explicitly verify this by rewriting its entries in terms of the
function $u$ from \eqref{udef}
\[ u(s) = 2^{-1/3} U(-2^{1/3}s). \]
The analyticity of $u$ is immediate from \eqref{udef}
and part (b) of Lemma \ref{lem:analytic}. The  analyticity
of $u \eta_0$ follows from \eqref{udef}, \eqref{eta0def}
and part (c) of Lemma \ref{lem:analytic}.
Using also \eqref{poleidentity1} we get
\begin{equation} \label{uprimeequation}
    u' = 2iu\eta_0 + \nu - 1/2 = 2 iu \eta_0 + 2\alpha.
    \end{equation}

Next, it follows from \eqref{Udef}, \eqref{Vdef}, \eqref{udef}, \eqref{vdef},
and \eqref{eta0def} that
\begin{equation} \label{vequation}
    v(s) + \eta_0(s)^2 = - u(s) - s.
    \end{equation}
We can use \eqref{uprimeequation} and \eqref{vequation} to
eliminate $\eta_0$ and $v$ from the entries in $A_0$,
and we get from \eqref{A0def} that
\begin{equation} \label{A0def2}
    A_0 = \begin{pmatrix} u'/(2\zeta) & i - iu/\zeta
          \\ - i \zeta - i (u +s) - i ((u')^2 - (2\alpha)^2)/(4u \zeta)
        & -u'/(2\zeta) \end{pmatrix}.
\end{equation}

\subsection{Proof of Theorem \ref{theorem3} and \ref{theorem4}}

After these preparations the proofs of Theorems \ref{theorem3}
and \ref{theorem4} are short.

\begin{varproof} \textbf{of Theorem \ref{theorem3}.}
From \eqref{Uequation} and \eqref{udef} it follows that $u$
satisfies the Painlev\'e XXXIV equation
in the form \eqref{painleve34}.

From \eqref{theorem3formula2} it follows that
\[ \lim_{\zeta \to \infty}
    \left[ \zeta
    \left(\Psi_{\alpha}(\zeta;s) e^{\frac{2}{3}(\zeta^{3/2} + s \zeta^{1/2})\sigma_3}
        \frac{1}{\sqrt{2}} \begin{pmatrix} 1 & - i \\ - i & 1 \end{pmatrix}
        \zeta^{\sigma_3/4} \right)_{12} \right]
        = i 2^{-2/3} (H-q)(-2^{1/3}s)
        \]
which in view of \eqref{poleidentity0} and \eqref{udef}
leads to \eqref{usolution}.
This proves Theorem \ref{theorem3}.
\end{varproof}

\begin{varproof} \textbf{of Theorem \ref{theorem4}.}
Let $\Psi_{\alpha}$ be the solution of the model RH problem
given by \eqref{PsiFNtoPsialpha} with $\eta = \eta_0$ as in \eqref{eta0def}.
Then
\begin{equation} \label{DEPsi4}
    \frac{\partial}{\partial \zeta} \Psi_{\alpha}(\zeta;s)
    = A_0 \Psi_{\alpha}(\zeta;s), \end{equation}
with $A_0$ given by \eqref{A0def2}.
The differential equation \eqref{DEPsi4} is valid for
$\zeta \in \mathbb C \setminus \Sigma$.
We can take the limit $\zeta \to x$ with $x \in \mathbb R \setminus \{0\}$
to obtain a differential equation for $\Psi_{\alpha,+}(x;s)$,
with the same matrix $A_0$ (but with $\zeta$ replaced by $x$).
Using \eqref{psi12def}, we obtain the differential equation
\eqref{psisystem} for $\psi_1$ and $\psi_2$.
This completes the proof of Theorem \ref{theorem4}.
\end{varproof}

\section{Concluding remarks}
\label{section4}

\subsection{The case $\alpha = 0$} \label{conclusion1}
The case $\alpha = 0$ is classical and well understood. We know that
$K_0^{edge}(x,y;s)$ is the (shifted) Airy kernel, see \eqref{K0}.
We will show here how this follows from the calculations from
the previous section.

In the special case $\alpha = 0$, we have $\nu = 1/2$, and then
the Painlev\'e II equation has special solutions built
out of Airy functions. To be precise if $\Ai$ and $\Bi$
are the standard Airy functions, then for any
$C_1$ and $C_2$, not both zero, we have that
\begin{equation} \label{qnuishalf}
    q(s) = \frac{d}{ds} \log \left(C_1 \Ai(-2^{-1/3}s) + C_2 \Bi(-2^{-1/3}s) \right)
\end{equation}
is a solution of $q'' = sq + 2q^3 - \frac{1}{2}$.
These are exactly the solutions that correspond to the special
Stokes multipliers $a_1 = a_2 = a_3 = -i$. The corresponding solutions
to the RH problem were given by Flaschka and Newell \cite[Section 3F(iv)]{FN}.
For example, for $w$ in sector $S_1$ we have (see also \cite[Chapter 11]{FIKN})
\begin{equation} \label{psiFNnuishalf}
    \Psi_{1/2}^{FN}(w;s) = \frac{\alpha_0}{2} w^{1/2}
    \begin{pmatrix}
        1 - iq(s)/w & - 2^{-1/3} i/w \\
        1 + iq(s)/w & 2^{-1/3} i/w
        \end{pmatrix}
        \begin{pmatrix} \Ai(z) & \Bi(z) \\
        \Ai'(z) & \Bi'(z) \end{pmatrix}
        \begin{pmatrix} -i & 1 \\ 1 & - i
        \end{pmatrix} \end{equation}
with $z = -2^{2/3} w^2 - 2^{-1/3} s$ and $\alpha_0 = 2^{1/6} \sqrt{\pi} e^{i\pi/4}$.
The expressions for $\Psi_{\nu}^{FN}(w;s)$ in the other sectors follow by multiplying
\eqref{psiFNnuishalf} by the appropriate jump matrices.

It follows from \eqref{qnuishalf} and \eqref{psiFNnuishalf} that
the extra parameter $c$ in the monodromy data for \eqref{qnuishalf} is
\begin{equation} \label{cnuishalf}
    c = \frac{iC_1 - C_2}{C_1 - i C_2}.
    \end{equation}
So if we take $c=i$ as in \eqref{monodromydata} then $C_2 =0$
and the corresponding solution \eqref{qnuishalf} is
\begin{equation} \label{qnuishalf2}
    q(s) = \frac{d}{ds} \log \Ai(-2^{-1/3} s) = -2^{-1/3} \frac{\Ai'(-2^{-1/3}s)}{\Ai(-2^{-1/3}s)}.
\end{equation}
Note that the solution \eqref{qnuishalf2} is special among all solutions \eqref{qnuishalf}
in its behavior for $s \to -\infty$. Indeed, from the asymptotic behavior for the
Airy functions it follows that for \eqref{qnuishalf2} we have
\[ q(s) \sim \frac{1}{2} \sqrt{2} (-s)^{1/2} \qquad \mbox{ as } s \to - \infty, \]
while for the other solutions \eqref{qnuishalf} we have
\[ q(s) \sim - \frac{1}{2} \sqrt{2} (-s)^{1/2} \qquad \mbox{ as } s \to -\infty. \]

So according to Proposition \ref{Prop:PsiFNtoPsialpha} we should
be using $q$ given by \eqref{qnuishalf2} and then define $\Psi_0$
as in \eqref{PsiFNtoPsialpha}. If $\zeta$ is in sector
$\Omega_3$, then $w = e^{i\pi/2} 2^{-1/3} \zeta^{1/2}$ is in
sector $S_1$, so that by  \eqref{psiFNnuishalf}
\[ \Psi_{1/2}^{FN}(w; - 2^{1/3} s)
    = \frac{\sqrt{\pi} i}{2} \zeta^{1/4}
        \begin{pmatrix} 1 + i\eta_0(s)\zeta^{-1/2} & - \zeta^{-1/2}  \\
            1 - i \eta_0(s) \zeta^{-1/2} & \zeta^{-1/2} \end{pmatrix}
        \begin{pmatrix} \Ai(\zeta+s) & \Bi(\zeta+s) \\
      \Ai'(\zeta+s) & \Bi'(\zeta+s) \end{pmatrix}
      \begin{pmatrix} - i & 1 \\ 1 & -i \end{pmatrix}
\]
where $\eta_0(s) = i 2^{1/3} q(-2^{1/3})$ as in \eqref{eta0def}.
Then \eqref{PsiFNtoPsialpha} with $\eta = \eta_0$ yields for $\zeta \in \Omega_3$,
\begin{align*} \Psi_0(\zeta;s) & =
    \frac{\sqrt{\pi}}{2\sqrt{2}}
        \begin{pmatrix} 1 & 0 \\ \eta_0(s) & 1 \end{pmatrix} \begin{pmatrix} 1 & 0 \\ 0 & \zeta^{1/2} \end{pmatrix}
        \begin{pmatrix} 1 & i \\ i & 1 \end{pmatrix}
        e^{\pi i\sigma_3/4} \\
        & \qquad \times
        \begin{pmatrix} 1 + i\eta_0(s)\zeta^{-1/2} & - \zeta^{-1/2}  \\
            1 - i \eta_0(s) \zeta^{-1/2} & \zeta^{-1/2} \end{pmatrix}
        \begin{pmatrix} \Ai(\zeta+s) & \Bi(\zeta+s) \\
      \Ai'(\zeta+s) & \Bi'(\zeta+s) \end{pmatrix}
      \begin{pmatrix} 1 & i \\ i & 1 \end{pmatrix}
      e^{-\pi i\sigma_3/4} \\
     & = \frac{\sqrt{\pi}}{\sqrt{2}}
        e^{\pi i\sigma_3/4}
        \begin{pmatrix} \Ai(\zeta+s) & \Bi(\zeta+s) \\
      \Ai'(\zeta+s) & \Bi'(\zeta+s) \end{pmatrix}
      \begin{pmatrix} 1 & i \\ i & 1 \end{pmatrix}
      e^{-\pi i\sigma_3/4} \\
    & =
    \frac{\sqrt{\pi}}{\sqrt{2}}
        \begin{pmatrix} \Ai(\zeta+s) + i \Bi(\zeta+s) & - (\Ai(\zeta+s) - i \Bi(\zeta+s)) \\
        -i (\Ai'(\zeta+s) + i \Bi'(\zeta+s)) & i (\Ai'(\zeta+s) - i \Bi'(\zeta+s))
        \end{pmatrix}.
\end{align*}
Since (see e.g.\ formula (10.4.9) in \cite{AS})
\[ \Ai(z) \pm i \Bi(z) = 2 e^{\pm \pi i/3} \Ai(e^{\mp 2\pi i/3} z) \]
we can write $\Psi_0$ in the more familiar form
\begin{align} \label{Psi0inOmega3}
    \Psi_0(\zeta;s) & =
    \sqrt{2\pi}
        \begin{pmatrix} e^{\pi i/3} \Ai(e^{-2\pi i/3}(\zeta+s)) & - e^{-\pi i/3} \Ai(e^{2\pi i/3}(\zeta+s)) \\
        -i e^{-\pi i/3} \Ai'(e^{-2\pi i/3}(\zeta+s))  & i e^{\pi i/3} \Ai'(e^{2\pi i/3}(\zeta+s))
        \end{pmatrix},
            \quad \mbox{for } \zeta \in \Omega_3.
\end{align}
For $\zeta \in \Omega_1$ we find in a similar way
(or by multiplying \eqref{Psi0inOmega3} on the right by
$\left(\begin{smallmatrix} 1 & 1 \\ -1 & 0 \end{smallmatrix} \right)$), that
\begin{align} \label{Psi0inOmega1}
    \Psi_0(\zeta;s)  & =
    \sqrt{2\pi}
        \begin{pmatrix} \Ai(\zeta+s) &  e^{\pi i/3} \Ai(e^{-2\pi i/3}(\zeta+s)) \\
        -i \Ai'(\zeta+s)  & -i e^{-\pi i/3} \Ai'(e^{-2\pi i/3}(\zeta+s))
        \end{pmatrix},
        \qquad \mbox{for } \zeta \in \Omega_1.
\end{align}
Then it follows from \eqref{psi12def} and \eqref{Psi0inOmega1} that for $x > 0$,
\begin{equation} \label{psi12foralpha=0}
    \psi_1(x;s) = \sqrt{2\pi} \Ai(x+s), \qquad \psi_2(x;s) = - \sqrt{2\pi} i \Ai'(x+s),
    \end{equation}
and a similar calculation shows that \eqref{psi12foralpha=0} also
holds for $x < 0$. Therefore, by \eqref{Kintform}
\begin{align} \nonumber
    K_0^{edge}(x,y;s) & = \frac{\psi_2(x;s) \psi_1(y;s) - \psi_1(x;s) \psi_2(y;s)}{2\pi i(x-y)} \\
    & = \frac{\Ai(x+s) \Ai'(y+s) - \Ai'(x+s) \Ai(y+s)}{x-y},
    \label{K0edge}
    \end{align}
which is indeed the (shifted) Airy kernel.

\subsection{The case $\alpha = 1$} \label{conclusion2}

The case $\alpha = 1$ can be solved explicitly in terms of Airy functions as well.
Let $\Psi_0$ be a solution of the model RH problem with parameter $\alpha = 0$.
Then for any matrix $X = X(s)$, it is easy to check that
\begin{equation} \label{Psi1def}
 \Psi_1(\zeta;s) = (I - \frac{1}{\zeta}X(s)) \Psi_0(\zeta;s)
\end{equation}
satisfies the conditions (a), (b), and (c) of the model RH problem
for $\alpha = 1$. For a special choice of $X$ we will have that
the condition (d) is also satisfied.

Let's take $\Psi_0$ given by \eqref{Psi0inOmega1} for $\zeta \in \Omega_1$.
Then the condition (d) of the model RH problem yields the following
condition on $X$
\begin{equation} \label{Xcondition}
    (I - \frac{1}{\zeta} X(s)) \begin{pmatrix} \Ai(\zeta+s) \\ - i \Ai'(\zeta+s) \end{pmatrix}
    = O(\zeta), \qquad \mbox{ as } \zeta \to 0.
    \end{equation}
The condition \eqref{Xcondition} is satisfied if and only if we take
 \begin{equation} \label{Xdef} X(s) = \frac{1}{\Ai'(s)^2 - s\Ai(s)^2}
    \begin{pmatrix} \Ai(s) \\ -i \Ai'(s) \end{pmatrix}
    \begin{pmatrix} \Ai'(s) & -i \Ai(s) \end{pmatrix}. \end{equation}
Note that the denominator in \eqref{Xdef} cannot be zero for $s \in \mathbb R$.
Indeed, its derivative is $-\Ai(s)^2$, so that it is decreasing for $s\in \mathbb R$,
and since the limit for $s \to +\infty$ is equal to $0$, it follows
that $\Ai'(s) - s \Ai(s)^2 > 0$ for all $s \in \mathbb R$.
Note also that if we take the limit $x,y \to 0$ in \eqref{K0edge}, then
\begin{equation} \label{K0edgeat0}
    K_0^{edge}(0,0;s) = \Ai'(s)^2 - s \Ai(s)^2.
    \end{equation}

Using \eqref{psi12def}, \eqref{Psi0inOmega1}, \eqref{Psi1def}, \eqref{Xdef},
and \eqref{K0edgeat0}, we obtain that
\begin{align*}
    \psi_1(x;s) & =
    \sqrt{2\pi} \Ai(x+s)
    - \sqrt{2\pi}  \frac{\Ai(x+s)\Ai'(s) - \Ai(s) \Ai'(x+s)}{x (\Ai'(s)^2 - s\Ai(s)^2)}
    \Ai(s) \\
    & = \sqrt{2\pi} \left( \Ai(x+s) - \frac{K_0^{edge}(x,0;s)}{K_0^{edge}(0,0;s)} \Ai(s)\right), \\
 \psi_2(x;s) & =
    -\sqrt{2\pi}i \Ai'(x+s)
    + \sqrt{2\pi}i  \frac{\Ai(x+s)\Ai'(s) - \Ai(s) \Ai'(x+s)}{x (\Ai'(s)^2 - s\Ai(s)^2)}
    \Ai'(s) \\
    & = - \sqrt{2\pi}  i \left( \Ai'(x+s) - \frac{K_0^{edge}(x,0;s)}{K_0^{edge}(0,0;s)} \Ai'(s)\right).
        \end{align*}
Thus
\begin{align} \nonumber
    K_1^{edge}(x,y;s) & = \frac{\psi_2(x;s) \psi_1(y;s) - \psi_1(x;s) \psi_2(y;s)}{2\pi i(x-y)} \\
    & = K_0^{edge}(x,y;s) - \frac{K_0^{edge}(x,0;s) K_0^{edge}(y,0;s)}{K_0^{edge}(0,0;s)}.
    \label{K1edge}
    \end{align}

To compute the relevant solution $u$ of the Painlev\'e XXXIV equation for $\alpha = 1$,
we may assume that we have taken $\Psi_0^{spec}$ in \eqref{Psi1def}, and then
use \eqref{usolution}, \eqref{Psi1def}, \eqref{Psispecial}, and the fact that $u\equiv 0$
for $\alpha=0$, to obtain that $u(s) = i X_{12}'(s)$,
which by \eqref{Xdef} leads to
\begin{equation} \label{udefalpha=1}
    u(s) = \frac{d}{ds} \left( \frac{\Ai(s)^2}{\Ai'(s)^2-s\Ai(s)^2} \right)
        = - \frac{d^2}{ds^2} \log K_0^{edge}(0,0;s).
\end{equation}
Its graph is shown in Figure \ref{figure6}.

\begin{figure}[t]
\centerline{
\includegraphics[scale=0.5,angle=-90]{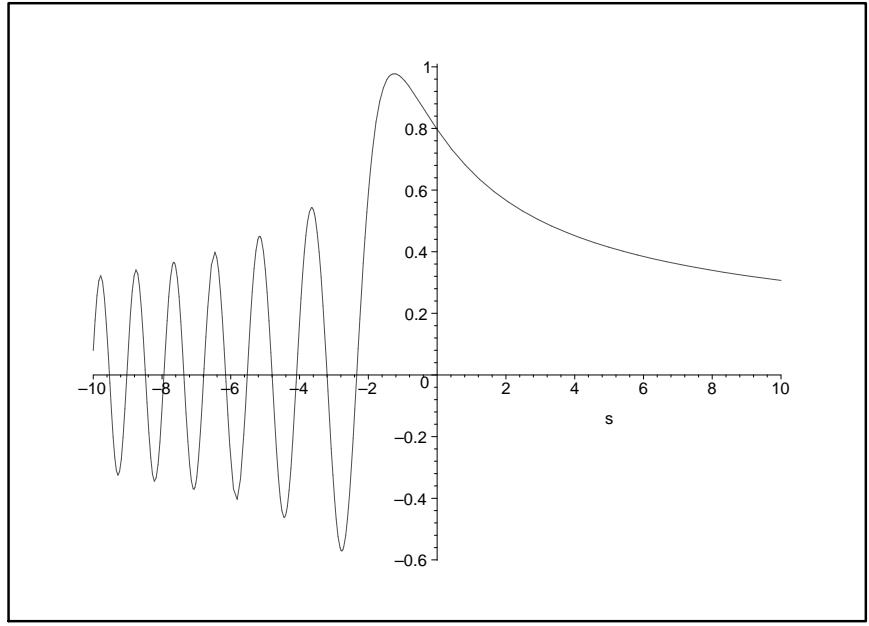}}
\caption{\label{figure6} The solution of the Painlev\'e XXXIV equation
for $\alpha = 1$.}
\end{figure}

One can verify from the explicitly known asymptotic formulas
for $\Ai$ that
\begin{equation} \label{uasympalpha=1}
    u(s) = \frac{1}{\sqrt{s}} - \frac{1}{s^2} + O(s^{-7/2})
        \qquad \mbox{ as } s \to +\infty.
        \end{equation}
On the negative real axis, $u$ has an infinite number of
zeros. These are the zeros of the Airy function $\Ai$,
and an infinite number of additional zeros that interlace
with the zeros of $\Ai$.

Equations \eqref{Psi1def} and \eqref{udefalpha=1} constitute the
Schlesinger and (induced by it) B\"acklund
transformations, respectively, for the case of Painlev\'e XXXIV and
applied to its zero vacuum solution
(for the general theory of Schlesinger transformations see \cite{JMU};
see also \cite[Chapter 6]{FIKN}).

\subsection{Asymptotic characterization of the Painlev\'e function $u(s)$}
\label{conclusion3}

We finally want to characterize the solution $u$ of the Painlev\'e XXXIV
equation by its asymptotic properties. Recall that $u$ is connected to
the solution of the Painlev\'e II equation $q'' = sq + 2q^3 - \nu$ with
$\nu = 2 \alpha + 1/2$ by the formulas
\begin{equation} \label{uqrelation}
    u(s) = 2^{-1/3} U(-2^{1/3} s), \qquad  U(s) = q^2(s) + q'(s) + \frac{s}{2}.
    \end{equation}

Assume that $\nu > - 1/2$.
It is shown in \cite{kapaev:HM} (see also \cite[Chapters 5, 11]{FIKN})
that the solution $q(s)$ of the Painlev\'e II equation corresponding to the Stokes multipliers
\eqref{Stokesmultipliers} exhibits the following asymptotic behavior in the sector
$\arg s \in \bigl[\frac{2\pi}{3}, \frac{4\pi}{3}\bigr]$
\begin{alignat}{2}
\nonumber
q(s) &= \sqrt{-\frac{s}{2}}\sum_{n=0}^{[\nu +1/2]}b_n(-s)^{-3n/2} + O\bigl(s^{-3[\nu + 1/2]/2 -1}\bigr)\\
\nonumber
& \qquad + c_{+}(-s)^{-\frac{3}{2}\nu-\frac{1}{4}} e^{-\frac{2\sqrt2}{3}(-s)^{3/2}} \bigl(1+{O}(s^{-1/4})\bigr),\\
\label{trong+4}
& \qquad \qquad \text{ as } s \to \infty, \quad
\arg s \in \bigl(\frac{2\pi}{3}, \frac{4\pi}{3}\bigr],
\quad \arg(-s) \in \bigl(-\frac{\pi}{3}, \frac{\pi}{3}\bigr],
\end{alignat}
or
\begin{alignat}{2}
\nonumber
q(s) &= \sqrt{-\frac{s}{2}}\sum_{n=0}^{[\nu +1/2]}b_n(-s)^{-3n/2} + O\bigl(s^{-3[\nu +1/2]/2}\bigr)\\
\nonumber
& \qquad + c_{-}(-s)^{-\frac{3}{2}\nu-\frac{1}{4}} e^{-\frac{2\sqrt2}{3}(-s)^{3/2}} \bigl(1+{O}(s^{-1/4})\bigr),\\
\label{trong+5}
& \qquad \qquad \text{ as } s \to \infty , \quad
\arg s \in \bigl[\frac{2\pi}{3}, \frac{4\pi}{3}\bigr),
\quad \arg(-s) \in \bigl[-\frac{\pi}{3}, \frac{\pi}{3}\bigr),
\end{alignat}
where we have used the notation $[r]$ for the integer part of the
positive number $r$, i.e.
\[
[r] \in {\mathbb N}_{0}, \qquad [r] \leq r < [r]+1.
\]
The coefficients $c_{+}$ and $c_{-}$  of the exponential terms, which
oscillate on the respective  boundaries of the sector
$\bigl(\frac{2\pi}{3}, \frac{4\pi}{3}\bigr)$, are given by the formulae
\begin{equation}\label{c0}
c_{+} = -\frac{e^{\pi(\nu+\frac{1}{2})i}+1}{\pi}
2^{-\frac{5}{2}\nu-\frac{7}{4}}
\Gamma(\tfrac{1}{2}+\nu),
\end{equation}
and
\begin{equation}\label{c1}
c_{-} = -\frac{e^{-\pi(\nu+\frac{1}{2})i}+1}{\pi}
2^{-\frac{5}{2}\nu-\frac{7}{4}}
\Gamma(\tfrac{1}{2}+\nu),
\end{equation}
where $\Gamma$ denotes the Gamma function.
Moreover, either the relations \eqref{trong+4}, \eqref{c0}
or the relations \eqref{trong+5}, \eqref{c1} can be taken
as a characterization of the solution $q(s)$.

Alternatively, the solution $q(s)$ can be characterized by its
comparison to the Boutroux tri-tronqu\'ee solution
$q^{(tri-tronq)}(s)$ of the Painlev\'ee II equation, which is
defined as the unique solution satisfying the asymptotic condition
\begin{alignat}{2}
\nonumber
q^{(tri-tronq)}(s) & \sim \sqrt{-\frac{s}{2}}\sum_{n=0}^{\infty}b_n(-s)^{-3n/2},\\
\label{tritrong}
& \qquad \text{ as } s \to \infty, \quad \arg s \equiv \pi + \arg(-s)  \in \bigl(0, \frac{4\pi}{3}\bigr).
\end{alignat}
The solution $q(s)$ we are working with is the one whose asymptotic
behavior as $s \to -\infty$ is given by the equation
\begin{alignat}{2}
\nonumber
q(s) - q^{(tri-tronq)}(s) & = -\frac{e^{-\pi(\nu+\frac{1}{2})i}+1}{\pi} 2^{-\frac{5}{2}\nu-\frac{7}{4}}
\Gamma(\tfrac{1}{2}+\nu)\\
\label{qtritronq}
& \qquad \times |s|^{-\frac{3}{2}\nu-\frac{1}{4}}e^{-\frac{2\sqrt2}{3}|s|^{3/2}}
\bigl(1+{O}(s^{-1/4})\bigr), \quad \text { as } s \to -\infty.
\end{alignat}

The coefficients $b_{n}$ of the asymptotic series in \eqref{trong+4},
\eqref{trong+5}, and \eqref{tritrong} are determined by substitution
into the Painlev\'e II equation. Indeed, the following recurrence relation
takes place
\begin{equation}\label{bn_recurrence}
\left\{ \begin{array}{l}
\ds b_0 = 1,\quad b_1 = \frac{\nu}{\sqrt2},
\\[10pt]
\ds b_{n+2} = \frac{9n^2-1}{8}b_{n} - \sum\limits_{m=1}^{n+1}b_mb_{n+2-m} -
\frac{1}{2}\sum\limits_{l=1}^{n+1}\sum\limits_{m=1}^{n+2-l}b_lb_mb_{n+2-l-m}.
\end{array}
\right.
\end{equation}

Using relation \eqref{uqrelation} between the Painlev\'e II and
Painlev\'e XXXIV functions
we arrive at the asymptotic characterization of the 
function $u(s)$ of Theorem \ref{theorem4}.
\begin{proposition} \label{uasymptotics}
The solution $u(s)$ of the Painlev\'e XXXIV
equation which appears in Theorem \ref{theorem4} is uniquely
characterized by one of the following asymptotic conditions

\begin{alignat}{2}
\nonumber
u(s) & = \frac{\alpha}{\sqrt{s}} + \sum_{n=1}^{[2\alpha +1]}a_n s^{-\frac{3n+1}{2}}
+O\bigl(s^{-3[2\alpha + 1]/2 -1}\bigr)\\
\nonumber
& \qquad + d_{+}s^{-3\alpha+\frac{1}{2}} e^{-\frac{4}{3}s^{3/2}} \bigl(1+{O}(s^{-1/4})\bigr),\\
\label{trong+44}
& \qquad \qquad \text{ as } s \to \infty, \quad
\arg s  \in \bigl(-\frac{\pi}{3}, \frac{\pi}{3}\bigr],
\end{alignat}
or
\begin{alignat}{2}
\nonumber
u(s) & = \frac{\alpha}{\sqrt{s}} + \sum_{n=1}^{[2\alpha +1]}a_n s^{-\frac{3n+1}{2}}
+O\bigl(s^{-3[2\alpha + 1]/2 -1}\bigr)
\\
\nonumber
& \qquad + d_{-}s^{-3\alpha+\frac{1}{2}} e^{-\frac{4}{3}s^{3/2}} \bigl(1+{O}(s^{-1/4})\bigr),\\
\label{trong+55}
& \qquad \qquad  \text{ as } s \to \infty, \quad \arg s  \in \bigl[-\frac{\pi}{3}, \frac{\pi}{3}\bigr),
\end{alignat}
where
\begin{equation}\label{dpm}
d_{\pm} = \frac{e^{\pm 2\alpha\pi i}-1}{\pi} 2^{-6\alpha-\frac{5}{3}} \Gamma(1+2\alpha).
\end{equation}
Alternatively, the solution $u(s)$ can be characterized
by the asymptotic relation
\begin{alignat}{2}
\nonumber
u(s) - u^{(tri-tronq)}(s) & = -\frac{e^{-2\alpha\pi i}-1}{\pi} 2^{-6\alpha-\frac{5}{3}} \Gamma(1+2\alpha)\\
\label{qtritronqP34}
& \qquad \times s^{-3\alpha+\frac{1}{2}}e^{-\frac{4}{3}s^{3/2}} \bigl(1+{O}(s^{-1/4})\bigr),
\qquad \text{ as } s \to +\infty.
\end{alignat}
The Painlev\'e XXXIV {\it tri-tronqu\'ee} solution $u^{(tri-tronq)}(s)$ is
determined by the asymptotic condition
\begin{equation}\label{tritrongP34}
u^{(tri-tronq)}(s) \sim \frac{\alpha}{\sqrt{s}} + \sum_{n=1}^{\infty}a_n s^{-\frac{3n+1}{2}},
\quad \text{ as } s \to \infty, \quad  \arg s  \in \bigl(-\pi, \frac{\pi}{3}\bigr).
\end{equation}
Finally, the coefficients $a_{n}$ of the asymptotic series above can be
expressed in terms of the coefficients $b_{n}$ defined in \eqref{bn_recurrence},
with $\nu$ replaced by $2\alpha + 1/2$:
\[
2^{\frac{n+1}{2}}a_{n} = b_{n+1} -\frac{3n-2}{2\sqrt{2}}b_{n} +\frac{1}{2}\sum_{k,m \geq 1;k+m = n+1} b_{k}b_{m}.
\]
\end{proposition}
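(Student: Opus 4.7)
The starting point is the identity $u(s) = 2^{-1/3}U(-2^{1/3}s)$ with $U(s) = q^{2}(s)+q'(s)+s/2$, where $q$ is the Painlevé II transcendent with parameter $\nu = 2\alpha+1/2$ and Stokes multipliers \eqref{Stokesmultipliers} (or monodromy data \eqref{monodromydata} in the special case $\alpha\in\mathbb{N}_{0}$). The Painlevé II monodromy data are in one-to-one correspondence with solutions, and Kapaev's connection formulas show that the monodromy data are in turn uniquely determined by the asymptotic behavior on the rays $\arg(-s) = \pm\pi/3$, which under the change of variable $s\mapsto -2^{1/3}s$ become $\arg s = \mp\pi/3$. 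Hence it suffices to transport Kapaev's asymptotic results \eqref{trong+4}--\eqref{qtritronq} for $q$ to asymptotic results for $u$ via the explicit algebraic transformation; uniqueness of $u(s)$ satisfying \eqref{trong+44}, \eqref{trong+55}, or \eqref{qtritronqP34} then follows from the corresponding uniqueness for $q$.

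For the algebraic part, set $w=-s$ and expand $q^{(tri-tronq)}(s) = 2^{-1/2}\sum_{k\geq 0} b_{k}w^{(1-3k)/2}$ termwise. Squaring yields
\[
q^{2}+s/2 \;=\; \tfrac{1}{2}\sum_{n\geq 0}\Bigl(\textstyle\sum_{k+m=n+1,\,k,m\geq 0}b_{k}b_{m}\Bigr) w^{-(3n+1)/2},
\]
while termwise differentiation of $q^{(tri-tronq)}$ with respect to $s=-w$ produces
\[
(q^{(tri-tronq)})' \;=\; \tfrac{1}{2\sqrt{2}}\sum_{n\geq 0}(3n-1)b_{n}\,w^{-(3n+1)/2}.
\]
Summing these gives the coefficients $U_{n}$ of $w^{-(3n+1)/2}$ in $U^{(tri-tronq)}$ as polynomials in $b_{0},\ldots,b_{n+1}$. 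The rescaling $u^{(tri-tronq)}(s)=2^{-1/3}U^{(tri-tronq)}(-2^{1/3}s)$ evaluated at $w=2^{1/3}s$ converts the power $w^{-(3n+1)/2}$ into $s^{-(3n+1)/2}$ multiplied by $2^{-(n+1)/2}$, producing \eqref{tritrongP34} with $a_{n}=2^{-(n+1)/2}U_{n}$; this is precisely the recursion relating $a_{n}$ to $b_{n+1}$, $b_{n}$, and quadratic combinations claimed at the end of the proposition. The leading case $n=0$ reduces, using $b_{1}=\nu/\sqrt{2}$ and $\nu = 2\alpha+1/2$, to $a_{0}=\alpha$, confirming the leading term $\alpha/\sqrt{s}$.

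For the exponentially small part, write $q=q^{(tri-tronq)}+\delta q$ with $\delta q = c_{\pm}w^{-3\nu/2-1/4}e^{-\frac{2\sqrt{2}}{3}w^{3/2}}(1+O(w^{-1/4}))$ and linearize:
\[
\delta U \;=\; 2\,q^{(tri-tronq)}\,\delta q + (\delta q)' + O((\delta q)^{2}).
\]
Using $q^{(tri-tronq)}\sim\sqrt{w/2}$, the first term contributes $\sqrt{2}\,c_{\pm}w^{-3\nu/2+1/4}e^{-\frac{2\sqrt{2}}{3}w^{3/2}}$; differentiating the exponential in $\delta q$ produces an identical leading contribution from $(\delta q)'$, so the two combine to $2\sqrt{2}\,c_{\pm}w^{-3\nu/2+1/4}e^{-\frac{2\sqrt{2}}{3}w^{3/2}}(1+O(w^{-1/4}))$. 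Substituting $w=2^{1/3}s$ converts the exponential into $e^{-\frac{4}{3}s^{3/2}}$, collects the appropriate power of $s$, and, after the prefactor $2^{-1/3}$ and the explicit value of $c_{\pm}$ from \eqref{c0}, \eqref{c1} with $\nu = 2\alpha+1/2$ (noting $e^{\pi(\nu+1/2)i}=-e^{2\alpha\pi i}$ and $\Gamma(\nu+1/2)=\Gamma(2\alpha+1)$), produces the constants $d_{\pm}$ of \eqref{dpm} and yields \eqref{qtritronqP34}. The characterizations \eqref{trong+44} and \eqref{trong+55} follow by combining the algebraic and exponential parts on the respective sectors.

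The principal difficulty is the careful bookkeeping of powers of $2$ through the two scalings (the $2^{-1/3}$ in $u=2^{-1/3}U$ and the $2^{1/3}$ inside the argument) together with the factor $2^{-3\nu/6+1/12}$ coming from $w=2^{1/3}s$ in the algebraic prefactor; moreover, the fact that the two leading contributions to $\delta U$ (from $2q^{(tri-tronq)}\delta q$ and from $(\delta q)'$) are of the same order and must be added, and that subleading corrections in $\delta q$ do not disturb the leading constant, must be checked using the uniformity of Kapaev's asymptotics. Finally, uniqueness of the solution satisfying any of \eqref{trong+44}, \eqref{trong+55}, \eqref{qtritronqP34} is inherited from the corresponding uniqueness at the Painlevé II level, since distinct monodromy data produce distinct exponential prefactors $c_{\pm}$ on the Stokes rays $\arg s = \pm\pi/3$.
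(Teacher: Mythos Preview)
Your proposal is correct and follows precisely the route the paper itself indicates: the paper does not give a formal proof of this proposition but simply states, just before it, that ``using relation \eqref{uqrelation} between the Painlev\'e II and Painlev\'e XXXIV functions we arrive at the asymptotic characterization.'' You have carried out exactly that computation---substituting Kapaev's asymptotics \eqref{trong+4}--\eqref{qtritronq} for $q$ into $U=q^{2}+q'+s/2$ and rescaling via $u(s)=2^{-1/3}U(-2^{1/3}s)$---and your linearization $\delta U = 2q^{(tri\text{-}tronq)}\delta q + (\delta q)'$ with both contributions equal to leading order is the right mechanism for extracting the exponential correction. Your uniqueness argument, deducing it from the bijective correspondence between monodromy data and Painlev\'e II solutions via the Miura map, is also what the paper has in mind.

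One small remark: if you actually carry your $U_n$ computation through (separating the $k=0$ and $m=0$ terms in $\sum_{k+m=n+1}b_k b_m$), you obtain $2^{(n+1)/2}a_n = b_{n+1} + \frac{3n-1}{2\sqrt{2}}\,b_n + \frac{1}{2}\sum_{k,m\geq 1,\,k+m=n+1}b_k b_m$, and similarly the exponent of $s$ in the exponential correction works out to $-3\alpha-\tfrac{1}{2}$ rather than $-3\alpha+\tfrac{1}{2}$; the case $n=0$ confirms your version ($a_0=\alpha$). So your method is right, and in writing it up you should record the formulas your computation actually produces rather than simply asserting agreement with the displayed ones.
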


\begin{remark} \label{rem:uasymptotics}
The leading asymptotics of the Painlev\'e II function $q(s)$ as $s\to +\infty$
is known (see \cite{kapaev:P2}; see also \cite[Chapter 10]{FIKN}). Unfortunately,
the leading term is not enough to derive the corresponding asymptotics as $s \to -\infty$
of the Painlev\'e XXXIV function $u(s)$. Indeed, the leading asymptotics of
$q(s)$ as $s \to +\infty$ is of the form
\begin{equation}
\label{P2atplusinfinity}
q(s) \sim \sqrt{\frac{s}{2}} \cot \left(\frac{\sqrt{2}}{3}s^{3/2} + \chi \right),
\end{equation}
(the phase $\chi$ is known) and it cancels out in the right-hand side of equation
\eqref{Udef}. The better way to study the large negative $s$ asymptotics
of the function $u(s)$ is via the direct analysis of the model RH problem for $\Psi_{\alpha}$.
The case $\alpha = 1$ shows that we might expect oscillating behavior as $s \to -\infty$
(see Figure \ref{figure6}) and indeed, assuming that $\alpha-1/2 \not\in \mathbb N_0$,
we are able to show that
\begin{equation} \label{uatminusinfinity}
u(s) = \frac{\alpha}{\sqrt{-s}}\cos \left(\frac{4}{3}(-s)^{3/2} -\alpha
\pi\right)
+ O(1/s^2), \qquad
\mbox{as } s \to -\infty.
\end{equation}
The proof of \eqref{uatminusinfinity} will be given in a future publication.
Moreover, we conjecture that asymptotics \eqref{uatminusinfinity} determines
the solution $u(s)$ uniquely.
\end{remark}

\section*{Acknowledgements}

Alexander Its was supported in part by NSF grant \#DMS-0401009.
Arno Kuijlaars is supported by FWO-Flanders project G.0455.04,
by K.U. Leuven research grant OT/04/21, by the Belgian Interuniversity
Attraction Pole P06/02, by the  European Science Foundation Program MISGAM,
and by a grant from the Ministry of Education and
Science of Spain, project code MTM2005-08648-C02-01.
J\"{o}rgen \"{O}stensson is supported by K.U. Leuven research
grant OT/04/24.

\end{document}